\documentclass[11pt,twoside]{amsart}

\usepackage{enumerate,color,xypic} 
\usepackage{setspace}
\usepackage[dvips,final]{graphics}

\usepackage[hyperindex=true,plainpages=false,colorlinks=false,pdfpagelabels]{hyperref}


\theoremstyle{plain}
\newtheorem*{theorem*}{Theorem}
\newtheorem{theorem}{Theorem}[section]
\newtheorem{lemma}[theorem]{Lemma}
\newtheorem{cor}[theorem]{Corollary}
\newtheorem{prop}[theorem]{Proposition}

\theoremstyle{definition}
\newtheorem{definition}[theorem]{Definition}

\theoremstyle{remark}
\newtheorem{rem}[theorem]{Remark}

\numberwithin{equation}{section}

\renewcommand{\Im}{{\rm Im}\,}

\newcommand{\R}{\mathbb{ R}}
\newcommand{\C}{\mathbb{ C}}
\newcommand{\Z}{\mathbb{ Z}}

\newcommand{\F}{\mathbb{K}}

\renewcommand{\H}{\mathbb{ H}}

\newcommand{\N}{\mathbb{ N}}
\renewcommand{\P}{\mathbb{ P}}
\newcommand{\HP}{\H\P}
\newcommand{\CP}{\C\P}

\newcommand{\dbar}{{\bar{\partial}}}

\DeclareMathOperator{\End}{End}
\DeclareMathOperator{\Hom}{Hom}

\DeclareMathOperator{\Span}{Span}

\DeclareMathOperator{\ev}{ev}

\DeclareMathOperator{\Spec}{Spec}

\DeclareMathOperator{\Ind}{Index}

\renewcommand{\i}{\mathbf{i}} 


\setlength{\parindent}{0 pt}
\setlength{\parskip}{5 pt}
\addtolength{\textwidth}{6 em}
\addtolength{\oddsidemargin}{-3 em}
\addtolength{\evensidemargin}{-3 em}
\addtolength{\topmargin}{-3 em}
\addtolength{\textheight}{5 em}

\begin{document}

\title[Discrete holomorphic geometry I]{Discrete holomorphic geometry I.\\
  Darboux transformations and spectral curves} \date{\today} \author{C.~Bohle}
\author{F.~Pedit} \author{U.~Pinkall}

\address{Christoph Bohle\\
  Institut f\"ur Mathematik\\
  Technische Universit\"at Berlin\\
  Strasse des 17.Juni 136\\
  10623 Berlin, Germany}

\address{Franz Pedit\\
  Mathematisches Institut der  Universit{\"a}t T\"ubingen\\
  Auf der Morgenstelle 10\\
  72076 T\"ubingen\\
  Germany\\
  and \\
  Department of Mathematics \\
  University of Massachusetts\\
  Amherst, MA 01003, USA}

\address{Ulrich Pinkall\\
  Institut f\"ur Mathematik\\
  Technische Universit\"at Berlin\\
  Strasse des 17.Juni 136\\
  10623 Berlin, Germany}

\email{bohle@math.tu-berlin.de,
  pedit@mathematik.uni-tuebingen.de,\newline  
  pinkall@math.tu-berlin.de}

\thanks{First and third author supported by DFG Forschergruppe 565
  ``Polyhedral Surfaces''. Second and third author supported by DFG Research
  Center ``Matheon''. All authors additionally supported by DFG
  Schwerpunktprogramm 1154 ``Global Differential Geometry''.}

\begin{abstract}
  Finding appropriate notions of discrete holomorphic maps and, more
  generally, conformal immersions of discrete Riemann surfaces into $3$--space
  is an important problem of discrete differential geometry and computer
  visualization.  We propose an approach to discrete conformality that is
  based on the concept of holomorphic line bundles over ``discrete surfaces'',
  by which we mean the vertex sets of triangulated surfaces with bi--colored
  set of faces.  The resulting theory of discrete conformality is
  simultaneously M\"obius invariant and based on linear equations.  In the
  special case of maps into the $2$--sphere we obtain a reinterpretation of
  the theory of complex holomorphic functions on discrete surfaces introduced
  by Dynnikov and Novikov.

  As an application of our theory we introduce a Darboux transformation for
  discrete surfaces in the conformal $4$--sphere. This Darboux transformation
  can be interpreted as the space-- and time--discrete Davey--Stewartson flow
  of Konopelchenko and Schief. For a generic map of a discrete torus with
  regular combinatorics, the space of all Darboux transforms has the structure
  of a compact Riemann surface, the spectral curve.  This makes contact to the
  theory of algebraically completely integrable systems and is the starting
  point of a soliton theory for triangulated tori in $3$-- and $4$--space
  devoid of special assumptions on the geometry of the surface.
  \end{abstract}

\maketitle

\section{Introduction}\label{sec:introduction}

The notions of discrete Riemann surfaces and discrete conformal maps are
important recurring themes in discrete geometry.  In computer graphics,
discrete conformal parameterizations and their approximations are used as
texture mappings and for the construction of geometric images.  In
mathematical physics, discrete Riemann surfaces occur in the discretization of
physical models such as conformal field theories and statistical mechanics
which involve smooth Riemann surfaces.  In surface geometry, the concept of
discrete conformality is fundamental in the description of discrete analogues
of special surface classes including minimal and, more generally, constant
mean or Gaussian curvature surfaces.

Whereas discrete models generally have smooth limits there are no consistent
procedures to ``discretize'' smooth models (which is reminiscent to the
problem of ``quantizing'' a classical theory).  There are currently several
approaches to discrete conformality for maps into the complex plane including
the M\"obius invariant approach modeled on circle packings or patterns which
goes back to Koebe and more recently Thurston, cf.~\cite{St}, M\"obius
invariant polygon evolutions driven by constant cross--ratio conditions
\cite{BP96,BP99,HMNP01} and linear approaches modelled on discretizations of
the Cauchy--Riemann equation, see e.g.\ Dynnikov and Novikov~\cite{DN97,DN03},
Mercat~\cite{Me}, Kenyon~\cite{Ke02}. For a comparison to the circle packing
approach and integrable system interpretation, see Bobenko, Mercat, and
Suris~\cite{BMS05}. Discrete conformal maps into space are often modeled on
discrete versions of conformal curvature lines and thus apply only to the
restricted class of isothermic surfaces, cf.~\cite{BP96,BP99,BHS06}.  The
conformality condition is again expressed by cross--ratio conditions, that is
by non--linear difference equations on the vertex positions.

The theory of discrete holomorphicity developed in the present paper provides
an approach to conformal geometry of discrete surfaces which applies equally
to maps into the plane and higher dimensional target spaces and is M\"obius
invariant, given by linear equations, and not restricted to special surface
classes. Our approach is based on the concept of holomorphic line bundles over
discrete surfaces.  The discrete conformal maps are then ratios of holomorphic
sections of such line bundles. From this point of view, the only difference
between planar conformal maps, i.e., holomorphic functions, and conformal maps
into $3$-- and $4$--space is whether the holomorphic line bundles in question
are complex or quaternionic.  The relation between the M\"obius geometry of
discrete surfaces and discrete holomorphic line bundles is provided by a
discrete version of the Kodaira correspondence.  A vantage point of this
approach to discrete holomorphic geometry is that the definition of
holomorphic line bundles does not require an a priori notion of discrete
Riemann surfaces about which we have nothing to say. This is indicated by the
smooth theory where any linear, first--order elliptic differential operator
between complex or quaternionic line bundles over an oriented $2$--dimensional
manifold $M$ defines a complex structure on $M$ and renders the line bundles
holomorphic.

The notion of holomorphic line bundles over discrete surfaces requires an
additional combinatorial structure which, in the context of discrete
conformality, was introduced by Dynnikov and Novikov \cite{DN97,DN03} and
appears also in Colin de Verdi\`ere~\cite{CdV98} and previously in unpublished
notes by Thurston: in the present paper a {\em discrete surface} $M$ is the
vertex set of a triangulated smooth surface with a bi--coloring of the faces
into black and white triangles, see Figure~\ref{fig:cliff}.
\begin{figure}[hbt]
  \centering
  \resizebox{6cm}{4.5cm}{\includegraphics{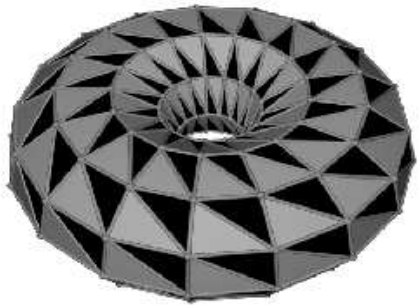}}
  \caption{A discrete torus immersed into 3--space.}
  \label{fig:cliff}
\end{figure}
  A complex or quaternionic {\em line bundle} $W$ over
such a discrete surface $M$ is then a family of $1$--dimensional complex or
quaternionic vector spaces $W_p$ parameterized by the vertices $p\in M$. A
holomorphic structure on a line bundle $W$ over a discrete surface is an
analogue of a linear, first--order elliptic differential operator acting on
the space of sections
\[
\Gamma(W)=\prod_{p\in M} W_p
\]
of the bundle $W$. A first order linear operator acts on the ``1--jet'' of a
section by which we mean the collection of its restrictions to the triangles.
Ellipticity is encoded by taking only half of the 1--jet, namely the
restrictions to black triangles.  If we had taken all triangles, we would have
defined a connection on the bundle $W$ which encodes both a holomorphic and an
anti--holomorphic structure.  Thus, a holomorphic structure is given by
assigning to every black triangle $b$ a 2--dimensional linear subspace
\begin{equation}\label{def:hol}
  U_b\subset \Gamma(W_{|b})= \prod_{p\in b} W_p 
\end{equation}
of local holomorphic sections.  A section
$\psi\in\Gamma(W)$ is {\em holomorphic} if $\psi_{|b}\in U_b$ for each black
triangle $b$ of $M$. The space of holomorphic sections is denoted by $H^0(W)$.

An important example of a holomorphic line bundle over a discrete surface $M$
is obtained by any map $f\colon M\to \F\P^1$, where $\F\P^1$ denotes the
Riemann sphere $S^2=\C\P^1$ or the conformal $4$--sphere $S^4=\H\P^1$.  If
$L\subset V$ is the pull--back by $f$ of the tautological bundle over
$\F\P^1$, that is $L_p=f(p)\subset \F^2$, where $V$ is the trivial
$\F^2$--bundle over $M$, then the line bundle $V/L$ has a unique holomorphic
structure such that the constant sections of $V$ project to holomorphic
sections of $V/L$: the linear space $U_b\subset \Gamma(V/L_{|b})$ prescribed
on a black triangle $b$ is the image of the canonical projection
\begin{equation}\label{eq:ind_hol}
  \pi \colon \F^2\rightarrow  \prod_{p\in b} \F^2/L_p =  \Gamma(V/L_{|b}).
\end{equation}
For $U_b$ to be $2$--dimensional we need that the restriction of $f$ to each
black triangle is non--constant.

The space of holomorphic sections of $V/L$ contains the $2$--dimensional
linear system $H\subset H^0(V/L)$ obtained by projection of all constant
sections of $V$.  As in the smooth theory, this linear system $H$ determines
the original map $f$ up to M\"obius transformations as a ratio of independent
holomorphic sections in $H$.  This last is an instance of the Kodaira
correspondence: given a holomorphic line bundle $W$ over a discrete surface
$M$ and a 2--dimensional linear system $H\subset H^0(W)$ without base points,
we obtain a map $f\colon M\to \P H$ whose value at a vertex $p\in M$ is the
line $L_p\subset H$ of sections $\psi\in H$ vanishing at $p$.  Thus, $W\cong
H/L$ with $H$ denoting the trivial $H$--bundle over $M$ and where the
holomorphic structure on $H/L$ is the one induced by the map $f$.

The notion of discrete holomorphicity proposed by Dynnikov and Novikov
\cite{DN97,DN03} for maps into $\C$ is equivalent to the complex case of our
theory.  Our point of view based on the Kodaira correspondence immediately
reveals the M\"obius invariance of discrete holomorphicity, a fact not readily
visible in \cite{DN97,DN03}.

Our exposition of discrete holomorphic geometry is very much in the spirit of
the smooth quaternionic holomorphic geometry developed in
\cite{PP98,BFLPP02,FLPP01,BLPP,BPP}. Several of its key concepts and formulae
carry over verbatim to the discrete setting. The approach based on holomorphic
line bundles and Kodaira correspondence emphasizes the similarity between the
complex and quaternionic case, that is between holomorphic maps into
$S^2=\CP^1$ and conformal surface theory in $S^4=\HP^1$.

As the main application of discrete holomorphic geometry given in this paper
we shed light on the relationship between discrete surface theory in $S^4$ and
integrable systems which is well--established in the smooth situation
\cite{Ko96,Ta97,Ko00,BPP01,Boh03}.  Central to our discussion of the
integrable system aspect of conformal surface theory is the notion of Darboux
transformation for discrete surfaces in the conformal 4--sphere $S^4$.
Iterating this Darboux transformation gives rise to a discrete flow and, if
the underlying surface is a discrete torus, the space of all Darboux
transforms is parameterized by a Riemann surface of finite genus, the spectral
curve of the discrete torus in $\F\P^1$.

Darboux transforms appear, just like in the smooth case, as prolongations of
holomorphic sections: let $f\colon M\to \F\P^1$ be a map whose restriction to
each black triangle is non--constant.  By \eqref{eq:ind_hol} any holomorphic
section $\psi\in H^0(V/L)$ of the induced holomorphic line bundle $V/L$ over
$M$ has a unique ``lift'' $\hat \psi\colon M'\to\F^2$ defined on the set $M'$
of black triangles such that $\pi\hat \psi_b=\psi_{|b}$.  Of course, $\hat
\psi$ is constant if $\psi\in H$ is contained in the linear system $H\subset
H^0(V/L)$ induced by $f$. On the other hand, if $\psi$ is a holomorphic
section with monodromy $h\in\Hom(\Gamma,\F_*)$, that is $\gamma^*\psi=\psi
h_{\gamma}$ for every deck transformation $\gamma\in\Gamma$, then $\hat \psi$
has the same monodromy $h$ and thus, generally, is not constant. Provided that
$\psi$ is not identically zero on any of the black triangles, we obtain a
well--defined map $f^\sharp\colon M'\to \F\P^1$ given by $f^\sharp(b)=\hat
\psi_b \F$ which we call a {\em Darboux transform} of $f$.
  
From this description we see that the Darboux transforms of a discrete surface
arise from solutions to a linear equation.  Alternatively, we can characterize
the Darboux transformation by a non--linear, M\"obius invariant zero curvature
relation which can be expressed as a multi--ratio condition.  This condition
already appeared in a number of instances, including the characterization of
integrable triangular lattices investigated by Bobenko, Hoffmann and Suris
\cite{BHS02} and in the space and time discrete versions of the
Kadomtsev--Petviashvili (KP) and Davey--Stewartson (DS) equation introduced by
Konopelchenko and Schief \cite{KS01,KS03}. From our point of view the former
correspond to 3--periodic sequences of Darboux transforms and the latter to a
sequence of iterated Darboux transforms.  This is consistent with the smooth
theory of conformal surfaces in $S^4$, where the Darboux transformation can be
seen as a time discretization of the Davey--Stewartson flow which approximates
the smooth flow~\cite{Boh03}.

To obtain completely integrable systems requires, like in the smooth case, the
discrete surface $M=T^2$ to be a $2$--torus. We show that the space of all
Darboux transforms $f^{\sharp}$ of a sufficiently generic discrete torus
$f\colon M \rightarrow S^4$ with regular combinatorics can be given the
structure of a compact Riemann surface, the \emph{spectral curve} $\Sigma$ of
$f$. This curve is M\"obius invariant and encodes the constants of motion of
the above mentioned discrete evolution equations. In the smooth case the
spectral curve plays an important role in the study of conformally immersed
tori in $3$-- and $4$--space \cite{Ta98, GS98,BLPP,BPP} and also in the
context of the Willmore problem \cite{S02}.

Since the spectral curve $\Sigma$ parameterizes Darboux transforms of the
discrete torus $f$ there is map
\[
F\colon M' \times \Sigma \rightarrow S^4
\]
assigning to $\sigma\in\Sigma$ the Darboux transform $F(-,\sigma)\colon M'\to
S^4$.  This map has a unique lift $\hat F\colon M' \times \Sigma \rightarrow
\CP^3$ that is holomorphic in $\Sigma$ and satisfies $F=\pi \circ \hat F$
where $\pi\colon \CP^3\rightarrow S^4$ denotes the twistor fibration. Thus, a
discrete torus in $S^4$, which is just a finite set of points with regular
combinatorics, gives rise to a family of algebraic curves parameterized over
the black triangles $b\in M'$. Since holomorphic curves in $\CP^3$ project to
super conformal Willmore surfaces in $S^4$, we also obtain a $M'$--family of
super conformal Willmore surfaces $ F(b,-)\colon \Sigma\to S^4$.

A fundamental property of the Darboux transformation which lies at the heart
of integrability is Bianchi permutability: given two Darboux transforms
$f^\sharp$ and $f^\flat$ of a discrete surface $f\colon M\to S^4$ with regular
combinatorics, there is a common Darboux transform $\hat f$ of $f^\sharp$ and
$f^\flat$.  Bianchi permutability implies that the spectral curve $\Sigma$ of
$f$ is preserved under the Darboux transformation.  Moreover, it can be used
to show that the family of algebraic curves $\hat F$ in $\CP^3$ ``linearizes''
in the Jacobian of $\Sigma$.

We conclude the paper by introducing a discrete flow on polygons in $S^4$ in
terms of a constant cross--ratio condition on the generated quadrilaterals.
For polygons in $S^2$, a reduction of the $S^4$ case, this cross--ratio
evolution was developed in \cite{BP96,BP99,HMNP01,Pi02}. Reducing this flow to
polygons in $\R^3$ gives the doubly discrete smoke ring flow \cite{Ho1,Ho2,
  PSW} (up to translation of the 3--plane).  By thinking of a polygon as a
discrete {\em thin cylinder} this flow in $S^4$ is given by iterated Darboux
transforms.  For closed polygons we thus have a spectral curve and the polygon
flow linearizes on its Jacobian. The corresponding discrete evolution
equations, $1+1$--reductions of the discrete Davey--Stewartson equation, are
the discrete Korteweg--de Vries (KdV) equation for polygons in $S^2$ and the
discrete non--linear Schr\"odinger (NLS) equation for $\R^3$.

\section{Holomorphic line bundles over discrete surfaces}

The approach to discrete conformality proposed in the present paper is based
on the concept of holomorphic line bundles over discrete surfaces.  The idea
behind this approach is that both the intrinsic and extrinsic conformal and
holomorphic geometry can be encoded in the language of holomorphic line bundles
and linear systems of holomorphic sections.  In the first part of the section
we recall the basic notions of the smooth theory of holomorphic line bundles
over Riemann surfaces, in the second part we develop the discrete
counterparts.  A crucial ingredient in the definition of holomorphic line
bundles over discrete surfaces is the combinatorial structure of a
triangulation with black and white colored faces.  The use of such additional
combinatorial data in the context of discrete holomorphicity appears
previously in Dynnikov and Novikov~\cite{DN97,DN03}, Colin de
Verdi\`ere~\cite{CdV98} and also in unpublished notes by Thurston.

\subsection{Smooth theory (complex version)}
A \emph{holomorphic structure} on a complex line bundle $W$ over a Riemann
surface $M$ is given by a so called $\dbar$--operator, a complex linear,
first--order differential operator
\[ \dbar\colon \Gamma(W)\rightarrow \Gamma(\bar KW) \]
satisfying the Leibniz rule 
\begin{equation}
  \label{eq:complex_leibniz}
  \dbar(\psi f) = \dbar(\psi) f + \psi \dbar(f)
\end{equation}
for all real and therefore, by complex linearity, for all complex functions
$f$.  Here, $\bar KW$ denotes the bundle of 1--forms with values in $W$ that
satisfy $*\omega=-i \omega$ with $*$ denoting the complex structure on $T^*M$.
A section $\psi\in \Gamma(W)$ is called \emph{holomorphic} if $\dbar \psi=0$
and the space of holomorphic sections is denoted by $H^0(W)$.  Holomorphic
sections are thus defined as solutions to a linear, first--order elliptic
partial differential equation.  The equation $\dbar \psi=0$ describing
holomorphicity can be seen as half of an equation $\nabla\psi=0$ describing
parallel sections, because every holomorphic structure can be complemented to
a connection $\nabla=\partial+\dbar$ satisfying
$\dbar=\nabla'':=\frac12(\nabla+i*\nabla)$ by choosing an anti--holomorphic
structure $\partial$.

What we mean by the statement that holomorphic line bundles encode the
intrinsic conformal geometry of Riemann surfaces is that the complex structure
on the surface~$M$ itself may be recovered from the first--order elliptic
operator $\dbar$:

\begin{lemma}\label{lem:complex_elliptic} 
  Let $A\colon \Gamma(W)\rightarrow \Gamma(\tilde W)$ be a complex linear,
  first--order elliptic differential operator between sections of complex line
  bundles $W$ and $\tilde W$ over a surface $M$.  Then there exists a unique
  complex structure on $M$ such that $\tilde W \cong \bar K W$ and $A$ becomes
  a $\dbar$--operator satisfying the Leibniz rule \eqref{eq:complex_leibniz}.
\end{lemma}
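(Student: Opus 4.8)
The plan is to recover the complex structure purely from the principal symbol of $A$. Since $A$ is first order, its principal symbol is a bundle map $\sigma\colon T^*M\to\Hom(W,\tilde W)$ characterized by $\sigma(df)=[A,f]=A(f\,\cdot)-f\,A(\cdot)$, a zeroth--order (multiplication) operator depending only on $df$. Complex linearity of $A$ forces $\sigma(\xi)$ to lie in $\Hom(W,\tilde W)$ as a complex linear map, so at each point $p$ the target $\Hom(W_p,\tilde W_p)$ is a $1$--dimensional complex, hence $2$--dimensional real, vector space. Ellipticity says $\sigma(\xi)$ is invertible for $\xi\ne 0$; as $\dim_\C W=\dim_\C\tilde W=1$ this just means $\sigma(\xi)\ne 0$, so $\sigma_p\colon T^*_pM\to\Hom(W_p,\tilde W_p)$ is an $\R$--linear isomorphism between two--dimensional real spaces.

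First I would use $\sigma_p$ to transport a complex structure. Define $*$ on $T^*_pM$ by $*:=\sigma_p^{-1}\circ(-i)\circ\sigma_p$, i.e. the unique $\R$--linear map with $\sigma(*\xi)=-i\,\sigma(\xi)$; then $*^2=-\Id$, and $*$ depends smoothly on $p$ because $\sigma$ does. This is an almost complex structure on $T^*M$, equivalently a conformal structure together with an orientation on $M$. On a surface every such structure is integrable, by the existence of isothermal coordinates (Korn--Lichtenstein), so it comes from a genuine complex structure on $M$, which is the one claimed in the lemma. The sign $-i$ (rather than $+i$) is dictated by the convention $*\omega=-i\omega$ defining $\bar KW$: it is exactly the relation satisfied by the symbol $\sigma_{\dbar}(\xi)\,\psi=\xi^{0,1}\otimes\psi$ of a $\dbar$--operator, since projection onto the $-i$--eigenspace $\bar K$ intertwines $*$ with multiplication by $-i$.

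Next I would produce the isomorphism $\tilde W\cong\bar KW$. Each real $\xi$ has a unique $(0,1)$--part $\omega=\xi^{0,1}\in\bar K_p$, and both $\xi\mapsto\xi^{0,1}$ and $\sigma_p$ are conjugate--linear out of $(T^*_pM,*)$; hence the composite $\lambda\colon\bar K_p\to\Hom(W_p,\tilde W_p)$, $\lambda(\xi^{0,1})=\sigma(\xi)$, is complex linear, and it is an isomorphism. Tensoring with $W$ gives a bundle isomorphism $\Phi\colon\bar KW\to\tilde W$, $\Phi(\xi^{0,1}\otimes\psi)=\sigma(\xi)\psi$, whence $\tilde W\cong\bar KW$. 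Transporting $A$ through $\Phi^{-1}$ yields an operator $\Gamma(W)\to\Gamma(\bar KW)$ whose symbol is, by construction, the $\dbar$--symbol $\psi\mapsto\xi^{0,1}\otimes\psi$. The commutator identity reads $A(f\psi)=f\,A\psi+\sigma(df)\psi$, which under $\Phi^{-1}$ becomes $A(f\psi)=f\,A\psi+(df)^{0,1}\otimes\psi$; this is precisely the Leibniz rule \eqref{eq:complex_leibniz}, so the transported $A$ is a $\dbar$--operator.

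Finally, uniqueness is forced by the same symbol computation run backwards: any complex structure for which $A$ becomes a $\dbar$--operator satisfying \eqref{eq:complex_leibniz} must, by the commutator identity, have symbol $\xi\mapsto\xi^{0,1}\otimes(\cdot)$, and this compatibility is equivalent to $\sigma(*\xi)=-i\,\sigma(\xi)$, which determines $*$ uniquely; the identification $\tilde W\cong\bar KW$ and the $\dbar$--structure are then determined as well. The only genuinely non--formal input is the integrability of $*$, i.e. the passage from a pointwise conformal structure to holomorphic coordinates; everything else is linear algebra of the symbol together with the elementary fact that two first--order operators with the same principal symbol differ by a zeroth--order term. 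I expect this integrability step, and the bookkeeping of which eigenspace ($K$ versus $\bar K$) the convention $*\omega=-i\omega$ singles out, to be the only places demanding care.
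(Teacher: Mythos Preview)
Your proof is correct and follows essentially the same approach as the paper: define $*$ on $T^*M$ as the unique structure making the symbol $\sigma(A)\colon T^*M\to\Hom(W,\tilde W)$ complex anti--linear, read off the isomorphism $\tilde W\cong\bar KW$, and deduce the Leibniz rule from the commutator identity $A(f\psi)-fA\psi=\sigma(df)\psi$. You are more explicit than the paper about integrability of the almost complex structure and about the construction of the isomorphism $\Phi$, but the underlying argument is the same.
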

\begin{proof}
  The complex structure on $M$ is given by the unique complex structure $*$ on
  the bundle $T^*M$ that makes the symbol $\sigma(A)\colon T^*M \rightarrow
  \Hom(W,\tilde W)$ a complex anti--linear operator which then induces an
  isomorphism $\tilde W\cong \bar K W$.  This definition of $*$ makes sense
  because, by ellipticity of the operator $A$, its symbol $\sigma(A)$ is an
  injective bundle morphism from the real rank~2 vector bundle $T^*M$ to the
  complex line bundle $\Hom(W,\tilde W)$.

  The Leibniz rule obviously holds for constant functions $f$. By definition
  of the symbol and the isomorphism $\tilde W \cong \bar KW$, for every point
  $p\in M$ the Leibniz rule holds for real functions $f$ vanishing at $p$.
  Therefore, it holds for all real functions and, by complex linearity, for
  all complex functions.
\end{proof}

In addition to the complex structure on $M$, a first--order elliptic operator
between line bundles $W$ and $\tilde W \cong \bar K W$ over $M$ defines a
complex holomorphic structure on $W$. This additional data is essential for
encoding the extrinsic geometry of holomorphic maps from the Riemann surface
$M$ into complex projective space $\CP^n$: given a Riemann surface $M$, there
is a 1--1--correspondence between
\begin{itemize}
\item projective equivalence classes of holomorphic curves $f\colon
  M\rightarrow \CP^n$ and
\item isomorphy classes of holomorphic line bundles $W$ on $M$ with
  $n+1$--dimensional linear system $H\subset H^0(W)$ of holomorphic sections
  without basepoints.
\end{itemize}
This correspondence between holomorphic curves and linear systems $H\subset
H^0(W)$ is called \emph{Kodaira correspondence}. Because this version of
Kodaira correspondence is of minor importance for the present paper, we skip
further details and refer the Reader to the literature on complex algebraic
geometry, e.g.~\cite{GH}.  Instead, in the following section we give a
detailed treatment of Kodaira correspondence for conformal immersions into
$S^4=\HP^1$.

\subsection{Smooth theory (quaternionic version)}
We briefly describe now the quaternionic versions of the concepts discussed in
the preceding section.  A detailed introduction to quaternionic holomorphic
line bundles over Riemann surfaces can be found in~\cite{FLPP01}.

Let $W$ be a quaternionic line bundle with complex structure $J\in
\Gamma(\End(W))$ over a Riemann surface $M$.  A \emph{holomorphic structure}
on $W$ is given by a quaternionic linear, first--order differential operator
\[ D \colon \Gamma(W)\rightarrow \Gamma(\bar KW) \]
satisfying the Leibniz rule 
\begin{equation}
  \label{eq:quaternionic_leibniz}
  D (\psi f) = D(\psi) f + (\psi df)'' \qquad \textrm{ with } \qquad (\psi
  df)'' = \frac12( \psi df + J \psi *df)
\end{equation}
for all real and hence quaternionic functions $f$.  A section $\psi\in
\Gamma(W)$ is called \emph{holomorphic} if $D\psi=0$. Thus, as in the complex
case, holomorphic sections are defined by a linear, first--order elliptic
partial differential equation. Moreover, the holomorphicity equation $D
\psi=0$ can again be seen as one half of a parallelity equation $\nabla\psi=0$
for a quaternionic connection $\nabla$ satisfying $D=\nabla''$. The
quaternionic analogue to Lemma~\ref{lem:complex_elliptic} is:

\begin{lemma}\label{lem:quat_elliptic} 
  Let $A \colon \Gamma(W)\rightarrow \Gamma(\tilde W)$ be a quaternionic
  linear, first--order elliptic operator between quaternionic line bundles $W$
  and $\tilde W$ over an (oriented) surface $M$.  Then there are unique
  complex structures on $M$, $W$ and $\tilde W$ such that $\tilde W\cong \bar
  K W$ and $A$ is a quaternionic holomorphic structure satisfying the Leibniz
  rule \eqref{eq:quaternionic_leibniz}.
\end{lemma}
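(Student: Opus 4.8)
The plan is to imitate the proof of Lemma~\ref{lem:complex_elliptic}, extracting every piece of the required data from the principal symbol $\sigma(A)_\alpha\colon W\to\tilde W$, $\alpha\in T^*M$. The essential new point is that, because $\Hom(W,\tilde W)$ is now a quaternionic line bundle of real rank~$4$ carrying no distinguished complex structure, we must produce the complex structures $J$ on $W$ and $\tilde J$ on $\tilde W$ together with the one on $M$.

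I would first work pointwise. By ellipticity $\sigma(A)_\alpha$ is a quaternionic isomorphism for each $0\neq\alpha\in T^*_pM$; identifying $\Hom(W_p,\tilde W_p)\cong\H$, where the invertible elements are exactly the nonzero ones, the image $P=\sigma(A)_p(T^*_pM)$ is therefore a genuine real $2$--plane. The crux is a piece of four--dimensional linear algebra: writing $\Lambda^2\H=\Lambda^+\oplus\Lambda^-$ and representing $P$ by a simple unit $2$--vector $\xi=\xi^++\xi^-$ (simplicity forces $|\xi^+|=|\xi^-|=\tfrac1{\sqrt2}$, in particular $\xi^+\neq0$), the post--compositions by unit elements $\tilde J\in\End(\tilde W_p)$ run through one of the two spheres of orthogonal complex structures, and $P$ is $\tilde J$--invariant precisely when the K\"ahler form of $\tilde J$ is parallel to $\xi^+$. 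Since $\xi^+\neq0$ this determines $\tilde J$ uniquely up to sign. I would then \emph{define} $*$ on $T^*_pM$ by $\sigma(A)_{*\alpha}=-\tilde J\,\sigma(A)_\alpha$ (legitimate because $\tilde J P=P$, and $*^2=-\Id$ since $\tilde J^2=-\Id$) and set $J:=\sigma(A)_\alpha^{-1}\,\tilde J\,\sigma(A)_\alpha\in\End(W_p)$. That $J$ is independent of $\alpha$ follows because any two nonzero elements of $P$ differ by a factor in the commutative field $\R\,\Id\oplus\R\,\tilde J$, which commutes with $\tilde J$; hence $J^2=-\Id$ and $\tilde J\,\sigma(A)_\alpha=\sigma(A)_\alpha\,J$ for all $\alpha$. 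All three structures depend smoothly on $\sigma(A)$, and the two choices $\pm\tilde J$ induce the two opposite orientations of $M$ through $*$, so the prescribed orientation pins down $\tilde J$, and with it $J$ and $*$, uniquely. This linear--algebra step is where I expect the real work to lie.

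Having $*$ and $J$, the bundle $\bar K W\subset T^*M\otimes W$ and the symbol $\sigma_D(\alpha)\psi=\tfrac12(\psi\otimes\alpha+J\psi\otimes *\alpha)$ of the model $\dbar$--type operator are defined, and $\sigma_D$ obeys the same two relations $\tilde J_{\bar K W}\sigma_D(\alpha)=\sigma_D(\alpha)J$ and $\sigma_D(*\alpha)=-\tilde J_{\bar K W}\sigma_D(\alpha)$ as $\sigma(A)$. Fixing $0\neq\alpha_0$ I would put $\Phi:=\sigma_D(\alpha_0)\circ\sigma(A)_{\alpha_0}^{-1}\colon\tilde W\to\bar K W$; the first relation shows $\Phi$ is complex linear, and then $\Phi\circ\sigma(A)_\alpha=\sigma_D(\alpha)$ holds for $\alpha\in\{\alpha_0,*\alpha_0\}$ and hence for all $\alpha$. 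Thus $\Phi$ is the isomorphism $\tilde W\cong\bar K W$ carrying the symbol of $A$ to $\sigma_D$, and it identifies $\tilde J$ with the complex structure of $\bar K W$.

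It remains, exactly as in Lemma~\ref{lem:complex_elliptic}, to verify the Leibniz rule \eqref{eq:quaternionic_leibniz} for $\tilde A:=\Phi\circ A$. For constant $f$ it is immediate from quaternionic linearity, since $(\psi\,df)''=0$. For a real $f$ with $f(p)=0$ the zeroth--order term drops out, so $\tilde A(\psi f)_p=\sigma_D(df_p)\psi_p=(\psi\,df)''_p$ by the construction of $\Phi$. Decomposing a general real $f$ as $f(p)+(f-f(p))$ and adding the two cases gives the rule at the arbitrary point $p$, and quaternionic linearity of $\tilde A$ extends it to all $\H$--valued $f$. Uniqueness of the whole structure has already been secured by the orientation bookkeeping in the pointwise step.
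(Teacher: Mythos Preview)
Your proof is correct and follows essentially the same route as the paper: extract $J$, $\tilde J$ and $*$ from the symbol, fix the sign ambiguity by the orientation, and then verify the Leibniz rule exactly as in Lemma~\ref{lem:complex_elliptic}. The paper simply \emph{asserts} that, up to sign, there exist unique $J$, $\tilde J$ with $\im\sigma(A)=\{B:\tilde J B=BJ\}$ and then fixes $*$ by $*\sigma(A)=\tilde J\sigma(A)=\sigma(A)J$; your use of the splitting $\Lambda^2\H=\Lambda^+\oplus\Lambda^-$ and the observation $|\xi^+|=|\xi^-|$ is a clean way of actually proving that assertion, and your explicit construction of $\Phi\colon\tilde W\to\bar K W$ via $\sigma_D(\alpha_0)\circ\sigma(A)_{\alpha_0}^{-1}$ spells out the isomorphism the paper leaves implicit. (Your sign convention $\sigma(A)_{*\alpha}=-\tilde J\,\sigma(A)_\alpha$ differs from the paper's $*\sigma(A)=\tilde J\sigma(A)$, but this is absorbed by the orientation bookkeeping you already do.)
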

\begin{proof}
  The symbol $\sigma(A)\colon T^*M \rightarrow \Hom(W,\tilde W)$ is a
  injective bundle morphisms from the real rank~2 vector bundle $T^*M$ to the
  real rank~4 bundle $\Hom(W,\tilde W)$. Up to sign, there are unique complex
  structures $J$ and $\tilde J$ on $W$ and $\tilde W$ such that the image of
  $\sigma(A)$ is the rank~2 bundle of elements $B\in \Hom(W,\tilde W)$
  satisfying $\tilde JB=BJ$. Moreover, there is a unique complex structure $*$
  on $T^*M$ compatible with the orientation and a unique choice of $J$ and
  $\tilde J$ such that $\sigma(A)\colon T^*M \rightarrow \Hom(W,\tilde W)$
  satisfies $*\sigma(A)=\tilde J \sigma(A)= \sigma(A) J$.

  As in the proof of Lemma~\ref{lem:complex_elliptic} one can check that $A$
  as an operator from $W$ to $\tilde W\cong \bar K W$ satisfies the Leibniz
  rule \eqref{eq:quaternionic_leibniz}.
\end{proof}

An important application of quaternionic holomorphic line bundles over Riemann
surfaces is the Kodaira correspondence for conformal immersions into the
4--sphere, a 1--1--correspondence between
  \begin{itemize}
  \item M\"obius equivalence classes of immersions $f\colon M\rightarrow
    S^4=\HP^1$ of a smooth surface $M$ into the conformal 4--sphere and
  \item isomorphy classes of quaternionic holomorphic line bundles $W$ over
    $M$ with a $2$--dimensional linear system $H\subset H^0(W)$ without
    Weierstrass points (see below).
\end{itemize}
In order to describe this 1--1--correspondence we identify maps $f\colon
M\rightarrow S^4$ from $M$ into the 4--sphere with line subbundles
$L\subset V$ of the trivial quaternionic $\H^2$--bundle $V$ over~$M$. The
quaternionic holomorphic line bundle corresponding to an immersion is then the
quotient bundle $W=V/L$ equipped with the unique holomorphic structure such
that all projections to $V/L$ of constant sections of $V$ are holomorphic. The
2--dimensional linear system $H\subset H^0(W)$ obtained by projecting all
constant sections has no Weierstrass points in the following sense: for every
$p\in M$, the space of sections $\psi\in H$ that vanish at $p$ is
1--dimensional and the vanishing is of first order (the latter because $f$ is
immersed).  Let
\[ L=
\begin{pmatrix}
  f \\ 1 
\end{pmatrix}\H,
\]
where $f\colon M\rightarrow \H$ is the representation of the immersion in an
affine chart. Then, the holomorphic sections $\psi$ and $\varphi$ obtained by
projecting the first and second basis vector of~$\H^2$ to the quotient line
bundle $V/L\cong \H^2/L$ satisfy
\[ \varphi = - \psi f. \] The affine representation $f$ of the immersion is
thus the quotient of two holomorphic sections in the linear system $H$ and
changing the basis $\psi$, $\varphi$ of $H$ amounts to changing the affine
representation of the immersion by a fractional linear transformations.

Conversely, given a holomorphic line bundle $W$ over $M$ together with a
2--dimensional linear system $H\subset H^0(W)$ that has no Weierstrass points,
the line bundle $L$ defined by $L_p=\ker(\ev_p)$ with $\ev_p\colon H
\rightarrow W_p$ denoting the evaluation at $p$ is a conformal immersion
$f\colon M\rightarrow S^4=\HP^1\cong\P(H)$.  Affine representations of this
immersion $f$ are obtained by taking quotients of holomorphic sections $\psi$,
$\varphi\in H$.

\subsection{Complex and quaternionic holomorphic line bundles over discrete
  surfaces}\label{sec:discrete_hol}
The basis of the discrete holomorphic geometry developed in this paper is the
concept of holomorphic line bundles over discrete surfaces. Our definition of
holomorphic line bundles assumes that the discrete surface is equipped with
the additional combinatorial structure of a bi--colored triangulation.  The
idea to use such combinatorial data in the context of discrete holomorphicity
appears previously in Dynnikov and Novikov~\cite{DN97,DN03} and Colin de
Verdi\`ere~\cite{CdV98}.

\begin{definition}
  A \emph{discrete surface} $M$ is the vertex set $\mathcal{V}$ of a
  triangulation $(\mathcal{V},\mathcal{E},\mathcal{F})$ of an oriented smooth
  surface whose set of faces $\mathcal{F}$ is equipped with a bi--coloring,
  that is, the faces $\mathcal{F}$ of the triangulation are decomposed
  $\mathcal{F}=\mathcal{B} \, \dot \cup \, \mathcal{W}$ into ''black'' and
  ''white'' triangles such that two triangles of the same color never share an
  edge in $\mathcal{E}$.
\end{definition}
The existence of such bi--coloring is equivalent to the property that every
closed ``thick path'' of triangles in~$M$ (i.e., every closed path in the dual
cellular decomposition $M^*$) has even length.  By triangulation we mean a
regular cellular decomposition all of whose faces are triangles, where regular
means that the glueing map on the boundary of each $1$-- and $2$--cell is
injective.  In other words, there are no identifications among the three
vertices and edges of a triangle, an assumption that will be necessary in the
definition of holomorphic line bundles below.  Note that we do not assume
triangulations to be simplicial (``strongly regular'') cellular
decompositions.

A discrete surface $M$ is \emph{compact} if its set of vertices if finite or,
equivalently, if the underlying smooth surface is compact. We call a discrete
surface \emph{connected} or \emph{simply connected} if the underlying smooth
surface is connected or simply connected. Similarly, by \emph{(universal)
  covering} of a discrete surface $M$ we mean the vertex set of the
triangulation induced on a covering (the universal covering) of the underlying
smooth surface.

Before we define holomorphic structures on complex or quaternionic line
bundles over discrete surfaces, recall that in the smooth case holomorphic
structures are given by linear, first--order elliptic differential operators
whose kernels describe the space of holomorphic sections.  The discrete
analogue to linear, first--order differential equations being difference
equation defined on the faces of the triangulation, it is natural to define
discrete holomorphic structures by imposing linear equations on the
restrictions of sections to the faces.  Reflecting the fact that the elliptic
operators defining holomorphic structures in the smooth case are the ``half''
of a connection, i.e., can be written as $\bar K$--part $\nabla''$ of a
connection~$\nabla$, it is natural to define holomorphic structure on line
bundles over discrete surfaces by imposing linear equations on half of the
faces only.

\begin{definition}\label{def:disc_hol}
  Let $W$ be a (complex or quaternionic) line bundle over a discrete
  surface~$M$, that is, over the vertex set of a triangulation of an oriented
  surface with black and white colored faces.  A \emph{holomorphic structure}
  on $W$ is given by assigning to each black triangle $b =\{u,v,w\}\in
  \mathcal{B}$ a $2$-dimensional space of sections $U_b \subset \Gamma
  (W_{|b})$ with the property that a section $\psi \in U_b$ vanishing at two
  of the vertices of $b$ has to vanish at all three of them.  A section $\psi
  \in \Gamma (W)$ is called \emph{holomorphic} if $\psi_{|b} \in U_b$ for
  every black triangle $b\in \mathcal{B}$.  The space of holomorphic sections
  of~$W$ is denoted by $H^0(W)$.
\end{definition}

Similar to the smooth case, there are essentially two different links between
the geometry of immersions of discrete surfaces into 4--space and discrete
quaternionic holomorphic geometry: the M\"obius geometric one via Kodaira
correspondence (introduced in the following section) and a Euclidean one based
on the concept of Weierstrass representation (to be discussed in the
forthcoming paper~\cite{BP}).

\subsection{Kodaira correspondence for immersions of discrete surfaces into
  $S^4$}\label{sec:kodaira}
As in the smooth case, there is a 1--1--correspondence between M\"obius
equivalence classes of immersions $f\colon M \rightarrow S^4= \HP^1$ of a
discrete surface $M$ and certain 2--dimensional linear systems of sections of
a quaternionic holomorphic line bundle over $M$.  Replacing quaternions by
complex numbers yields the analogous correspondence between maps into $\CP^1$
and linear systems of holomorphic sections of complex line bundles.

\begin{definition}
  A map $f\colon M \rightarrow S^4= \HP^1$ from a discrete surface $M$ into
  the 4--sphere~$S^4$ is called an \emph{immersion} if and only if $f_p\neq
  f_q$ for adjacent points $p$, $q\in M$.
\end{definition}

In the following, we identify maps from a discrete surface $M$ into
$S^4$ with the corresponding line subbundles $L\subset V$ of a trivial
$\H^2$--bundle $V$ over $M$.  An immersions is thus a line subbundle $L\subset
V$ with the property that $L_p\neq L_q$ for adjacent points $p$, $q\in M$.

The concept of holomorphic structures on line bundles over discrete surfaces
allows to formulate the \emph{Kodaira correspondence} relating the M\"obius
geometry of immersions to quaternionic holomorphic geometry in close analogy
to the smooth case. Kodaira correspondence is a 1--1--correspondence between
  \begin{itemize}
  \item M\"obius equivalence classes of immersions $f\colon M\rightarrow
    S^4=\HP^1$ of a discrete surface $M$ and
  \item isomorphy classes of quaternionic holomorphic line bundles $W$ over
    $M$ equipped with a $2$--dimensional linear system $H\subset H^0(W)$
    satisfying \eqref{eq:immersed}.
\end{itemize}
The holomorphic line bundle corresponding to an immersion $L\subset V$ of a
discrete surface is obtained by exactly the same construction as in the smooth
case: it is the bundle $W=V/L$ equipped with the unique holomorphic structure
such that constant sections $\hat \psi\in \Gamma(V)$ project to holomorphic
sections $\psi= \pi \hat\psi$ of $W=V/L$. This holomorphic structure is given
by assigning to each black triangle $b=\{u,v,w\}\in \mathcal{B}$ the space of
sections
\[ U_b\subset \Gamma(V/L_{|b}) \cong V/L_u \oplus V/L_v \oplus V/L_w \]
obtained by projection of constant sections $\hat \psi\in \Gamma(V)$.  The
2--dimensional linear system $H\subset H^0(W)$ corresponding to the immersion
$L$ is the space of sections of $W=V/L$ obtained by projection of constant
sections of $V$.  The property of $H\subset H^0(W)$ reflecting the fact that
$f$ is an immersion is
\begin{equation}
  \label{eq:immersed}
  \textrm{if } \psi\in H\subset H^0(W) \textrm{ satisfies } \psi_p=\psi_q=0
  \textrm{
    for adjacent  } p,q \in M  \textrm{ then } \psi =0.
\end{equation}
This condition is the discrete analogue to the property that the
2--dimensional linear system $H\subset H^0(W)$ has no Weierstrass points,
i.e., that sections vanishing to second order at one point have to vanish
identically.

Conversely, a 2--dimensional linear system $H\subset H^0(W)$ of a quaternionic
holomorphic line bundle $W$ over a discrete surface $M$ that satisfies
\eqref{eq:immersed} gives rise to an immersion $L\subset V$ of $M$ into
$\HP^1\cong \P(H)$, where $V$ denotes the trivial $H$--bundle $M\times H$ over
$M$ and $L_p =\ker(\ev_p)$ with $\ev_p\colon H\rightarrow W_p$ the evaluation
of a section at $p\in M$.

In coordinates Kodaira correspondence is written by the same explicit formulae
as in the smooth case (cf.\ \cite{FLPP01,BFLPP02}): every 2--dimensional
linear system $H\subset H^0(W)$ satisfying \eqref{eq:immersed} has a basis of
sections $\psi$, $\varphi\in H$ with $\psi$ nowhere vanishing such that there
is a function $f\colon M\rightarrow \H$ satisfying $\varphi = -\psi f$.  With
respect to such a basis $\psi, \varphi$ the immersion into $\HP^1\cong\P(H)$
obtained from the linear system $H$ via Kodaira correspondence is
\[ L =
\begin{pmatrix}
  f \\ 1
\end{pmatrix}\H. \]
Changing the basis of $H$ amounts to changing $f$ by a fractional linear
transform and therefore corresponds to a M\"obius transform of $S^4$.

In Sections~\ref{sec:darboux} and \ref{sec:spectral} we show how the
holomorphic structure on $V/L$ related to an immersion of a discrete surface
via Kodaira correspondence naturally leads to the concept of Darboux
transformation and spectral curve for immersions of discrete surfaces and
tori.

\subsection{Three generations of cellular decompositions with bi--colored
  faces}\label{sec:three_gen}
Despite the far reaching analogies between discrete and smooth holomorphic
geometry, the discrete theory has an interesting additional aspect which is
not visible in the smooth theory because it vanishes when passing to the
continuum limit: given a discrete surface $M$ or, more generally, the vertex
set $\mathcal{V}$ of a regular cellular decomposition
$(\mathcal{V},\mathcal{E},\mathcal{F})$ of a smooth surface equipped with a
bi--coloring $\mathcal{F}=\mathcal{B}\, \dot \cup\, \mathcal{W}$ of its faces
(with regular meaning that all glueing maps are injective), there are two
other regular cellular decompositions $M'$ and $M''$ of the same underlying
smooth surface which are again equipped with bi--colorings of their faces.

The cellular decomposition $M'$ is obtained by taking $\mathcal{B}$ as the set
of vertices, $\mathcal{W}$ as the set of ``black'' faces, and $\mathcal{V}$ as
the set of ``white'' faces with the following combinatorics: the vertices of a
face $w\in \mathcal{W}$ of $M'$ are the elements $b\in \mathcal{B}$ that touch
$w$ (wrt.~$M$) and two such vertices $b_1$, $b_2$ a connected by an edge of
the face $w$ of $M'$ if the share a vertex $v\in \mathcal{V}$ (wrt.~$M$).
Similarly, the vertices of a face $v\in \mathcal{V}$ of $M'$ are the elements
$b\in \mathcal{B}$ that contain $v$ (wrt.~$M$) and two such vertices $b_1$,
$b_2$ are connected by an edge of $M'$ if they touch a common white faces
$w\in \mathcal{W}$ (wrt.~$M$).

Applying the same construction to the decomposition $M'$ leads to the third
cellular decomposition $M''$.  Thus, by cyclically permuting the roles of
$\mathcal{V}$, $\mathcal{B}$ and $\mathcal{W}$ we obtain three generations
$M$, $M'$ and $M''$ of cellular decomposition of the same surface.  The
sequence of cellular decompositions obtained by successively applying the
above construction is actually three--periodic, because by applying the
construction to $M''$ one gets back to the initial cellular decomposition~$M$:
in order to see this, we introduce yet another cellular decomposition of the
surface, the triangulation whose set of vertices consists of $\mathcal{V}$,
$\mathcal{B}$ and $\mathcal{W}$ and whose edges correspond to all possible
incidences of the cellular decomposition $M$, namely
\begin{itemize}
\item the vertex $v\in \mathcal{V}$ is contained in $b\in \mathcal{B}$ or
  $w\in \mathcal{W}$ (wrt.~$M$) and
\item the faces $b\in \mathcal{B}$ and $w\in \mathcal{W}$ touch along an edge
  (wrt.~$M$).
\end{itemize} 
This triangulation has a tri--coloring of its vertices and is the stellar
subdivision of the cellular composition $M^*$ dual to $M$: the dual cellular
decomposition $M^*$ inherits a bi--coloring of its vertices from the black
and white coloring of $M$'s faces; the third color corresponds to the
additional vertices of the stellar subdivision.

Conversely, every triangulation of a surface with tri--colored set of vertices
gives rise to a cellular decomposition with black and white coloring of its
faces (see Figure~\ref{fig:bw_in_3colored}): one color takes the role of the
vertices, the other two become faces. The edges of the triangulation
correspond to the different types of incidences, i.e., that of a vertex lying
on a face and that of two faces intersecting along an edge.
\begin{figure}[hbt]
  \centering
  \resizebox{9cm}{4.5cm}{\includegraphics{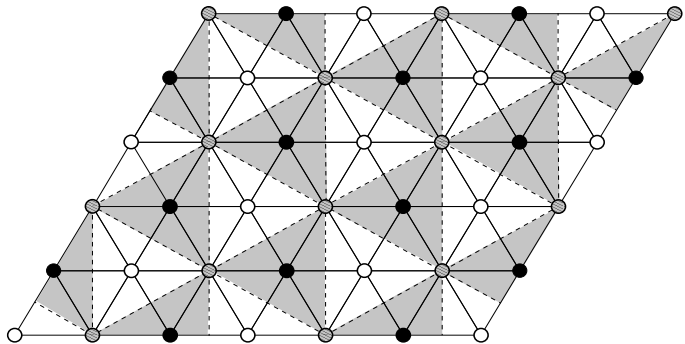}}
  \caption{A triangulation with black and white colored vertices obtained from
    a regular tri--colored triangulation.}
  \label{fig:bw_in_3colored}
\end{figure}

By cyclically permuting the role of the three colors we obtain the sequence
$M$, $M'$ and $M''$ of cellular decompositions with black and white colored
faces which is therefore three--periodic under the above construction, that is
$M=M'''$.

Although the present paper focuses on the case that $M$ is a discrete surface
as defined in Section~\ref{sec:discrete_hol}, i.e., the vertex set of a
triangulation with bi--colored set of faces, discrete holomorphicity can be
defined in the context of more general bi--colored cellular decompositions: in
\cite{BP} we introduce a Weierstrass representation for immersions $f\colon M
\rightarrow \R^4=\H$ defined on the vertex set $M=\mathcal{V}$ of an arbitrary
cellular decomposition with bi--colored faces using ``paired'' holomorphic
line bundles over the faces.  This representation is the direct analogue to
the Weierstrass representation \cite{PP98} for smooth surfaces in $\R^4$.

The assumption that the black faces $\mathcal{B}$ are triangles is necessary
for the Kodaira correspondence between immersions of discrete surfaces and
linear systems of holomorphic line bundles, see Section~\ref{sec:kodaira}.  It
is equivalent to the property that, in the above triangulation with
tri--colored vertices, all black vertices have valence six.

That the white faces $\mathcal{W}$ are triangles is equivalent to six--valence
of the white vertices in the tri--colored triangulation and will be necessary
for the multi ratio characterization \eqref{eq:multiratio_DT} of the Darboux
transformation is Section~\ref{sec:darboux}.

When dealing with iterated Darboux transforms (as in
Sections~\ref{sec:interate_and_KP} and~\ref{sec:bianchi}) we have to assume
that, in addition to $M$, also the cellular decompositions $M'$ and $M''$ have
the property that all their ``black'' faces are triangles or, equivalently,
that all vertices in the above tri--colored triangulation have valence six.
Since $M$ is a triangulation this is equivalent to the six--valence of all
vertices of $M$. This motivates the following definition.

\begin{definition}
  A discrete surface $M$ is said to be of \emph{regular combinatorics} if
  every vertex in the underlying triangulation has valence six.
\end{definition}

If $M$ is a discrete surface with regular combinatorics, the cellular
decompositions $M'$ and $M''$ are also bi--colored triangulations with regular
combinatorics.

Every discrete surface with regular combinatorics is equivalent to the
equilateral triangulation of the plane or a quotient thereof.  In particular,
corresponding to the three edges of triangles in the regular triangulation of
the plane, a discrete surface with regular combinatorics has three
distinguished \emph{directions}. Moreover, for discrete surfaces with regular
combinatorics it makes sense to define a \emph{row} of vertices, black
triangles or white triangles as the vertices that, with respect to the
equilateral triangulation of the plane, are contained on a straight line, or
as the triangles touching a straight line, respectively.

The Euler--formula shows that the only compact discrete surfaces with regular
combinatorics are discrete tori obtained as quotient of the regular
triangulation of the plane by some lattice $\Gamma\cong \Z^2$.  The asymptotic
analysis of spectra of quaternionic holomorphic line bundles over discrete
tori in Section~\ref{sec:spectral} will be considerably simplified by the
assumption that the discrete torus has regular combinatorics.

\section{The Darboux transformation}\label{sec:darboux}

We introduce a Darboux transformation for immersions of discrete surfaces
into~$S^4$.  Like the Darboux transformation for immersions of smooth surfaces
is a time--discrete version of the Davey--Stewartson flow, see \cite{Boh03},
one expects the Darboux transformation of discrete surfaces to be a space--
and time--discrete version of the Davey--Stewartson flow.  This is indeed the
case: for immersed discrete surfaces with regular combinatorics, the Darboux
transformation can be interpreted as the space-- and time--discrete
Davey--Stewartson flow introduced by Konopelchenko and Schief \cite{KS03}. The
case of immersions into $S^2=\CP^1$ is a special reduction of the theory
corresponding to the double discrete KP equation \cite{KS01}.

\subsection{Definition of the Darboux transformation}
The following considerations show how discrete holomorphic geometry, in
particular the Kodaira correspondence for immersions of discrete surfaces,
naturally leads to the Darboux transformation for immersions of discrete
surfaces.

Let $f\colon M \rightarrow S^4=\HP^1$ be an immersion of a simply connected
discrete surface $M$ and denote by $L\subset V$ the corresponding line
subbundle of the trivial $\H^2$--bundle $V$ over $M$.  By definition of the
holomorphic structure on the quaternionic line bundle $V/L$ related to the
immersion via Kodaira correspondence (see Section~\ref{sec:kodaira}), the
restriction $\psi|_{b}$ of a holomorphic section $\psi \in H^0(V/L)$ to the
vertices $v_1$, ..., $v_3$ of a black triangle $b\in \mathcal{B}$ is obtained
by projecting a vector $\hat \psi_b \in \H^2$ to the fibers $(V/L)|_{v_j}$.
Because $\hat \psi_b$ is uniquely determined by $\psi$, every holomorphic
section $\psi \in H^0(V/L)$ gives rise to a section $\hat\psi$ of the trivial
$\H^2$--bundle over the set $\mathcal{B}$ of black triangles.

\begin{definition}
  Let $f\colon M\rightarrow S^4$ be an immersed discrete surface and
  $V/L$ the corresponding quaternionic holomorphic line bundle. A section
  $\hat \psi$ of the trivial $\H^2$--bundle over $\mathcal{B}$ is the
  \emph{prolongation} of a holomorphic section $\psi\in H^0(V/L)$ if and only
  if \[\psi_v=\pi \hat\psi_b\] for every $b\in \mathcal{B}$ and $v\in
  \mathcal{V}$ contained in $b$, where $\pi$ denotes the canonical projection
  to~$V/L$.
\end{definition}

The prolongation $\hat \psi$ of a holomorphic section $\psi$ is constant if
$\psi$ is contained in the 2--dimensional linear system $H\subset H^0(V/L)$
related to $f$ by Kodaira correspondence.  For a holomorphic section $\psi\in
H^0(V/L)$ not contained in $H$, the prolongation $\hat \psi$ is non--constant
and, provided $\psi$ does not vanish identically on any of the black
triangles, $L^\sharp=\hat \psi\H$ defines a map from the set $\mathcal{B}$ of
black triangles into $S^4$.  Exactly as in the smooth case~\cite{BLPP},
Darboux transforms of $L$ will be the maps $L^\sharp$ locally given by
$L^\sharp=\hat \psi\H$ for $\hat \psi$ the prolongation of a nowhere vanishing
local holomorphic section $\psi$ of $V/L$. The non--vanishing of $\psi$ is
reflected in the condition
\begin{equation}
  \label{eq:regular_DT}
  L^\sharp_b\neq L_{v_i} \textrm{ for every black triangle } b=\{v_1,v_2,v_3\}
  \textrm{ and } i=1,...,3.
\end{equation}

We derive now a zero curvature condition characterizing the line subbundles
$L^\sharp$ that locally can be obtained by prolongation from nowhere vanishing
holomorphic sections among all line subbundles with \eqref{eq:regular_DT} of
the trivial $\H^2$--bundle over the black triangles~$\mathcal{B}$. For this we
will in the following consider $\mathcal{B}$ as the vertex set of the cellular
decomposition $M'$ of the underlying surface (see
Section~\ref{sec:three_gen}).  Let $L^\sharp$ be a line subbundle of the
trivial $\H^2$--bundle over the set of vertices $\mathcal{B}$ of $M'$.  If the
bundle $L^\sharp$ satisfies \eqref{eq:regular_DT} it carries a connection
defined by
\begin{equation}\label{eq:connection_DT}
  P_{b_2b_1}\colon L^\sharp_{b_1} \overset{\pi}{\longrightarrow} (V/L)_v
  \overset{\pi^{-1}}{\longrightarrow} L^\sharp_{b_2},
\end{equation}
where $b_1$, $b_2\in \mathcal{B}$ are two black triangles connected by an edge
of $M'$ (i.e., touching a common white triangle), where $v\in \mathcal{V}$ is
the vertex contained in $b_1$ and $b_2$, and where $\pi\colon
L^\sharp_{b_i}\rightarrow (V/L)_v$ are the projections to the quotient bundle.

By definition, a (local) section $\varphi \in \Gamma(L^\sharp)$ is the
prolongation of a holomorphic section of $V/L$ if and only if $\varphi_{b_1}
\equiv \varphi_{b_2} \textrm{ mod } L_v$ for all black triangles $b_1$,
$b_2\in \mathcal{B}$ sharing a vertex $v\in \mathcal{V}$.  This is equivalent to
$\varphi$ being parallel with respect to the connection
\eqref{eq:connection_DT} on the bundle~$L^\sharp$.
The local existence of a non--trivial parallel section $\varphi \in
\Gamma(L^\sharp)$ is equivalent to flatness of the connection
\eqref{eq:connection_DT} on $L^\sharp$.

The curvature of the connection \eqref{eq:connection_DT} on a bundle $L^\sharp$
over $M'$ that satisfies \eqref{eq:regular_DT} is the 2--form assigning to
each face of the cellular decomposition $M'$ the holonomy around that face.
By definition of the connection \eqref{eq:connection_DT} the holonomy is
automatically trivial around the faces of $M'$ that correspond to the vertices
of the original triangulation $M$.  Thus, the only condition for the
connection to be flat is that its holonomy is trivial when going around the
faces of $M'$ that correspond to the white triangles of $M$.  We show now how
to express the triviality of these holonomies as a multi ratio condition. For
this we use the standard embedding $\H\subset \HP^1$ and we write $L$ and
$L^\sharp$ as
\[
L_v=
\begin{pmatrix}
  x_v \\ 1
\end{pmatrix}\H \qquad \textrm{ and } \qquad
L^\sharp_b=
\begin{pmatrix}
  x_b \\ 1
\end{pmatrix}\H
\]
with $x_v$, $x_b\in \H$. The connection \eqref{eq:connection_DT} then becomes
\[  P_{b_jb_i} \begin{pmatrix}
  x_{b_i} \\ 1 \end{pmatrix}= \begin{pmatrix} x_{b_j} \\ 1
\end{pmatrix}(x_{b_j}-x_v)^{-1}(x_{b_i}-x_v)
\]
and, denoting the vertices and black triangles as in
Figure~\ref{fig:combi_DT}, the holonomy around a white triangle (with
clockwise orientation) is
\begin{multline*}
  P_{13,12}P_{12,23}P_{23,13}\begin{pmatrix}
  x_{13} \\ 1
\end{pmatrix}= \begin{pmatrix} x_{13} \\ 1
\end{pmatrix}(x_{13}-x_1)^{-1}(x_{12}-x_1)(x_{12}-x_2)^{-1} \\
(x_{23}-x_2)(x_{23}-x_3)^{-1}(x_{13}-x_3).
\end{multline*}
Hence, the holonomy $P_{13,12}P_{12,23}P_{23,13}$ is trivial if and only if
the multi ratio around the hexagon indicated by the arrows in
Figure~\ref{fig:combi_DT} is
\begin{equation}
  \label{eq:multiratio_DT}
  M_6(x_1,x_{12},x_2,x_{23},x_3,x_{13})=-1,
\end{equation}
where the multi ratio of an ordered six--tuple of quaternions $x_1, ..., x_6\in
\H$ is defined by
\[ M_6(x_1,x_2,x_3,x_4,x_5,x_6) =
(x_1-x_2)(x_2-x_3)^{-1}(x_3-x_4)(x_4-x_5)^{-1}(x_5-x_6)(x_6-x_1)^{-1}. \]
\begin{figure}[hbt]
  \centering
  \resizebox{5cm}{4cm}{\includegraphics{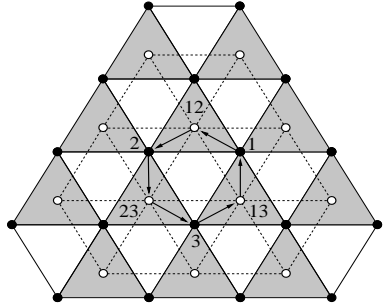}}
  \caption{Combinatorics of the Darboux transformation.}
  \label{fig:combi_DT}
\end{figure}

Note that (unlike suggested by Figure~\ref{fig:combi_DT}) we do not assume
here that the vertices of $M$ have valence six: around the faces of $M'$ that
correspond to vertices of $M$ (and which are not necessarily triangles) the
holonomy of the connection \eqref{eq:connection_DT} is always trivial
(regardless of the valence of the vertex). The triviality of the holonomy
around a face of $M'$ that corresponds to a white triangle of $M$ (and
therefore is a triangle) is always equivalent to the multi ratio condition
\eqref{eq:multiratio_DT}.

The following theorem summarizes the preceding discussion.

\begin{theorem}\label{th:local_DT}
  Let $M$ be a discrete surface, i.e., the vertex set of a black and white
  triangulated smooth surface, and let $M'$ be the cellular decomposition
  whose vertices are the black triangles (see Section~\ref{sec:three_gen}).
  Let $f\colon M\rightarrow S^4$ be an immersion of a discrete surface~$M$
  into $S^4=\HP^1$ and let $f^\sharp \colon M' \rightarrow S^4$ be a map from
  $M'$ to $S^4$ such that the corresponding line subbundles $L$ and $L^\sharp$
  of the trivial $\H^2$--bundles over $M$ and $M'$ respectively
  satisfy~\eqref{eq:regular_DT}.  Then, the following conditions are
  equivalent:
\begin{itemize}
\item[i)] around each white triangle of $M$, the multi ratio condition
  \eqref{eq:multiratio_DT} is satisfied for the hexagon indicated in Figure
  \ref{fig:combi_DT},
\item[ii)] the connection \eqref{eq:connection_DT} on the bundle $L^\sharp$ is
  flat, and
\item[iii)] locally the bundle $L^\sharp$ can be obtained by prolonging a
  holomorphic section of $V/L$ (which needs to be nowhere vanishing since we
  assume \eqref{eq:regular_DT} to be satisfied).
\end{itemize}
\end{theorem}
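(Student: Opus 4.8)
The plan is to prove the cyclic chain of equivalences by establishing the two non-trivial links $\mathrm{(i)}\Leftrightarrow\mathrm{(ii)}$ and $\mathrm{(ii)}\Leftrightarrow\mathrm{(iii)}$, since each has essentially been set up in the discussion preceding the theorem. Most of the conceptual work has already been done; what remains is to assemble the pieces carefully and to verify that the one genuinely computational identity—the holonomy formula—is correct.

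\textbf{The equivalence $\mathrm{(ii)}\Leftrightarrow\mathrm{(iii)}$.} I would begin here, as it is the more structural of the two. The key observation, recorded in the paragraph defining the connection \eqref{eq:connection_DT}, is that a local section $\varphi\in\Gamma(L^\sharp)$ is the prolongation of a holomorphic section of $V/L$ precisely when $\varphi_{b_1}\equiv\varphi_{b_2}\bmod L_v$ for adjacent black triangles $b_1,b_2$ sharing the vertex $v$, and that this congruence is by construction exactly the statement that $\varphi$ is parallel for \eqref{eq:connection_DT}. Thus $L^\sharp$ locally arises by prolonging a holomorphic section if and only if $L^\sharp$ locally admits a non-trivial parallel section. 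The standard fact that a line-bundle connection admits local parallel sections if and only if it is flat then gives $\mathrm{(ii)}\Leftrightarrow\mathrm{(iii)}$ directly. The only point deserving care is the parenthetical remark in $\mathrm{(iii)}$: I must note that condition \eqref{eq:regular_DT}, which is a standing hypothesis, forces the prolonging holomorphic section to be nowhere vanishing, so that the prolongation indeed defines a map into $S^4$ rather than degenerating.

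\textbf{The equivalence $\mathrm{(i)}\Leftrightarrow\mathrm{(ii)}$.} Here flatness means triviality of the holonomy around every face of $M'$. I would invoke the observation already made that the holonomy around any face of $M'$ corresponding to a vertex of $M$ is automatically trivial, independently of the valence of that vertex—this follows because all the edges of such a face pass through the single fiber $(V/L)_v$, so the composite of the maps in \eqref{eq:connection_DT} telescopes to the identity. Consequently flatness reduces to triviality of the holonomy around exactly those faces of $M'$ that correspond to white triangles of $M$. For such a face I would compute the holonomy $P_{13,12}P_{12,23}P_{23,13}$ using the affine form of the connection, obtaining the displayed product of the form $(x_{13}-x_1)^{-1}(x_{12}-x_1)(x_{12}-x_2)^{-1}(x_{23}-x_2)(x_{23}-x_3)^{-1}(x_{13}-x_3)$, and then recognize that this quaternion equals $1$ if and only if the multi-ratio \eqref{eq:multiratio_DT} equals $-1$. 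The reordering into the multi-ratio $M_6$ as defined, together with tracking the sign, is the one place where I expect the bookkeeping to require genuine attention.

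\textbf{Main obstacle.} The hard part is purely computational and lies in the holonomy identity: verifying over the non-commutative field $\H$ that the clockwise holonomy around a white triangle equals the stated product, and that this product being $1$ is equivalent to $M_6=-1$ for the indicated hexagon. Because quaternionic multiplication is non-commutative, the order of factors and the placement of inverses must be tracked exactly, and the apparent sign discrepancy (holonomy $=1$ versus $M_6=-1$) must be reconciled by carefully writing out the definition of $M_6$ on the hexagon $(x_1,x_{12},x_2,x_{23},x_3,x_{13})$ and comparing term by term with the holonomy product. Once this single identity is checked, the remaining implications follow formally from the preceding discussion, and the cyclic structure $\mathrm{(i)}\Rightarrow\mathrm{(ii)}\Rightarrow\mathrm{(iii)}\Rightarrow\mathrm{(i)}$ closes.
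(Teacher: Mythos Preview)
Your proposal is correct and follows essentially the same approach as the paper: the theorem is explicitly stated there as a summary of the preceding discussion, which establishes $\mathrm{(ii)}\Leftrightarrow\mathrm{(iii)}$ via the identification of prolongations with parallel sections and $\mathrm{(i)}\Leftrightarrow\mathrm{(ii)}$ via the observation that holonomy around vertex-faces of $M'$ is automatically trivial together with the affine holonomy computation around white-triangle faces. You have correctly identified the one genuinely delicate point, namely the non-commutative bookkeeping that turns ``holonomy $=1$'' into ``$M_6=-1$''.
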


\begin{definition}\label{def:DT}
  A \emph{Darboux transform} of an immersion $f\colon M\rightarrow S^4$ of a
  discrete surface~$M$ into $S^4=\HP^1$ is a map $f^\sharp \colon M'
  \rightarrow S^4$ with the property that the corresponding line subbundles
  $L$ and $L^\sharp$ of the trivial $\H^2$--bundles over $M$ and $M'$
  satisfy~\eqref{eq:regular_DT} and the conditions of the preceding theorem.
\end{definition}

The situation is completely analogous to the smooth case~\cite{BLPP}:
quaternionic holomorphic geometry provides a correspondence between solutions
to a non--linear zero curvature equation of M\"obius geometric origin on the
one hand and solution to the linear equation describing quaternionic
holomorphicity on the other hand. Locally, one direction of this
correspondence is realized by projecting parallel sections of $L^\sharp$ to the
quotient $V/L$; the other direction is realized by prolonging nowhere
vanishing holomorphic sections of~$V/L$.

It should be noted that the Darboux transformation for surfaces in $\HP^1$ is
directed: as one can easily see from i) of Theorem~\ref{th:local_DT}, if
$f^\sharp$ is an immersed Darboux transform of $f$, then $f^\perp$ is a Darboux
transform of $(f^\sharp)^\perp$, where $\perp$ denotes the dual immersion into
$(\HP^1)^*$.  For any isomorphism $\HP^1\cong (\HP^1)^*$, the transformation
$\perp$ is an orientation reversing conformal diffeomorphism. In particular,
for the right identification the transformation $\perp$ corresponds to
quaternionic conjugation.

\subsection{Example}

As an example we briefly discuss a special case of our theory.  Let $W$ be the
complex holomorphic line bundle over the vertices of the regular triangulation
of the plane all of whose holomorphic sections are of the form $\psi=\varphi
f$, where $\varphi$ is a trivializing holomorphic section and $f$ is a complex
function that maps all black triangles to positive equilateral triangles in
$\C$. Figures~\ref{fig:equilateral_black_triangles} shows two maps $f_i$,
$i=1$, $2$ that are obtained as quotient of the form $f_i=\psi_i/\varphi$ from
local holomorphic sections $\psi_i$, $i=1$, $2$ of $W$.

\begin{figure}[hbt]
  \centering
  \resizebox{12cm}{!}{\includegraphics{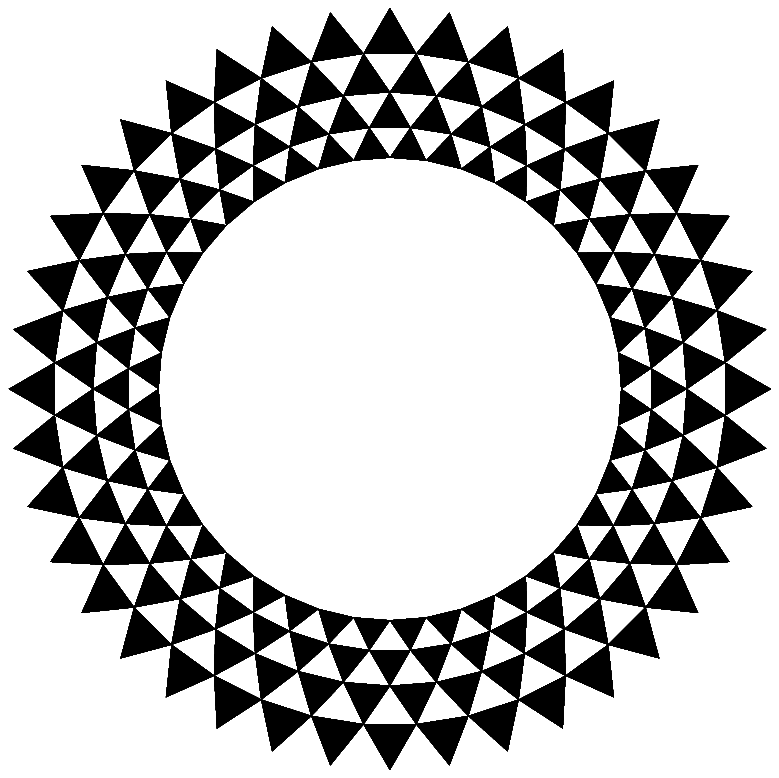}\qquad 
    \includegraphics{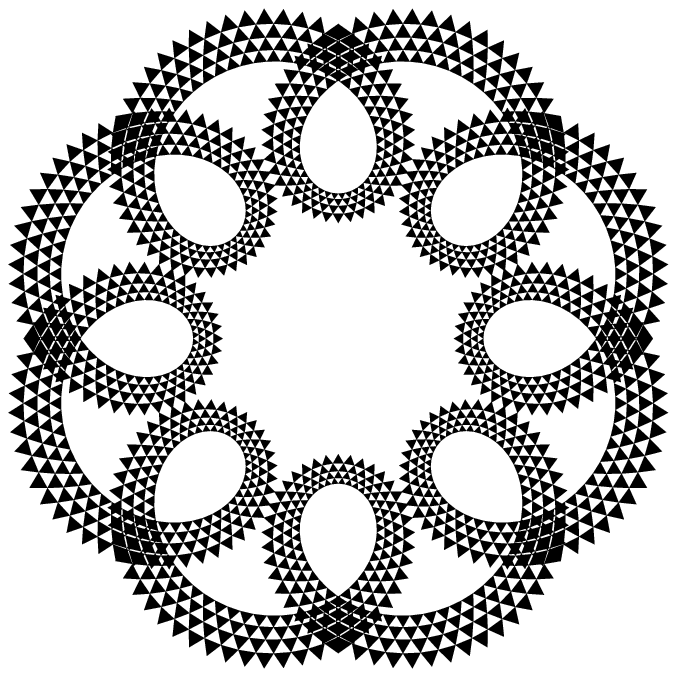}}
  \caption{Two maps $f_1$ and $f_2$ into the plane with equilateral black
    triangles.}
  \label{fig:equilateral_black_triangles}
\end{figure}

Figure~\ref{fig:example_DT} shows the Darboux transform of $f_1$ obtained by
prolonging the holomorphic section $\psi_2=\varphi f_2$, where $f_1$ here is
the 5--fold covering of the ``circle'' and $4$ is the maximal number of ``loops''
that can be ``added''.

\begin{figure}[hbt]
  \centering
  \resizebox{7cm}{!}{\includegraphics{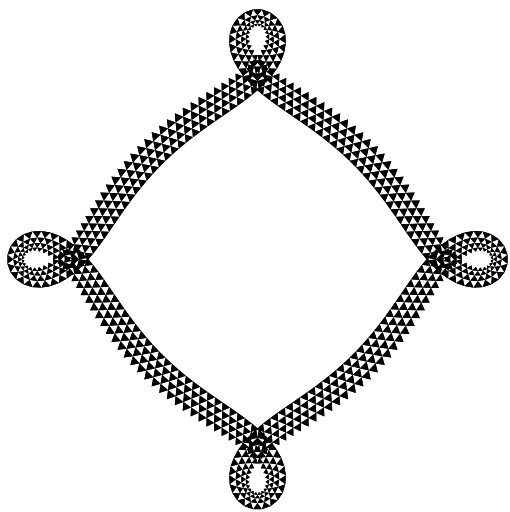}}
  \caption{A Darboux transform of $f_1$.}
  \label{fig:example_DT}
\end{figure}

\subsection{Iterated Darboux transforms and Davey--Stewartson flow}
\label{sec:interate_and_KP}
We show now how a solution to the discrete Davey--Stewartson equation of
Konopelchenko and Schief~\cite{KS03} can be interpreted as a sequence of
iterated Darboux transforms.

For relating iterated Darboux transforms to the discrete Davey--Stewartson
equation it is sufficient to reinterpret the underlying combinatorics,
because Darboux transforms and the space-- and time--discrete
Davey--Stewartson flow are both defined by the same kind of multi ratio
condition: a map $x\colon \Z^3 \rightarrow \H$ a solution to the discrete
Davey--Stewartson equation of \cite{KS03} if and only if it satisfies the
multi ratio equation
\begin{equation}
  \label{eq:multiratio_DS}
  M_6(x_1,x_{12},x_2,x_{23},x_3,x_{13})=-1
\end{equation}
on each cube of the $\Z^3$--lattice with notation as indicated in
Figure~\ref{fig:cube_DS}.

\begin{figure}[hbt]
  \centering
  \resizebox{3.5cm}{!}{\input{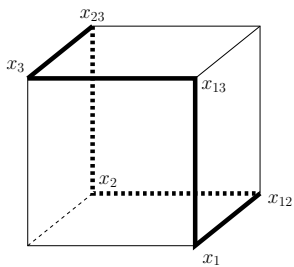}}
  \caption{The space-- and time--discrete Davey--Stewartson flow is defined by
    the multi ratio condition $M_6(x_1,x_{12},x_2,x_{23},x_3,x_{13})=-1$ on the
    cubes of a $\Z^3$--lattice.}
  \label{fig:cube_DS}
\end{figure}

In order to interpret a solution to equation \eqref{eq:multiratio_DS} on the
$\Z^3$--lattice as a sequence of iterated Darboux transforms, we project the
$\Z^3$--lattice to the plane perpendicular to the vector $(1,1,1)$.  This
yields a regular triangulation of the plane whose set of vertices has a
natural tri--coloring: two vertices of the triangulation are given the same
color if they have preimages with the same distance from the plane. This is
illustrated in Figure~\ref{fig:cube_proj} which shows the image of one cube of
the $\Z^3$--lattice under the projection to the plane and the tri--coloring of
its vertices.

\begin{figure}[hbt]
  \centering
  \resizebox{3cm}{2.7cm}{\includegraphics{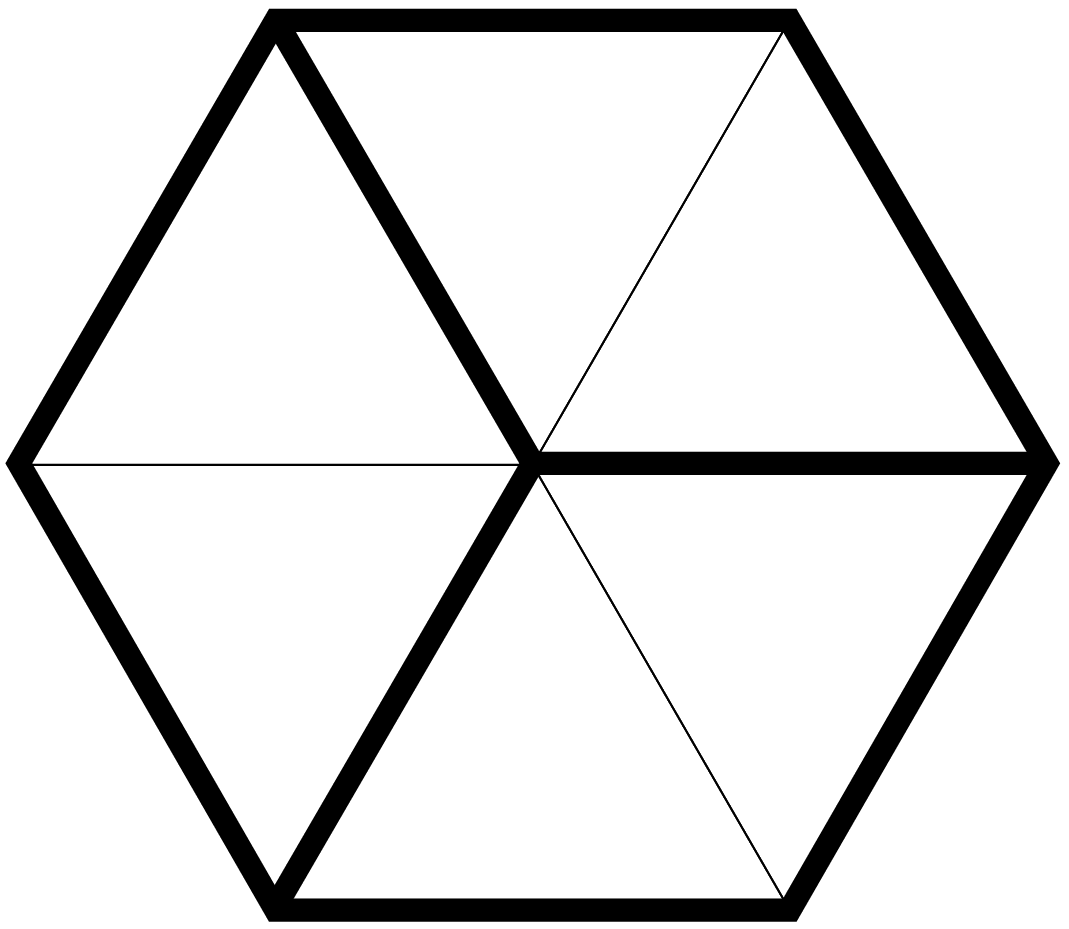}} \qquad \qquad
  \resizebox{3cm}{2.7cm}{\includegraphics{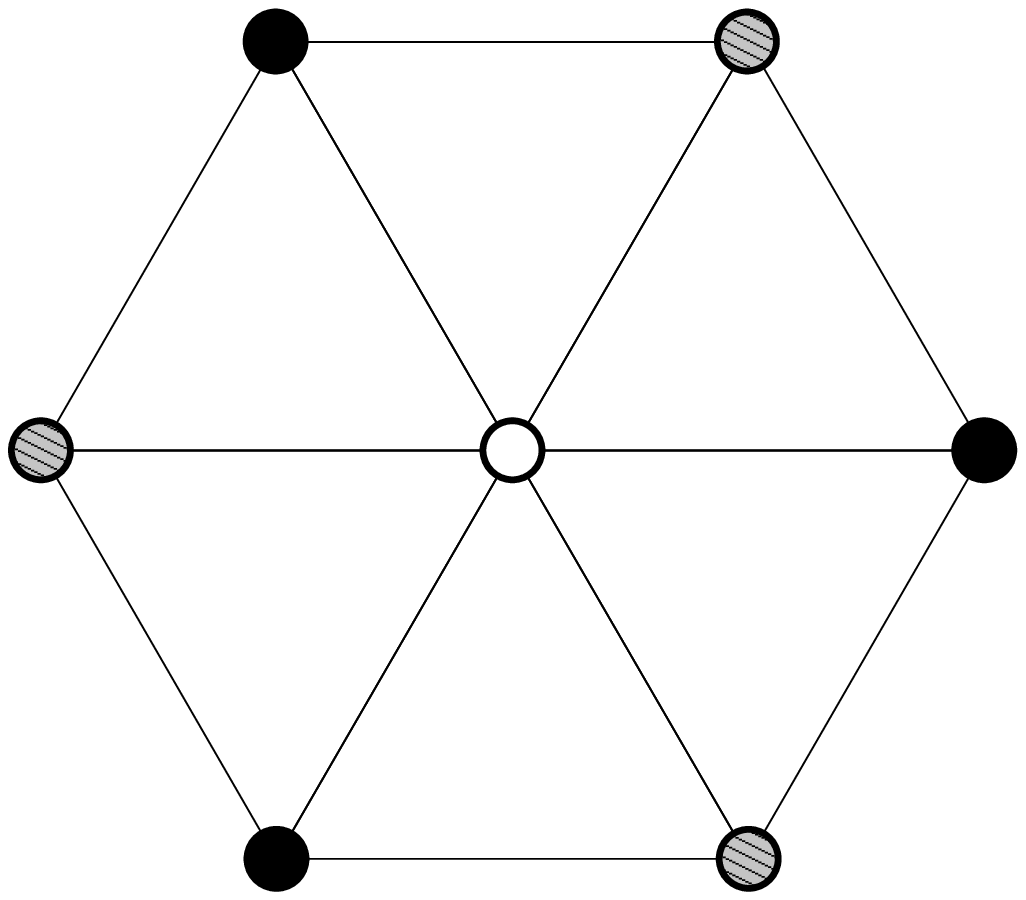}}
  \caption{Planar projection of a cube and the tri--coloring of its vertices.}
  \label{fig:cube_proj}
\end{figure}

As explained in Section~\ref{sec:three_gen}, such a triangulation with
tri--colored set of vertices gives rise to a three periodic sequence $M$, $M'$
and $M''$ of cellular decompositions with black and white colored faces.  In
our case $M$, $M'$ and $M''$ are triangulations: each of the colors can be
taken as the set of vertices of a triangulation whose black triangles
correspond to the next height--level of $\Z^3$ over the plane and whose white
triangles correspond to two height levels above. (As an example, see
Figure~\ref{fig:bw_in_3colored} in Section~\ref{sec:three_gen} which shows the
triangulation $M$ whose set of vertices~$\mathcal{V}$ are the grey vertices of
the tri--colored triangulation, whose black faces $\mathcal{B}$ are the black
vertices, and whose white faces $\mathcal{W}$ are the white vertices.)

By definition, a Darboux transform of an immersion $f\colon M \rightarrow
S^4$ of a discrete surface $M$ is defined on the ``black triangulation''
$M'$.  Similarly, a Darboux transform of an immersion of $M'$ is defined on
the ``white triangulation'' $M''$ and Darboux transforming an immersion of
$M''$ yields a map defined on the initial triangulation $M$.

It is immediately clear from the multi ratio equations \eqref{eq:multiratio_DT}
and \eqref{eq:multiratio_DS}, that---under the correspondence between the
different height--levels of the $\Z^3$--lattice over the plane perpendicular
to $(1,1,1)$ and the three periodic sequence $M$, $M'$ and $M''$ of
triangulations with back and white colored faces---a solution to the
Davey--Stewartson equation corresponds to a sequence of iterated Darboux
transforms: for every cube of the $\Z^3$--lattice, the multi ratio is taken
along the fat line indicated in Figure~\ref{fig:cube_DS} which projects to the
hexagon bounding Figure~\ref{fig:cube_proj} and, in the triangulations with
bi--colored faces, corresponds to a hexagon as indicated in
Figure~\ref{fig:combi_DT}.

\section{The spectral curve}\label{sec:spectral}

For immersions of compact surfaces, a natural question is whether the space of
all Darboux transforms has any interesting the structure.  In the following we
show how, for generic immersions of a discrete torus with regular
combinatorics, this space can be desingularized to a Riemann surface of
finite genus, the spectral curve of the immersion. This spectral curve has
similar properties as in the smooth case.  In particular, it has a canonical
geometric realization as a family of algebraic curves in $\CP^3$. By
construction the spectral curve does not change under M\"obius transformations
and, as a consequence of Bianchi permutability, it is preserved under the
evolution by Darboux transforms.

\subsection{Definition and properties of the spectral
  curve}\label{sec:def_spectral}
The idea to study the spectral curve as an invariant of immersed tori goes
back to Taimanov~\cite{Ta98} and Grinevich and Schmidt~\cite{GS98}. A M\"obius
invariant approach to the spectral curve of smooth tori in $S^4$ is developed
in our papers~\cite{BLPP,BPP}. In the following we present the analogous
theory for discrete tori.

As in the smooth case, Darboux transforms of an immersed discrete surface
$f\colon M \rightarrow S^4$ correspond to nowhere vanishing holomorphic
sections with monodromy of the quaternionic holomorphic line bundle $V/L$: by
definition, a Darboux transform $f^\sharp\colon M'\rightarrow S^4$ of an
immersion $f\colon M \rightarrow S^4$ is a line subbundle $L^\sharp$ of the
trivial $\H^2$--bundle over $M'$ that satisfies \eqref{eq:regular_DT} and has
the property that the induced connection on $L^\sharp$ is flat.  In case that
the underlying surface has non--trivial topology, parallel sections of this
flat connection on $L^\sharp$ are in general not defined on the discrete
surface $M'$ itself, but are sections with holonomy on its universal covering.
Projecting such a parallel section with holonomy of $L^\sharp$ to the quotient
bundle $V/L$ yields a nowhere vanishing holomorphic section with monodromy
of~$V/L$, that is, a holomorphic section of the pull back of $V/L$ to the
universal covering of $M$ which is equivariant with respect to a
representation $h\in \Hom(\Gamma,\H_*)$ of the group of deck transformations
$\Gamma$ in the sense that
\begin{equation}
  \label{eq:monodromy}
  \gamma^* \psi = \psi h_\gamma
\end{equation}
for all deck transformations $\gamma\in \Gamma$.  Taking conversely a line
bundle $L^\sharp$ spanned by the prolongation of a nowhere vanishing holomorphic
section with monodromy of $V/L$ yields a global Darboux transform $f^\sharp$ of
$f$.  This proves the following proposition.

\begin{prop}\label{prop:global_DT_sections_with_monodromy} 
  There is a bijective correspondence between global Darboux transforms of an
  immersed discrete surface $f\colon M \rightarrow S^4 =\HP^1$ and nowhere
  vanishing holomorphic sections with monodromy of the quaternionic
  holomorphic line bundle $V/L$ up to scale.
\end{prop}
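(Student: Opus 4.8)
The plan is to establish the claimed bijection by chasing through the definitions already set up in the excerpt, since almost all the work has been done in the paragraphs preceding the statement. The correspondence has two directions, and I would prove each as a well-defined map and then check they are mutually inverse.

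First I would set up the forward direction, from Darboux transforms to holomorphic sections with monodromy. Given a global Darboux transform $f^\sharp$ with line subbundle $L^\sharp$ over $M'$, Definition~\ref{def:DT} guarantees that $L^\sharp$ satisfies \eqref{eq:regular_DT} and carries the flat connection \eqref{eq:connection_DT}. On the universal covering $\tilde M'$ the flatness produces a parallel section $\varphi$ of the pullback of $L^\sharp$, unique up to a global quaternionic scale, and its failure to descend to $M'$ is measured precisely by a monodromy representation $h\in\Hom(\Gamma,\H_*)$. By the characterization already recorded in the excerpt, parallelity of $\varphi$ with respect to \eqref{eq:connection_DT} is equivalent to $\varphi$ being the prolongation of a holomorphic section $\psi=\pi\varphi$ of $V/L$; projecting preserves the equivariance, so $\psi$ satisfies \eqref{eq:monodromy} with the same $h$. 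The condition \eqref{eq:regular_DT} forces $\varphi_b\notin L_{v_i}$ for each vertex $v_i$ of $b$, which is exactly the statement that $\psi=\pi\varphi$ is nowhere vanishing. Thus I obtain a nowhere vanishing holomorphic section with monodromy, well defined up to scale.

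Next I would construct the inverse direction. Starting from a nowhere vanishing holomorphic section $\psi$ of $V/L$ with monodromy $h$, its prolongation $\hat\psi$ is a section of the trivial $\H^2$--bundle over the black triangles of $\tilde M'$; the non--vanishing of $\psi$ together with \eqref{eq:ind_hol} guarantees $\hat\psi_b\neq 0$ and $\hat\psi_b\notin L_{v_i}$, so $L^\sharp=\hat\psi\H$ is well defined and satisfies \eqref{eq:regular_DT}. Because $\psi$ has monodromy $h$ and prolongation commutes with deck transformations, $\hat\psi$ is $h$--equivariant, so the line $L^\sharp=\hat\psi\H$ is genuinely $\Gamma$--invariant and descends to a global line subbundle over $M'$. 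By construction $\hat\psi$ is parallel for \eqref{eq:connection_DT}, so the induced connection admits a nonzero parallel section and is therefore flat; hence $L^\sharp$ is a global Darboux transform by Definition~\ref{def:DT}. Passing to ``up to scale'' on the section side matches the projective nature of $L^\sharp=\hat\psi\H$ on the bundle side.

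Finally I would verify that the two constructions are inverse to one another: prolonging the projection $\pi\varphi$ of a parallel section returns $\varphi$ by uniqueness of the prolongation, and projecting the prolongation of $\psi$ returns $\psi$, so the maps compose to the identity in both orders, modulo the common scaling ambiguity. I do not expect a serious obstacle here; the only point demanding genuine care is the descent argument, namely checking that $h$--equivariance of $\hat\psi$ makes the line bundle $L^\sharp$ honestly $\Gamma$--invariant and hence defined on $M'$ rather than merely on $\tilde M'$ — this is where the quaternionic (rather than merely linear) nature of the monodromy $h_\gamma\in\H_*$ must be used, since scaling by $h_\gamma$ preserves the quaternionic line $\hat\psi\H$. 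The remaining verifications are the routine bookkeeping already implicit in the discussion preceding the proposition.
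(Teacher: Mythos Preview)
Your proposal is correct and follows essentially the same approach as the paper, which treats the proposition as an immediate consequence of the discussion preceding it (the text ends with ``This proves the following proposition''). You have simply made explicit the descent argument and the verification that the two constructions are mutually inverse, both of which the paper leaves implicit.
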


Scaling a holomorphic section $\psi$ with monodromy by a quaternion
$\lambda\in \H_*$ yields the section $\psi\lambda$ with conjugated monodromy
$\lambda^{-1} h \lambda$, but does not change the corresponding Darboux
transform.  The space of Darboux transforms of an immersed discrete surface
$f\colon M \rightarrow S^4$ is thus fibered over the conjugacy classes of
representations $\Hom(\Gamma,\H_*)/\H_*$ that are possible for holomorphic
sections with monodromy of $V/L$.  This suggests, as a first step to
understanding the structure of the space of Darboux transforms, to investigate
the monodromies of holomorphic sections and motivates the following
definition.
\begin{definition}
  The \emph{quaternionic spectrum} of a quaternionic holomorphic line bundle
  $W$ over a discrete surface is the subset $\Spec_\H(W)\subset
  \Hom(\Gamma,\H_*)/\H_*$ of conjugacy classes of quaternionic representations
  of the group $\Gamma$ of deck transformations that arise as multiplier of
  holomorphic sections with monodromy of $W$.
\end{definition}

If the underlying surface $M$ is a discrete torus, its group of deck
transformations~$\Gamma$ is abelian such that every conjugacy class of
representations $\Hom(\Gamma,\H_*)/\H_*$ contains a complex representations
$h\in \Hom(\Gamma,\C_*)$ which is unique up to complex conjugation $h\mapsto
\bar h = j^{-1} h j$.  For the study of Darboux transforms of an immersed
discrete torus it is therefore sufficient to consider complex representations
$h\in \Hom(\Gamma,\C_*)$, because every Darboux transform can be obtained from
a holomorphic section $\psi$ with complex monodromy $h\in \Hom(\Gamma,\C_*)$
of $V/L$. This complex representation $h\in \Hom(\Gamma,\C_*)$ is unique up
to complex conjugation: if $\psi$ is a holomorphic section with monodromy $h$,
the section $\psi j$ has monodromy $\bar h$.

\begin{definition}
  The set of complex representations that are possible as multiplier of
  holomorphic sections with monodromy is called the \emph{spectrum}
\[ \Spec(W) = \{ h\in \Hom(\Gamma,\C_*) \mid \exists\textrm{ holomorphic }
\psi\neq 0 \textrm{ with } \gamma^* \psi = \psi h_\gamma \textrm{ for all }
\gamma\in \Gamma \}
\]
of a quaternionic holomorphic line bundle $W$ over a discrete torus $M$.
\end{definition}
The spectrum $\Spec(W)$ is invariant under the real involution $\rho(h)=\bar
h$ and the quaternionic spectrum is the quotient $\Spec_\H(W) =
\Spec(W)/\rho$.

The spectrum $\Spec(V/L)$ of the quaternionic holomorphic line bundle $V/L$
corresponding to an immersed discrete torus $f\colon M \rightarrow \HP^1$ is
never empty, because by Kodaira correspondence the space of sections with
trivial monodromy is at least quaternionic 2--dimensional so that
$h=1\in\Spec(V/L)$.  For arbitrary quaternionic holomorphic line bundles $W$
over discrete tori one can prove (similar to Part~1 in the proof of
Theorem~\ref{th:spectral_curve_W}) that the spectrum $\Spec(W)\subset
\Hom(\Gamma,\C_*)\cong \C_*\times \C_*$ extends to an algebraic subset of
$\C\times \C$ described by the vanishing of one polynomial function, the
determinant of a polynomial family of square matrices: in the system of
complex linear equations characterizing holomorphic sections with a given
monodromy, the number of variables $2|\mathcal{V}|$ equals the number of
equations $2|\mathcal{B}|$, because, for black and white triangulated tori,
Eulers formula $|\mathcal{V}|-|\mathcal{E}|+|\mathcal{B}|+|\mathcal{W}|=0$
together with $|\mathcal{E}|=3|\mathcal{B}|=3|\mathcal{W}|$ implies
\begin{equation}
  \label{eq:torus_vbw}
  |\mathcal{V}|=|\mathcal{B}|=|\mathcal{W}|
\end{equation} 
with $|\mathcal{V}|$, $|\mathcal{E}|$, $|\mathcal{B}|$ and $|\mathcal{W}|$
denoting the number of vertices, edges, black and white triangles.  Provided
the determinant of the corresponding polynomial family of square matrices is
not constant, the spectrum is thus a 1--dimensional algebraic set and can be
normalized to a Riemann surface of finite genus.

\begin{definition}
  Let $W$ be a quaternionic holomorphic line bundle over a discrete torus $M$
  with 1--dimensional spectrum $\Spec(W)$.  The \emph{spectral curve} of $W$
  is the Riemann surface $\Sigma$ normalizing $\Spec(W)$.  Under the
  normalization map $h\colon \Sigma\rightarrow \Spec(W)$, the involution
  $\rho$ of $\Spec(W)$ lifts to an anti--holomorphic involution $\rho\colon
  \Sigma\rightarrow \Sigma$.

  The spectral curve of an immersed discrete torus $f\colon M \rightarrow
  S^4$ for which the holomorphic line bundle $V/L$ has 1--dimensional
  spectrum $\Spec(V/L)$ is defined as the spectral curve of $V/L$.
\end{definition}

The spectral curve as a geometric invariant of tori was first introduced by
Taimanov, Grinevich and Schmidt \cite{Ta98,GS98} for immersions of smooth tori
into $\R^3$. It is defined using Floquet theory for periodic partial
differential operators, see \cite{Kr89,Ku93}.  The discrete analogue of
Floquet theory used to define the spectrum and spectral curve of quaternionic
holomorphic line bundles over discrete tori can immediately be generalized to
an arbitrary linear (``difference'') operator acting on the sections of a
vector bundle over the vertex set of a cellular decomposition of the torus
(with values in another bundle over the vertex set of another cellular
decomposition). The spectrum of such operators is always an algebraic subset
of $\Hom(\Gamma,\C_*)$. In case it is 1--dimensional one can therefore define
a spectral curve of finite genus by normalizing the spectrum.

In the following we will not pursue this general discussion, but investigate
the spectra $\Spec(W)$ of holomorphic line bundles $W$ over discrete tori $M$
with regular combinatorics.

\begin{theorem}\label{th:spectral_curve_W}
  For a discrete torus $M$ with regular combinatorics, the spectrum $\Spec(W)$
  of a quaternionic holomorphic line bundle $W$ over $M$ is always a
  1--dimensional algebraic set such that the spectral curve $\Sigma$ of $W$ is
  defined. It is the Riemann surface of finite genus that normalizes $h\colon
  \Sigma\rightarrow \Spec(W)$ the spectrum and carries an anti--holomorphic
  involution $\rho \colon \Sigma \rightarrow \Sigma$ with $h\circ \rho=\bar h$
  which extends to the compactification $\bar \Sigma$.

  If $W$ satisfies the genericity assumption \eqref{eq:genericity_assumption}
  below, there is a complex holomorphic line bundle $\mathcal{L}\rightarrow
  \Sigma$ whose fiber $\mathcal{L}_\sigma$ over $\sigma\in \Sigma$ is
  contained in the space of holomorphic sections with monodromy $h^\sigma$ of
  $W$ with equality away from a finite subset of $\Sigma$.  The pullback of
  $\mathcal{L}$ under the anti--holomorphic involution $\rho$ is
  $\rho^*\mathcal{L} = \mathcal{L} j$ with $j$ denoting the multiplication of
  sections by the quaternion~$j$.  In particular, the involution $\rho$ has no
  fixed points.
\end{theorem}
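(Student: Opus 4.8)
The plan is to reduce the existence of holomorphic sections with prescribed monodromy to the vanishing of a single holomorphic function on $\Hom(\Gamma,\C_*)$, and then feed the resulting affine curve into the standard normalization machinery. First I would fix generators $\gamma_1,\gamma_2$ of $\Gamma\cong\Z^2$, identifying $\Hom(\Gamma,\C_*)\cong\C_*\times\C_*$ via $h\mapsto(h_{\gamma_1},h_{\gamma_2})$. Pulling $W$ back to the universal cover and restricting scalars to $\C=\R\oplus\R\i\subset\H$ turns each fibre into a complex $2$--plane on which right multiplication by $h_\gamma\in\C_*$ is complex linear; a holomorphic section with monodromy $h$ is then determined by its $2|\mathcal{V}|$ complex coordinates on a fundamental domain, subject to the conditions $\psi_{|b}\in U_b$. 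Since each $U_b$ is complex codimension $2$ in $\Gamma(W_{|b})$, these amount to $2|\mathcal{B}|$ complex linear equations whose coefficients are Laurent--polynomial in $(h_{\gamma_1},h_{\gamma_2})$. By \eqref{eq:torus_vbw} the system is square, so $\Spec(W)=\{D=0\}$, where $D$ is the determinant of the coefficient matrix, a holomorphic (Laurent--polynomial) function on $\C_*\times\C_*$.

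The heart of Part 1 is to show that $D$ is not constant, which simultaneously rules out $\Spec(W)=\emptyset$ and $\Spec(W)=\C_*\times\C_*$ and forces $\{D=0\}$ to be $1$--dimensional. This is exactly where regular combinatorics enters. Using the three distinguished directions and the notion of a \emph{row} of vertices and black triangles, I would order the unknowns row by row and rewrite the holomorphicity equations as a recursion between consecutive rows, so that $D$ becomes, up to an explicit monomial, the determinant of a product of elementary row--to--row transfer matrices twisted by the multiplier in the transverse direction. Because every row carries the same combinatorial pattern, the leading and trailing terms of $D$ in one multiplier, say as $h_{\gamma_1}\to 0$ and $h_{\gamma_1}\to\infty$, can be read off explicitly and shown not to cancel. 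I expect this asymptotic analysis to be the main obstacle: it is the one step that genuinely requires the regular combinatorics and a careful bookkeeping of the transfer structure.

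Granting non--constancy, $\Spec(W)$ is an affine curve in $\C_*\times\C_*$ extending to an algebraic subset of $\C\times\C$; its closure in $\CP^1\times\CP^1$ is a projective curve, and passing to the normalization yields a Riemann surface $\Sigma$ of finite genus together with the map $h\colon\Sigma\to\Spec(W)$. The complex conjugation $h\mapsto\bar h$ of $\C_*\times\C_*$ preserves $\Spec(W)$, since if $\psi$ has monodromy $h$ then $\psi\j$ has monodromy $\bar h=\j^{-1}h\j$; by the universal property of normalization it therefore lifts to an anti--holomorphic involution $\rho\colon\Sigma\to\Sigma$ with $h\circ\rho=\bar h$, which extends to $\bar\Sigma$ by properness.

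For Part 2, over each $\sigma\in\Sigma$ the space $\mathcal{H}_\sigma$ of holomorphic sections with monodromy $h^\sigma$ is the kernel of the coefficient matrix $A(h^\sigma)$ from Part 1; the genericity assumption \eqref{eq:genericity_assumption} guarantees that $A$ has corank exactly $1$ away from a finite subset, so these kernel lines vary holomorphically and, by the usual extension of a generically--rank--one kernel over the normalization (for instance through the adjugate of $A$, whose columns frame the kernel line), assemble into a complex holomorphic line bundle $\mathcal{L}\to\Sigma$ with $\mathcal{L}_\sigma\subseteq\mathcal{H}_\sigma$ and equality off the exceptional set. Finally, right multiplication by $\j$ sends a holomorphic section with monodromy $h^\sigma$ to one with monodromy $\overline{h^\sigma}=h^{\rho(\sigma)}$, whence $\mathcal{L}_\sigma\j=\mathcal{L}_{\rho(\sigma)}$, that is $\rho^*\mathcal{L}=\mathcal{L}\j$. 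If $\rho(\sigma)=\sigma$ this would force the complex line $\mathcal{L}_\sigma$ to be invariant under right multiplication by $\j$; but that operation anti--commutes with the complex structure (right multiplication by $\i$) and squares to $-1$, hence is a complex anti--linear map admitting no invariant complex line, since an eigenvalue $\lambda\in\C$ would satisfy $|\lambda|^2=-1$. Therefore $\rho$ is fixed--point free.
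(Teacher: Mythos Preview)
Your overall architecture matches the paper's: reduce to a square polynomial linear system, show the vanishing locus is a curve, normalize, and build $\mathcal{L}$ from the kernel. Your argument for $\rho^*\mathcal{L}=\mathcal{L}\j$ and fixed--point--freeness is the same as the paper's (Lemma~\ref{lem:linebundle}). However, the two places you flag as needing work are exactly where the paper's argument is sharper than your sketch, and your version as written has genuine gaps.

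First, the paper does \emph{not} prove 1--dimensionality by checking non--cancellation of leading/trailing terms of $D$. Instead, after choosing an \emph{adapted} basis $\gamma,\eta$ (aligned with one of the three edge directions, so that black triangles sit on one side of $\gamma$---this is stronger than your arbitrary generators and is where regular combinatorics really enters), it rewrites the holomorphicity condition as a genuine row--to--row evolution: initial data on the bottom row propagates upward by invertible transfer operators $T_i(\mu)$, and $D_{\mu,\lambda}$ has nontrivial kernel iff $\lambda$ is an eigenvalue of the composite $H(\mu)=T_{m-1}(\mu)\cdots T_0(\mu)$. Thus $\Spec(W)=\{\det(\lambda-H(\mu))=0\}$ is automatically a $2n$--sheeted branched cover of the $\mu$--plane $\C_*$ (Lemma~\ref{lem:one_dim_analytic}), and 1--dimensionality falls out for free---no delicate asymptotic balancing required. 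Your transfer--matrix idea is heading here, but without the adapted basis the recursion need not be a bijection between consecutive rows, and the eigenvalue reformulation is what actually closes the argument.

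Second, you assert that \eqref{eq:genericity_assumption} ``guarantees that $A$ has corank exactly $1$ away from a finite subset,'' but this is precisely what has to be proven, and it does not follow formally from the hypothesis as you state it. The paper's mechanism is specific: $T_i(\mu)$ fails to be invertible exactly when $\mu$ equals the holonomy of a natural quaternionic connection along the $i$--th row (the ``horizontal holonomy''); assumption \eqref{eq:genericity_assumption} says these $2m$ values $\mu_i,\bar\mu_i$ are distinct, so at each such point $D_{\mu,0}$ has a $1$--dimensional kernel (Lemma~\ref{lem:number_points}). Combined with Lemma~\ref{lem:infinity}, every connected component of $\bar\Sigma$ contains such an end, hence the minimal kernel dimension on each component is $1$, and Proposition~\ref{prop:fredholm} then yields the line bundle $\mathcal{L}$. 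Your adjugate construction is a fine alternative to Proposition~\ref{prop:fredholm} once generic corank $1$ is established, but you still need the horizontal--holonomy analysis to get there.
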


The theorem is a discrete version of Theorem~2.6 in \cite{BPP}.  Its proof is
given in Section~\ref{sec:proof}.  As in \cite{BPP}, the main work in the
proof is to investigate the asymptotics of $\Spec(W)$ and the asymptotics for
large monodromies of its holomorphic sections with monodromy. 

There are several essential differences to the smooth case~\cite{BPP} when the
spectral curve has always one or two ends, at most two connected components,
the minimal dimension of the spaces of holomorphic sections with monodromy
$h\in \Spec(W)$ is one, and generically the spectral genus is infinite. For a
quaternionic holomorphic line bundle $W$ over a discrete torus $M$ with
regular combinatorics,
\begin{itemize}
\item the spectral curve always has finite genus,
\item the spectral curve has many ends and may have many connected components
  (cf.\ Corollary~\ref{cor:number_of_infinities} and
  Lemma~\ref{lem:one_dim_analytic}) and
\item it is possible that the minimal dimension of the spaces of holomorphic
  sections with monodromy $h\in \Spec(W)$ is greater than one (see
  Section~\ref{sec:small}).
\end{itemize}

For example, a ``homogeneous'' torus with regular combinatorics in $S^4$, the
orbit of a finite group of M\"obius transformations, has a connected spectral
curve and three pairs of ends which correspond to the three directions of the
triangulation. This shows that, in the space of immersed discrete tori in
$S^4$ with regular combinatorics, there is a Zariski open subset of tori with
irreducible spectral curve.

If $W=V/L$ is the quaternionic holomorphic line bundle induced by a
sufficiently generic immersed torus $f\colon M \rightarrow S^4$, the line
bundle $\mathcal{L}\rightarrow \Sigma$ of Theorem~\ref{th:spectral_curve_W}
allows to geometrically interpret the points of the spectral curve $\Sigma$ as
Darboux transforms of $f$.  For all but finitely many points $h\in \Spec(V/L)$
in the spectrum there is then a unique Darboux transform corresponding to
holomorphic sections with monodromy $h$.  The quotient $\Sigma/\rho$ of the
spectral curve $\Sigma$ by the involution $\rho$ can thus be thought of as a
parameter space for the Darboux transforms of $f$.  More precisely:

\begin{theorem}\label{th:spectral_curve_f}
  Let $f\colon M \rightarrow S^4$ be an immersion of a discrete torus $M$ with
  regular combinatorics for which the bundle $V/L$ satisfies the genericity
  assumption \eqref{eq:genericity_assumption} and has irreducible spectral
  curve.  Taking prolongations of elements in the fibers of the bundle
  $\mathcal{L}$ (see Theorem~\ref{th:spectral_curve_W}) yields a map
  \[\hat F\colon M' \times \Sigma \rightarrow \CP^3\] 
  which is holomorphic in the second variable. The composition $F=\pi \circ
  \hat F$ with the twistor projection $\pi \colon \CP^3\rightarrow \HP^1$ has
  the property that $f^\sharp=F(-,\sigma)\colon M' \rightarrow \HP^1$, for all
  but finitely many points $\sigma \in \Sigma$, is the unique Darboux
  transform of $f$ that belongs to holomorphic section with monodromy
  $h^\sigma$ or $\bar h^\sigma=h^{\rho \sigma}$ of $V/L$.  In particular,
  $\hat F$ is compatible with the fixed point free anti--holomorphic
  involution $\rho$ in the sense that
  \[\hat F(b,\rho \sigma) = \hat F(b,\sigma) j\] for all $b\in M'=\mathcal{B}$
  and all $\sigma\in \Sigma$ with $j$ denoting multiplication by the
  quaternion $j$ seen as an anti--holomorphic involution of $\CP^3$.

  The map $\hat F$ uniquely extends to $M' \times \bar \Sigma$ as a family of
  algebraic curves $\hat F\colon M' \times \bar \Sigma\rightarrow \CP^3$. For
  every row of black triangles of $M$ there is a unique pair of points at
  infinity $\infty$, $\rho\infty\in \bar \Sigma\backslash\Sigma$ such that for
  each $b$ in that row
  \begin{equation}
    \label{eq:f_from_F}
    f(v) = F(b,\infty)=F(b,\rho\infty),
  \end{equation}
  where $v$ denotes the upper vertex of $b$.  Conversely, every point at
  infinity $\infty \in \bar \Sigma \backslash \Sigma$ belongs to one row of
  black triangles with the property that the value of $f$ at the upper vertex
  $v$ of each black triangle $b$ in that row is given by \eqref{eq:f_from_F}.
\end{theorem}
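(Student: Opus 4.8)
The plan is to construct $\hat F$ directly from the holomorphic line bundle $\mathcal{L}\to\Sigma$ provided by Theorem~\ref{th:spectral_curve_W}, using prolongation. For $\sigma\in\Sigma$ choose locally a holomorphic generator $\psi_\sigma$ of the complex line $\mathcal{L}_\sigma$, which lies in the space of holomorphic sections with monodromy $h^\sigma$ of $V/L$, and form its prolongation $\hat\psi_\sigma$, a section over $M'=\mathcal{B}$ of the trivial $\H^2$--bundle. Identifying $\H^2\cong\C^4$ through the twistor picture, the vector $\hat\psi_\sigma(b)\in\C^4$ is determined up to a complex scalar (since $\mathcal{L}_\sigma$ is a complex line), so wherever it is nonzero it spans a well-defined point $\hat F(b,\sigma)\in\CP^3$, and I set $F=\pi\circ\hat F$ with $\pi$ the twistor fibration, so that $F(b,\sigma)=\hat\psi_\sigma(b)\H=L^\sharp_b$. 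For fixed $b$ the assignment ``prolong and evaluate at $b$'' is a fixed complex-linear map on sections; composing it with the holomorphic family $\sigma\mapsto\psi_\sigma$ shows that $\sigma\mapsto\hat\psi_\sigma(b)$ is holomorphic, hence $\hat F(b,-)\colon\Sigma\to\CP^3$ is holomorphic on the open set where $\hat\psi_\sigma(b)\neq0$. The genericity assumption \eqref{eq:genericity_assumption} guarantees that this set is all of $\Sigma$ outside a finite subset, establishing the first assertion.

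To identify $F(-,\sigma)$ with Darboux transforms I would invoke Proposition~\ref{prop:global_DT_sections_with_monodromy}: for all but finitely many $\sigma$ the fiber $\mathcal{L}_\sigma$ is the entire space of holomorphic sections with monodromy $h^\sigma$, this space is one-complex-dimensional, and $\psi_\sigma$ is nowhere vanishing, so $F(-,\sigma)$ is precisely the unique Darboux transform attached to $h^\sigma$; since $h^\sigma$ and $\bar h^\sigma=h^{\rho\sigma}$ lie in the same quaternionic conjugacy class they yield the same transform. The equivariance $\hat F(b,\rho\sigma)=\hat F(b,\sigma)j$ follows formally from $\rho^*\mathcal{L}=\mathcal{L}j$ (Theorem~\ref{th:spectral_curve_W}) together with right-$\H$-linearity of prolongation, $\widehat{\psi j}_b=\hat\psi_b\,j$, and fixed-point-freeness of $\rho$ was already recorded there. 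I note that the two equalities $F(b,\infty)=F(b,\rho\infty)$ are then automatic, because $\pi$ is $\rho$-invariant ($\pi(\ell j)=\ell j\H=\ell\H$), so it suffices later to compute $F(b,\infty)$ for a single representative of each pair.

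The first analytic point is extension across the ends of the finite-genus surface $\Sigma$. I would argue that in a punctured holomorphic coordinate around each end the four coordinate functions of $\hat\psi_\sigma(b)$ extend \emph{meromorphically}; dividing out their common vanishing/pole order then produces a holomorphic extension $\hat F(b,-)\colon\bar\Sigma\to\CP^3$, and because $\bar\Sigma$ is a compact Riemann surface this map is automatically a morphism of algebraic curves. The meromorphy at the ends is not formal: it rests on the asymptotics of holomorphic sections with large monodromy developed for Theorem~\ref{th:spectral_curve_W}, which identify the ends of $\Spec(W)$ with the three lattice directions of the regular combinatorics and control the degeneration of the associated sections.

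The genuinely hard and geometric step is the correspondence between points at infinity and rows together with the formula $f(v)=F(b,\infty)$. The strategy is to read off, from the same large-monodromy asymptotics, that each end is attached to a single direction of the triangulation and hence to one row of black triangles, and that along this row the prolonged section concentrates so that in the limit $\hat\psi_\sigma(b)\H\to L_v$; that is, the Darboux transform collapses onto the original immersion evaluated at the vertex lying ahead in the propagation direction, namely the upper vertex $v$ of $b$. Concretely I expect to exhibit, as the monodromy tends to the end, a plane-wave normal form of the holomorphic section supported on the row, compute the limit of its prolongation at each $b$ in the row, and check that it equals a lift of $L_v$. The converse---that every end arises from exactly one row---then follows by matching the number of ends against the three directions using the structure from Theorem~\ref{th:spectral_curve_W} (cf.\ Corollary~\ref{cor:number_of_infinities} and Lemma~\ref{lem:one_dim_analytic}). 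This asymptotic normal-form analysis, rather than any of the formal bundle-theoretic manipulations, is where the real obstacle lies.
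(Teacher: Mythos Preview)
Your construction of $\hat F$ on $M'\times\Sigma$, its holomorphy in $\sigma$, the identification of $F(-,\sigma)$ with Darboux transforms via Proposition~\ref{prop:global_DT_sections_with_monodromy}, and the equivariance $\hat F(b,\rho\sigma)=\hat F(b,\sigma)j$ from $\rho^*\mathcal{L}=\mathcal{L}j$ all match the paper's argument essentially verbatim. One small point: the claim that $\hat\psi_\sigma(b)\neq 0$ off a \emph{finite} set is exactly Lemma~\ref{lem:generically_non_vanishing}, whose proof uses irreducibility of $\Sigma$ together with the behaviour of kernel sections at the ends---so it is not a purely formal consequence of~\eqref{eq:genericity_assumption} but already depends on the infinity analysis below. (Where $\hat\psi_\sigma(b)$ does vanish, the paper defines $\hat F(b,\sigma)$ via the first non-trivial Taylor coefficient, as you would expect.)

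Where your proposal diverges from the paper is the extension to $\bar\Sigma$ and the formula $f(v)=F(b,\infty)$. You anticipate a genuine asymptotic analysis---meromorphic extension of coordinate functions, a ``plane-wave normal form'', concentration of the prolonged section---and flag this as the real obstacle. The paper avoids all of that by exploiting a feature peculiar to the discrete setting: the family $D_{\mu,\lambda}$ is \emph{polynomial} in $(\mu,\lambda)$, so it extends for free from $\C_*\times\C_*$ to $\C\times\C$. By Lemma~\ref{lem:infinity}, at each end $\infty$ there is a unique normalized adapted basis with $\mu(\infty)\in\C_*$ and $\lambda(\infty)=0$; one then simply studies $\ker D_{\mu(\infty),0}$. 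Choosing the compatible fundamental domain so that the top propagation operator $T_{m-1}(\mu(\infty))$ is the non-invertible one, the sections in this kernel are nonzero on the $(m-1)^{\textrm{th}}$ row of vertices but their holomorphic extension to the $m^{\textrm{th}}$ row (the upper vertices of the top row of black triangles) vanishes identically. For such a black triangle $b$ with upper vertex $v$ this says $\psi_v=0$, i.e.\ the prolongation $\hat\psi_b$ lies in $L_v$, hence $F(b,\infty)=\hat\psi_b\H=L_v=f(v)$. No limiting procedure, no normal form: the formula is read off from the kernel at $\lambda=0$. The bijection between ends and rows then comes from Lemma~\ref{lem:number_points} and Corollary~\ref{cor:number_of_infinities}. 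So the step you identify as hardest is, in the paper's approach, essentially a one-line computation once the polynomial extension is in hand; your asymptotic route would presumably work but is considerably less direct.
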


The theorem is a discrete version of Theorem~4.2 in \cite{BLPP} and will be
proven in Section~\ref{sec:proof}.  

\begin{rem}\label{rem:after_main_th}
  Let $f\colon M \rightarrow S^4$ be an immersion of a discrete torus $M$ with
  regular combinatorics that satisfies \eqref{eq:genericity_assumption} and
  has irreducible spectral curve. Then:
  \begin{enumerate}
  \item[i)] For fixed $\sigma \in \Sigma$, the map $f^\sharp\colon
    M'\rightarrow \HP^1$ defined by $f^\sharp=F(-,\sigma)$ is a Darboux
    transform of $f$, except $\sigma$ is one of the finitely many $\sigma\in
    \Sigma$ for which the elements in $\mathcal{L}_\sigma$ are holomorphic
    sections with monodromy of $V/L$ that have a zero.  The maps $f^\sharp$
    obtained for $\sigma$ contained in this finite subset of $\Sigma$ are
    called \emph{singular Darboux transforms}. Note that, over black triangles
    $b\in M'=\mathcal{B}$ on which the elements of $\mathcal{L}_\sigma$ vanish
    identically, such singular Darboux transform $f^\sharp$ is not obtained as
    a prolongation of elements in $\mathcal{L}_\sigma$, but as the limit of
    prolongations of elements in $\mathcal{L}_{\sigma'}$ for
    $\sigma'\rightarrow \sigma$.
  \item[ii)] By construction, a Darboux transform $f^\sharp$ of $f$ can be
    obtained as $f^\sharp = F_\sigma$ for some $\sigma\in \Sigma$ unless the
    nowhere vanishing holomorphic sections with monodromy of $V/L$
    corresponding to $f^\sharp$ belong to one of the finitely many monodromies
    for which the space of holomorphic sections is higher dimensional.
  \item[iii)] For fixed $b\in M'=\mathcal{B}$, the map $F_b=F(b,-)$ is a
    (possibly branched) super--conformal Willmore immersion
    \[F_b\colon \bar \Sigma\rightarrow \HP^1.\] For every vertex $v$ of the
    triangle $b$, there is a pair of points at infinity $\infty$,
    $\rho\infty\in \bar \Sigma\backslash\Sigma$ such that
    \[f(v) = F_b(\infty)=F_b(\rho\infty).\]
      \end{enumerate}
\end{rem}

\subsection{Proof of Theorems~\ref{th:spectral_curve_W} and
  \ref{th:spectral_curve_f}}\label{sec:proof}
The strategy for proving Theorems~\ref{th:spectral_curve_W} and
\ref{th:spectral_curve_f} is similar to the smooth case \cite{BPP,BLPP}.  As
in the smooth case, the following proposition (cf.\ Proposition~3.1 of
\cite{BPP}) is essential for the proof:

\begin{prop} \label{prop:fredholm} Let $D_x$ be a holomorphic family of
  Fredholm operators depending on a parameter $x\in X$ in a connected complex
  manifold $X$.  Then, the minimal kernel dimension is generic and attained
  away from an analytic subset $Y\subset X$.  If the manifold $X$ is complex
  1--dimensional, the vector bundle $\mathcal{K}_x=\ker(D_x)$ defined over
  $X\backslash Y$ holomorphically extends through the isolated set $Y$ of
  points with higher dimensional kernel.  If the index $\index{(D_x)}$ of the
  operators $D_x$ is zero, the set of $x\in X$ for which $D_x$ is invertible
  is locally given as the vanishing locus of one holomorphic function.
\end{prop}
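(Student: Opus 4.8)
The plan is to reduce, locally near an arbitrary point $x_0\in X$, the infinite--dimensional problem to a finite--dimensional one by a Grushin (Schur complement) argument, and then to read off all three assertions from the resulting holomorphic family of finite matrices. Write $K=\ker D_{x_0}$ and $C=\operatorname{coker}D_{x_0}$, both finite--dimensional by the Fredholm property, and choose splittings $H_1=K\oplus K^\perp$ of the source and $H_2=R\oplus C$ of the target, where $R=\im D_{x_0}$. With respect to these splittings $D_x$ becomes a holomorphic family of block operators
\[
D_x=\begin{pmatrix} a(x) & b(x)\\ c(x) & e(x)\end{pmatrix},\qquad b(x)\colon K^\perp\to R,
\]
with $a(x_0)=0$, $c(x_0)=0$, $e(x_0)=0$ and $b(x_0)$ an isomorphism, so $b(x)$ stays invertible for $x$ near $x_0$. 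Eliminating the $K^\perp$--component yields the holomorphic family of finite linear maps
\[
M(x)=c(x)-e(x)\,b(x)^{-1}a(x)\colon K\longrightarrow C,
\]
and $u\mapsto\bigl(u,-b(x)^{-1}a(x)u\bigr)$ is a holomorphic isomorphism $\ker M(x)\cong\ker D_x$. In particular $\dim\ker D_x=\dim\ker M(x)$, and since $b(x)$ is invertible, $D_x$ is invertible exactly when $M(x)$ is. Everything is thereby transported to the study of the holomorphic $(\dim C)\times(\dim K)$ matrix family $M(x)$.

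For the first assertion, observe that $\rank M(x)$ is lower semicontinuous, hence $\dim\ker M(x)=\dim K-\rank M(x)$ is upper semicontinuous; globally this makes $x\mapsto\dim\ker D_x$ upper semicontinuous, so its minimum $d_0$ over the connected manifold $X$ is attained on a nonempty open set and the locus $Y=\{x:\dim\ker D_x>d_0\}$ is closed. In each chart $Y$ is cut out by the vanishing of all minors of $M(x)$ of the generic rank size, hence is analytic; thus $Y$ is a proper analytic subset of $X$, its complement is open and dense, and the minimal kernel dimension $d_0$ is the generic value.

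For the second assertion assume $\dim X=1$, so that $Y$ is a discrete set of points. Fix an isolated $x_0\in Y$ with local coordinate $z$ vanishing at $x_0$. Over the discrete valuation ring $\C\{z\}$ of convergent power series the matrix $M(z)$ admits a Smith normal form $P(z)M(z)Q(z)=\operatorname{diag}(z^{a_1},\dots,z^{a_r},0,\dots,0)$ with $P,Q$ holomorphic and invertible at $z=0$ and $r=\rank M$ the generic rank. For $z\neq0$ the kernel of the diagonal matrix is the fixed coordinate subspace spanned by the last $\dim K-r=d_0$ vectors, so $\ker M(z)=Q(z)\cdot\Span(e_{r+1},\dots,e_{\dim K})$; the frame $Q(z)e_{r+1},\dots,Q(z)e_{\dim K}$ is holomorphic and pointwise independent up to and including $z=0$, and therefore defines the desired holomorphic extension of $\mathcal{K}=\ker M$ across $x_0$. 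Transporting through the holomorphic isomorphism $\ker M(x)\cong\ker D_x$, which remains holomorphic at $x_0$ because $b(x)$ stays invertible there, extends $\mathcal{K}_x=\ker D_x$ holomorphically through $Y$.

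Finally, since the Fredholm index is locally constant and equals $\dim K-\dim C$, the hypothesis of vanishing index gives $\dim K=\dim C$, so $M(x)$ is square and $D_x$ fails to be invertible precisely where the single holomorphic function $x\mapsto\det M(x)$ vanishes; thus the non--invertibility locus is locally the zero set of one holomorphic function, as used for the spectrum $\Spec(W)$. The only genuine work is the extension in the second assertion: the naive limit $\lim_{z\to0}\ker M(z)$ need not coincide with $\ker M(0)$, whose dimension jumps up at $x_0$, and it is exactly the Smith normal form over $\C\{z\}$ that produces a holomorphic frame realizing the correct $d_0$--dimensional limiting subspace and hence the holomorphic extension of the kernel bundle.
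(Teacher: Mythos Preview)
The paper does not actually prove this proposition; it is stated with a reference to Proposition~3.1 of \cite{BPP} and then used as a black box in the proofs of Theorems~\ref{th:spectral_curve_W} and~\ref{th:spectral_curve_f}. Your argument is a correct and standard proof of this well--known result: the Grushin/Schur complement reduction to the finite--dimensional map $M(x)\colon K\to C$ is valid because $b(x_0)$ is an isomorphism and therefore $b(x)$ stays invertible on a neighbourhood; the characterisation of $Y$ locally by vanishing of minors of $M$ gives analyticity; the Smith normal form of $M(z)$ over the discrete valuation ring $\C\{z\}$ produces the holomorphic frame $Q(z)e_{r+1},\dots,Q(z)e_{\dim K}$ extending the kernel bundle across an isolated point of $Y$; and in the index--zero case $M(x)$ is square so that $\det M(x)$ is the single local defining function. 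One small remark: when you write ``all minors of $M(x)$ of the generic rank size'' you should use the global minimum $d_0$ rather than the generic rank on the particular chart, since a neighbourhood of $x_0\in Y$ need not contain a point of minimal kernel; but the set $\{\rank M(x)<\dim K-d_0\}$ is in any case analytic, so the conclusion stands.
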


The proof of Theorem~\ref{th:spectral_curve_W} is divided in two parts: in
Part~1 we define a family $D_h$ of linear operators that depends
holomorphically on $h\in \Hom(\Gamma,\C_*)$ and has the property that the
kernel of $D_h$ is isomorphic to the space of holomorphic sections with
monodromy $h$ of $W$.  In Part~2 we investigate the asymptotics of $\Spec(W)$
under the assumption that the underlying discrete torus has regular
combinatorics. The minimal kernel dimension of the family of operators $D_h$
is then zero and the subset $Y=\Spec(W)$ of $X=\Hom(\Gamma,\C_*)$ is
non--empty and hence 1--dimensional.  It is an algebraic subset given by one
polynomial equation, because $D_h$ is a polynomial family of operators between
finite dimensional spaces of the same dimension.  The spectrum $\Spec(W)$ can
thus be normalized to a spectral curve $\Sigma$ of finite genus.  Applying the
proposition again to $D_h$ seen as a family of operators parametrized over the
1--dimensional manifold $\Sigma$ yields a holomorphic vector bundle
$\mathcal{L}\rightarrow \Sigma$ (possibly with rank depending on the connected
component) such that, for every $\sigma\in \Sigma$, the fiber
$\mathcal{L}_\sigma$ is contained in the space of holomorphic sections with
monodromy $h^\sigma$ with equality away from finitely many points.  Under the
genericity assumption \eqref{eq:genericity_assumption} we show, by
investigating the asymptotics of holomorphic sections for large monodromies,
that the minimal kernel dimension of $D_h$ on each component of $\Sigma$ is
one. The bundle $\mathcal{L}$ is thus a holomorphic line bundle.

Although the family of operators $D_h$ defined in Part~1 of the proof
immediately generalizes to discrete tori with arbitrary combinatorics, the
asymptotic analysis in Part~2 depends essentially on the assumption that the
torus has regular combinatorics: an important ingredient in Part~2 is an
adapted bases of the lattice and a compatible fundamental domain for discrete
tori with regular combinatorics.  This reduces the problem of finding
holomorphic sections with monodromy of $W$ to the study of eigenlines of a
family of operators that is polynomial in one complex variable.
The methods used in Part~2 of the proof of Theorem~\ref{th:spectral_curve_W}
are the main difference to the smooth case~\cite{BPP}, where the
1--dimensionality of $\Spec(W)$ and $\mathcal{L}$ is proven by asymptotic
comparison to the ``vacuum'' case of quaternionic holomorphic line bundles
obtained by doubling complex holomorphic line bundles.

The proof of Theorem~\ref{th:spectral_curve_f} is similar to the smooth
case~\cite{BLPP}: taking prolongations of the elements in the fibers of the
bundle $\mathcal{L}\rightarrow \Sigma$ yields an extrinsic realization of
$\Sigma$ as a family $\hat F\colon M'\times \Sigma \rightarrow \CP^3$ of
algebraic curves parametrized over $M'$.  The asymptotics of sections of
$\mathcal{L}$ for large monodromies shows that $\hat F$ extends to a family of
algebraic curves.

\textbf{Proof of Theorem~\ref{th:spectral_curve_W} -- Part~1:} For
quaternionic holomorphic line bundles $W$ over discrete tori with regular
combinatorics we define now a holomorphic family of operators $D_h$ depending
on $h\in \Hom(\Gamma,\C_*)$ with the property that the kernel of $D_h$ is
isomorphic to the space of holomorphic sections with monodromy $h$ of~$W$.
Recall that a discrete torus $M$ has regular combinatorics if all its vertices
have valence six or, equivalently, if $M$ is the quotient of the black and
white colored, equilateral triangulation of the plane by some lattice
$\Gamma\cong \Z^2$.

\begin{lemma}\label{lem:adapted_basis}
  Let $M$ be a discrete torus with regular combinatorics.  For each direction
  in the regular triangulation of the plane there is a positive basis
  $\gamma$, $\eta$ of the lattice $\Gamma$ with the property that $\gamma$
  is parallel to the edges corresponding to the direction and that the black
  triangles touching $\gamma$ lie on its positive side, see
  Figure~\ref{fig:good_basis}.
\end{lemma}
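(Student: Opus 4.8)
The plan is to pass to the universal cover and reduce the statement to an elementary fact about finite-index sublattices of the triangular lattice together with the fixed up/down coloring of the equilateral triangulation. Since $M$ has regular combinatorics, its universal cover is the equilateral triangulation of the plane; I would write $\Lambda=\Z e_1+\Z e_2$ for the vertex lattice, with $e_1$, $e_2$ and $e_3=e_2-e_1$ the three primitive edge vectors, and arrange that the chosen direction is (say) $e_1$. Then $M=\R^2/\Gamma$ for a finite-index sublattice $\Gamma\subset\Lambda$, and since every translation by a vector of $\Lambda$ carries upward triangles to upward triangles and downward to downward, the bi--coloring lifts to the coloring in which (after possibly renaming black and white) all black triangles point the same way. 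Along the line $\ell=\R e_1$ the triangulation has its edges lying on $\ell$, and each such edge borders exactly one upward and one downward triangle; hence the black triangles sharing an edge with $\ell$ are all of the same type and therefore lie in a single open half--plane bounded by $\ell$. These are precisely the black triangles ``touching $\gamma$''.

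First I would produce a lattice vector in the given direction. Because $\Gamma$ has finite index in $\Lambda$, the finite abelian group $\Lambda/\Gamma$ has some exponent $m$, so $m e_1\in\Gamma$; thus $\Gamma\cap\R e_1$ is a nontrivial rank--one sublattice, and I let $\gamma_0$ be a primitive generator. The same argument applied to $e_2$ and $e_3$ gives such a generator for each of the three directions. Next I would check that $\gamma_0$ is primitive in $\Gamma$: if $\gamma_0=kw$ with $w\in\Gamma$ and integer $k\geq2$, then $w$ is parallel to $e_1$, hence $w\in\Gamma\cap\R e_1=\Z\gamma_0$, forcing $w=\pm\gamma_0$, a contradiction. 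Consequently $\gamma_0$ extends to a basis of $\Gamma$.

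Then comes the orientation bookkeeping. I would set $\gamma=\pm\gamma_0$, choosing the sign so that the half--plane containing the black edge--bordering triangles of $\ell$ is the positive (left) side of the oriented vector $\gamma$; this is possible because replacing $\gamma$ by $-\gamma$ interchanges the two half--planes, since the positive side of $\gamma$ is $\{p:\det(\gamma,p)>0\}$. As $\gamma$ is primitive I may complete it to a basis $\gamma,\eta$ of $\Gamma$, and after replacing $\eta$ by $-\eta$ if necessary I may assume $\det(\gamma,\eta)>0$, i.e.\ that $\gamma,\eta$ is a positive basis. For a positive basis the vector $\eta$ lies on the positive side of $\gamma$, which by the choice of sign is exactly the side carrying the black triangles touching $\gamma$; so $\gamma,\eta$ has all three required properties, and repeating the construction for $e_2$ and $e_3$ finishes the proof.

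The only genuine content is the existence step, namely the exponent argument guaranteeing that $\Gamma$ meets each edge line $\R e_i$; everything else is formal. I do not expect a serious obstacle, and the one thing to get right is the sign bookkeeping: the freedom to reverse $\gamma$ is what lets me match the \emph{fixed} black side to the positive side, while the \emph{independent} freedom to reverse $\eta$ then restores positivity of the basis. These two choices do not interfere, so all three conditions can be met simultaneously.
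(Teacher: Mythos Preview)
Your proof is correct and follows essentially the same route as the paper's: find a primitive lattice vector in the given edge direction, then complete it to a positive basis via B\'ezout, with a sign choice to put the black triangles on the positive side. The paper is terser---it simply asserts that finiteness of $M$ gives a smallest such $\gamma$, then writes $\gamma=\tilde\gamma a+\tilde\eta b$ with $\gcd(a,b)=1$ and invokes B\'ezout---whereas you spell out the exponent argument for $\Gamma\cap\R e_i\neq 0$ and the geometric reason the black edge--bordering triangles lie in a single half--plane; but the logical skeleton is the same.
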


\begin{figure}[hbt]
  \centering
  \resizebox{10cm}{!}{\input{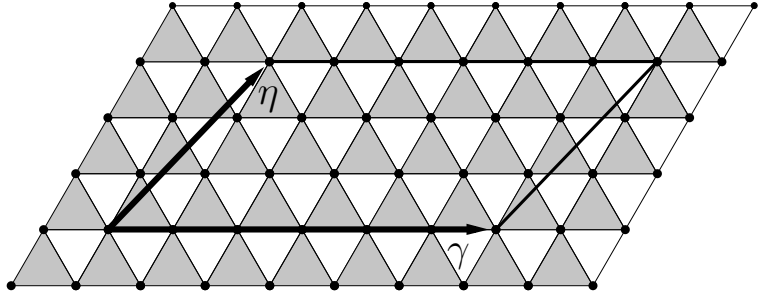}}
  \caption{An adapted basis of the lattice.}
  \label{fig:good_basis}
\end{figure}

\begin{proof} Because $M$ has only finitely many points, for each direction of
  the triangulation there is a unique smallest $\gamma\in \Gamma$ that is
  parallel to the corresponding edges of the triangles and has the property
  that black triangles touching $\gamma$ lie on its positive side. With
  respect to an arbitrary basis $\tilde \gamma$, $\tilde \eta$ the
  vector $\gamma$ takes the form $\gamma= \tilde \gamma a + \tilde
  \eta b$ with $a$, $b\in \Z$ relatively prime. Hence, there are $c$,
  $d\in \Z$ with $ad-bc=1$ such that $\gamma$ together with $\eta =
  \tilde\gamma c+ \tilde\eta d$ form a positive basis of the lattice.
  This proves that each of the three possible $\gamma$ can be complemented
  to a positive basis $\gamma$, $\eta$ of the lattice with $\eta$
  unique up to adding multiples of $\gamma$.
\end{proof}

\begin{definition}\label{def:adapted_basis}
  A basis $\gamma$, $\eta$ of the lattice with the properties of
  Lemma~\ref{lem:adapted_basis} is called \emph{adapted}.  An adapted basis is
  called \emph{normalized} if the angle between $\gamma$ and $\eta$ is smaller
  or equal $\pi/3$ and the angle between $\gamma$ and $\eta-\gamma$ is greater
  than $\pi/3$.  We call the number of points of a torus in the direction of
  $\gamma$ the \emph{length} $n$ of the torus with respect to the adapted
  basis $\gamma$, $\eta$ and the number of rows above $\gamma$ its
  \emph{thickness} $m$.
\end{definition}

The choice of an adapted basis $\gamma$, $\eta\in \Gamma$ fixes an isomorphism
$h\mapsto (h(\gamma),h(\eta))$ from $\Hom(\Gamma,\C_*)$ to $\C_*\times \C_*$.
This introduces coordinates $(\mu,\lambda)\in \C_*\times \C_*$ on
$\Hom(\Gamma,\C_*)$, where $\mu$ and $\lambda$ are the monodromies in the
$\gamma$ and $\eta$ directions, that is $\mu = h(\gamma)$ and $\lambda =
h(\eta)$.

In order to define the family of operators we fix now a normalized adapted
basis $\gamma$, $\eta$ of the lattice~$\Gamma$. Moreover, by choosing a vertex
$v_0$ of the universal covering of $M$, we fix a \emph{compatible fundamental
  domain} of $M$ consisting of all vertices of the regular triangulation that
are of the form $v_0 + t_1\gamma + t_2 \eta$ with $t_i\in [0,1[$ as indicated
by the fat black points in Figure~\ref{fig:def_spec}.
\begin{figure}[hbt]
  \centering
  \resizebox{10cm}{!}{\input{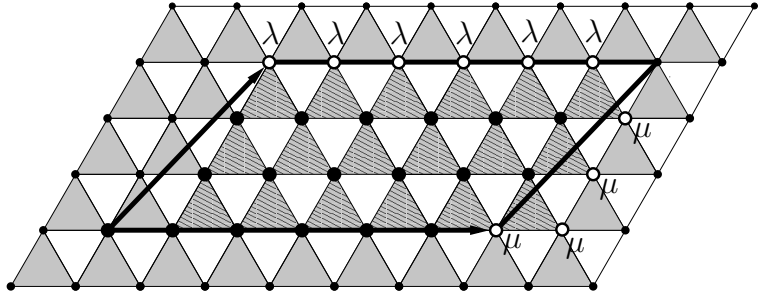}}
  \caption{Fundamental domain of a discrete torus $M$ with regular
    combinatorics.}
  \label{fig:def_spec}
\end{figure}

To reflect the dependence on these choices, our family of operators is denoted
by $D_{\mu,\lambda}$, where $(\mu,\lambda)\in \C_*\times \C_*$ are the
coordinates on $\Hom(\Gamma,\C_*)$ introduced by $\gamma$, $\eta$. The
operator $D_{\mu,\lambda}$ is defined on the direct sum of the lines $W$
over the fat black dots in Figure~\ref{fig:def_spec} (i.e., over the vertices
in the fundamental domain) and takes values in the trivial $\H$--bundle over
the shaded black triangles.  It is the composition of
\begin{itemize}
\item the complex (but not quaternionic) linear operator that maps a section
  of $W$ defined over the fat black dots to its unique extension as a section
  with monodromy corresponding to $(\mu,\lambda)\in \C_*\times \C_*\cong
  \Hom(\Gamma,\C_*)$ and
\item a non--trivial choice (fixed independent of $(\mu,\lambda)\in \C_*\times
  \C_*$) of quaternionic linear equations $W_p\oplus W_q \oplus
  W_r\rightarrow \H$ defining holomorphicity on the shaded black triangles
  $b=\{p,q,r\}$.
\end{itemize}
By definition of $D_{\mu,\lambda}$ we have:
\begin{lemma} \label{lem:fiber_of_L} The kernel of $D_{\mu,\lambda}$ is
  isomorphic to the space of holomorphic sections with monodromy $h$ of $W$,
  where $h\in \Hom(\Gamma,\C_*)$ is the multiplier whose coordinates with
  respect to the chosen adapted basis $\gamma$, $\eta$ are $(\mu,\lambda)\in
  \C_*\times \C_*$, i.e., $\mu=h(\gamma)$ and $\lambda=h(\eta)$.
\end{lemma}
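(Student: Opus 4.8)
The plan is to unwind the two-step definition of $D_{\mu,\lambda}$ and to show that each step is an isomorphism onto the relevant space, so that the kernel gets identified with the holomorphic sections of monodromy $h$. Throughout I would work on the universal covering $\tilde M$ of $M$, on which $\Gamma\cong\Z^2$ acts freely by deck transformations, using the compatible fundamental domain $F=\{v_0+t_1\gamma+t_2\eta : t_i\in[0,1)\}$ as a set of orbit representatives for the vertices.

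First I would analyze the extension map. Since $F$ meets every $\Gamma$--orbit of vertices exactly once and $\Gamma$ acts freely, each vertex $\tilde p\in\tilde M$ is uniquely of the form $\tilde p=\zeta\,p$ with $\zeta\in\Gamma$, $p\in F$. Given data $(\psi_p)_{p\in F}$ I set $\psi_{\tilde p}:=\psi_p\,h_\zeta$; because $h$ is a homomorphism into the abelian group $\C_*$, one checks $\psi_{\zeta'\tilde p}=\psi_{\tilde p}\,h_{\zeta'}$, so this is the unique section with monodromy $h$ restricting to the given data on $F$. Hence the extension map is a bijection from $\bigoplus_{p\in F}W_p$ onto the space of sections with monodromy $h$. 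It is complex linear, since complex scalars commute with the multipliers $h_\zeta\in\C$, but not quaternionic linear, precisely because $h_\zeta$ does not commute with a general quaternionic rescaling.

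Next I would treat the holomorphicity step, whose key ingredient is that the holomorphic structure is $\Gamma$--equivariant: since $W$ and its holomorphic structure descend to $M$, under the canonical identification $\tilde W_{\zeta b}\cong\tilde W_b$ the subspace $U_{\zeta b}$ is carried to $U_b$ for every black triangle $b$ and every $\zeta\in\Gamma$. If $\psi$ has monodromy $h$, then under this identification $\psi|_{\zeta b}=\psi|_b\,h_\zeta$, and as $U_b$ is a right quaternionic subspace invariant under right multiplication by the nonzero scalar $h_\zeta$, holomorphicity at $\zeta b$ is equivalent to holomorphicity at $b$. Therefore a section with monodromy $h$ is holomorphic on all of $\tilde M$ if and only if it is holomorphic on a set of representatives of black triangles modulo $\Gamma$, which is exactly the collection of shaded black triangles of Figure~\ref{fig:def_spec}; the quaternionic equations $W_p\oplus W_q\oplus W_r\to\H$ cutting out $U_b$ on these triangles form the second factor of $D_{\mu,\lambda}$.

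Combining the two steps, an element of $\ker D_{\mu,\lambda}$ is precisely a datum on $F$ whose monodromy--$h$ extension lies in $U_b$ over every shaded black triangle, hence -- by the equivariance just established -- in $U_b$ over \emph{all} black triangles; under the extension isomorphism this is exactly the space of holomorphic sections with monodromy $h$, which proves the lemma. I expect the only point needing genuine care to be the combinatorial bookkeeping: verifying that the shaded triangles constitute a complete and irredundant set of orbit representatives (equivalently that there are $|\mathcal{B}|=nm$ of them, matching the $|\mathcal{V}|=nm$ vertices of $F$), and that every vertex of a shaded triangle is reached from $F$ by a single application of $\gamma^{\pm}$ or $\eta^{\pm}$, so that the holomorphicity equations are indeed expressed through the coordinates $\mu=h(\gamma)$ and $\lambda=h(\eta)$.
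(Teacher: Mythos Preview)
Your proposal is correct and matches the paper's approach: the paper states this lemma with no proof beyond the phrase ``By definition of $D_{\mu,\lambda}$ we have,'' and what you have written is precisely the unwinding of that definition --- the extension map is a bijection onto sections with monodromy $h$, and $\Gamma$--equivariance of the holomorphic structure reduces the holomorphicity conditions to the set of shaded black triangles. Your closing caveat about the combinatorial bookkeeping (that the shaded triangles form a fundamental domain for $\mathcal{B}$ and that their vertices outside $F$ are reached by a single $\gamma$ or $\eta$) is exactly the content encoded in Figure~\ref{fig:def_spec}.
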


The family $D_{\mu,\lambda}$ of complex linear operators is polynomial (of
order one) in $\mu$ and $\lambda$, because it only involves the extension of
the section over the fat black points in Figure~\ref{fig:def_spec} to the fat
white points (which is obtained by multiplication with $\mu$ or $\lambda$ from
the values of the section at $\Gamma$--related fat black points, as indicated
by the $\mu$'s and $\lambda$'s in Figure~\ref{fig:def_spec}).  The operators
$D_{\mu,\lambda}$ have index $\Ind{(D_{\mu,\lambda})}=0$, because they are
operators between finite dimensional complex vector spaces of the same
dimension $2|\mathcal{V}|=2|\mathcal{B}|$, see~\eqref{eq:torus_vbw}.  In the
coordinates $(\mu,\lambda)\in \C_*\times \C_*$, the spectrum is thus an
algebraic set $\Spec(W)\subset \Hom(\Gamma,\C_*)$ given as the vanishing locus
of one polynomial function, the determinant of the family of operators
$D_{\mu,\lambda}$.

The fact that the determinant of the family $D_{\mu,\lambda}$ is polynomial is
an essential difference to the smooth theory where one has to deal with a
holomorphic family of Fredholm operators whose determinant is transcendental.
As a consequence, the spectral curve of immersed discrete tori is always a
Riemann surface of finite genus while for immersions of smooth tori it can
have infinite genus.

\textbf{Proof of Theorem~\ref{th:spectral_curve_W} -- Part~2:} We investigate
now the asymptotics of the spectrum $\Spec(W)$ and of the spaces of
holomorphic sections for large monodromies $h\in \Spec(W)$.  Compared to the
smooth case, the ``asymptotic analysis'' is simplified significantly by the
fact that the polynomial family of operators $D_{\mu,\lambda}$ depending on
$(\mu,\lambda)\in \C_*\times \C_*$ immediately extends to $\C\times \C$.  For
understanding the asymptotics it turns out to be sufficient to study this
extension along the line $(\mu,\lambda)\in \C\times \{0\}$.


Because the restriction of a holomorphic section to a black triangle is
uniquely determined by its value on two of the vertices, a section in the
kernel of $D_{\mu,\lambda}$ is uniquely determined by its values on the lowest
row of fat black points in Figure~\ref{fig:def_spec}. It is helpful to see the
black triangles touching $\gamma$ as arrows pointing in the propagation
direction for the ``evolution'' of ``initial data'' of holomorphic sections.
In fact, the values of a holomorphic section in $\ker(D_{\mu,\lambda})$ over
the fat black points in Figure~\ref{fig:def_spec} can be obtained recursively
from the values on the lowest row of fat black points by successive extension
to the row above: for fixed $\mu\in \C$, there is a complex linear operator
$T_0(\mu)$ mapping the initial data given on the lowest row (together with its
extension to the white points in the lowest row by multiplication with $\mu$)
to the direct sum of the fibers of $W$ over the fat black points in the row
above such that the resulting section on the vertices of the lowest row of
shaded black triangles is holomorphic.  There is an operator $T_1(\mu)$
mapping this data again to the row above etc. Finally, for $m$ the thickness
of the torus with respect to the adapted basis $\gamma$, $\eta$, there is an
operator $T_{m-1}(\mu)$ mapping the upper row of fat black points to the row
of white points and then, under the identification with respect to the lattice
vector $\eta$, to the lowest row of fat black points. Hence, for every $\mu\in
\C$ we obtain a complex linear endomorphism
\[ H(\mu)= T_{m-1}(\mu)\cdot T_{m-2}(\mu)\cdot\cdot\cdot T_1(\mu)\cdot
T_0(\mu) \] of the direct sum of the fibers of $W$ over the fat black points
in the lowest row of Figure~\ref{fig:def_spec}. By construction, the operator
$D_{\mu,\lambda}$ has a non--trivial kernel if and only if $\lambda$ is an
eigenvalue of the endomorphism $H(\mu)$.  The spectrum of $W$ is thus given by
the set of $(\lambda,\mu)\in \C_*\times \C_*$ for which
$\det(\lambda-H(\mu))=0$. In particular, it is a branched covering of the
$\mu$--plane $\C_*$ with the points corresponding to eigenvalue $\lambda=0$
removed.  This proves the following lemma.
\begin{lemma}\label{lem:one_dim_analytic}
  If $W$ is quaternionic holomorphic line bundle over a discrete torus $M$
  with regular combinatorics, the algebraic subset $\Spec(W)\subset
  \Hom(\Gamma,\C_*)\cong \C_*\times \C_*$ is 1--dimensional such that it is
  possible to define the spectral curve $\Sigma$ of $W$ as the normalization
  of its spectrum $\Spec(W)$. The number of connected components of $\Sigma$
  is at most $2\cdot n_{max}$, where $n_{max}$ denotes the maximal length of
  $M$, cf.\ Definition~\ref{def:adapted_basis}.
\end{lemma}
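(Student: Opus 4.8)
The plan is to read everything off the monodromy operator $H(\mu)$ constructed in the paragraph preceding the lemma. Fix a normalized adapted basis $\gamma$, $\eta$ in one of the three directions, with length $n$ (so that the lowest row of the compatible fundamental domain carries $n$ vertices) and thickness $m$. Recall that $H(\mu)=T_{m-1}(\mu)\cdots T_0(\mu)$ is a complex--linear endomorphism of the complex $2n$--dimensional space $\bigoplus_p W_p$, where $p$ ranges over the lowest row and each quaternionic line $W_p$ is complex $2$--dimensional; it depends polynomially on $\mu$, and $D_{\mu,\lambda}$ has non--trivial kernel exactly when $\lambda$ is an eigenvalue of $H(\mu)$. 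Hence
\[
\Spec(W)=\{(\mu,\lambda)\in\C_*\times\C_*\mid \det(\lambda-H(\mu))=0\}.
\]

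First I would settle one--dimensionality. The characteristic polynomial $p(\mu,\lambda)=\det(\lambda-H(\mu))$ is monic of degree $2n$ in $\lambda$ with coefficients polynomial in $\mu$; in particular it is a nonzero element of $\C[\mu,\lambda]$, so its vanishing locus in $\C\times\C$ is a plane algebraic curve of pure dimension one. Intersecting with $\C_*\times\C_*$ yields $\Spec(W)$, which is therefore at most one--dimensional, and it is exactly one--dimensional since the projection to the $\mu$--line is surjective onto $\C$ with fibres the eigenvalues of $H(\mu)$, which are generically nonzero. Consequently $\Spec(W)$ admits a normalization $\Sigma$ which is a Riemann surface of finite genus, and the spectral curve is defined.

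For the component count I would exploit the projection $\pi\colon(\mu,\lambda)\mapsto\mu$. Because $p$ is monic in $\lambda$, the affine curve $\tilde V=\{p=0\}\subset\C\times\C$ is finite over the $\mu$--line, so $\tilde V\to\C$ is a branched covering of degree $2n$; normalizing does not change the generic fibre, so the normalization $\bar\Sigma\to\C$ is again a degree $2n$ branched covering of the connected base $\C$. Each connected component of $\bar\Sigma$ then maps onto $\C$ with positive degree, and since the degrees sum to $2n$ there are at most $2n$ of them. The spectral curve $\Sigma$ is the normalization of $\Spec(W)=\tilde V\cap(\C_*\times\C_*)$, i.e.\ it is obtained from $\bar\Sigma$ by deleting the closed algebraic preimage of the two coordinate axes; deleting points, or even entire components, can only decrease the number of connected components, so $\Sigma$ has at most $2n$ of them as well. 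Since $n\le n_{max}$ for the adapted basis of any direction, this gives at most $2n\le 2\cdot n_{max}$ components, as claimed.

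The argument is short precisely because the construction of $H(\mu)$ has already done the real work; the statement is essentially labelled as proved by that discussion. The only delicate points---and hence the main obstacle---are the two bookkeeping issues in the covering step: normalization may split a connected nodal curve into several pieces, which is why I apply the degree bound directly to $\bar\Sigma$ rather than to $\Spec(W)$ itself, and the passage from $\bar\Sigma$ to $\Sigma$ must remove the coordinate axes, which is harmless because deletion never raises the component count. The feature that makes all of this work---polynomiality and monicity of $\det(\lambda-H(\mu))$ in $\lambda$---is exactly what fails in the smooth theory, where the analogous determinant is transcendental and the spectral curve may have infinite genus.
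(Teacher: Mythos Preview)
Your proof is correct and follows essentially the same approach as the paper: the paper simply notes that $\Spec(W)=\{\det(\lambda-H(\mu))=0\}\cap(\C_*\times\C_*)$ is a branched covering of the $\mu$--plane $\C_*$ and declares the lemma proven, while you spell out why monicity in $\lambda$ forces the curve to be one--dimensional and why the covering degree $2n$ bounds the number of components. One small remark: your assertion that the eigenvalues of $H(\mu)$ are generically nonzero amounts to $H(\mu)$ being generically invertible, which follows because each $T_i(\mu)$ is invertible away from the finitely many horizontal holonomies---this is made explicit only in the proof of Lemma~\ref{lem:number_points}, so strictly speaking you are borrowing a fact that appears slightly later, but the paper's own terse justification does the same.
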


The following two lemmas are essential for understanding the asymptotic
behavior of holomorphic sections near the ends $\bar \Sigma\backslash \Sigma$
of $\Sigma$.

\begin{lemma}\label{lem:infinity}
  Let $\Sigma$ be the spectral curve of a quaternionic holomorphic line bundle
  over a discrete torus with regular combinatorics.  For each end $\infty\in
  \bar\Sigma \backslash\Sigma$ of the spectral curve $\Sigma$, there is a
  unique normalized adapted basis $\gamma$, $\eta$ of the lattice $\Gamma$ for
  which the meromorphic functions $\mu = h(\gamma)$ and $\lambda=h(\eta)$
  satisfy
  \[ \mu(\infty)\in \C_* \qquad \textrm{ and } \qquad \lambda(\infty)=0.\]
  With respect to the other two normalized adapted bases, the corresponding
  meromorphic function satisfies $\mu(\infty)=\infty$ and $\mu(\infty)=0$,
  respectively.
\end{lemma}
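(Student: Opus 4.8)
The plan is to read the behaviour of $\mu=h(\gamma)$ and $\lambda=h(\eta)$ at the ends off the transfer--operator description of $\Spec(W)$, and then to pass between the three normalized adapted bases by the monomial change of coordinates they induce on $\Hom(\Gamma,\C_*)$. Fix one of the three directions together with its unique normalized adapted basis $\gamma,\eta$, so that by the discussion preceding Lemma~\ref{lem:one_dim_analytic} the spectrum is the zero locus of $P(\mu,\lambda)=\det(\lambda-H(\mu))$ and $\Sigma$ normalizes the resulting branched cover of the $\mu$--plane. As $H(\mu)$ is a polynomial family of endomorphisms of the finite--dimensional space of initial data along the lowest row, every end $\infty\in\bar\Sigma\setminus\Sigma$ is a point over which $(\mu,\lambda)$ leaves $\C_*\times\C_*$, and the only possibilities are $\mu(\infty)\in\{0,\infty\}$ or a finite $\mu(\infty)=\mu_0\in\C_*$ at which $0$ is an eigenvalue of $H(\mu_0)$, forcing $\lambda(\infty)=0$ (the value $\lambda(\infty)=\infty$ cannot occur at finite $\mu$). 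Thus each end corresponds to an edge of the Newton polygon $\mathcal N(P)$, its outer normal recording which of $\mu,\lambda$ tends to $0$ or $\infty$, and the desired behaviour $\mu(\infty)\in\C_*$, $\lambda(\infty)=0$ is exactly that attached to the bottom edge of $\mathcal N(P)$ of minimal $\lambda$--degree.

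Next I would record how the three bases interact. They are bases of the same lattice $\Gamma$, hence related by $GL_2(\Z)$, and under $h\mapsto(h(\gamma),h(\eta))$ a change of basis acts on $(\mu,\lambda)$ by monomials. The three oriented directions singled out by ``black triangles on the positive side'' are mutually at angle $2\pi/3$, so writing the $\gamma'$ of another direction as $\gamma'=a\gamma+b\eta$ gives $\mu'=h(\gamma')=\mu^{a}\lambda^{b}$, and the normalization conditions of Definition~\ref{def:adapted_basis} (angle $\le\pi/3$ between $\gamma$ and $\eta$) fix the sign of $b$: the direction $2\pi/3$ counter--clockwise from $\gamma$ has $b>0$ while the one $2\pi/3$ clockwise has $b<0$. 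Consequently, at an end where $\mu\to\mu_0\in\C_*$ and $\lambda\to 0$ in the fixed basis, the counter--clockwise neighbour gives $\mu'=\mu_0^{a}\lambda^{b}\to 0$ and the clockwise neighbour gives $\mu''\to\infty$. This yields the stated values $\mu(\infty)=0$ and $\mu(\infty)=\infty$ for the other two bases, and simultaneously gives uniqueness: a basis realizing $\mu(\infty)\in\C_*$ must be the distinguished one, since the two neighbours return $0$ and $\infty$.

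It remains to prove existence, i.e.\ that every end is the bottom--edge end for exactly one of the three directions. Equivalently, the outer normals occurring in $\mathcal N(P)$ must be only the three that the three changes of basis carry to $(0,-1)$, so that these changes distribute the edges of $\mathcal N(P)$ with each becoming the bottom edge for precisely one direction. This is where the asymptotics of $H(\mu)$ enter and is the main obstacle: one must determine the leading $\mu$--behaviour of the eigenvalues of $H(\mu)$ as $\mu\to 0$ and $\mu\to\infty$. The propagation picture guides this computation, since crossing one row in the $\eta$--direction multiplies by a definite power of $\mu$ prescribed by the arrows (the black triangles touching $\gamma$), so that across the thickness $m$ an eigenvalue $\lambda$ of $H(\mu)$ scales like a fixed power $\mu^{s}$ whose admissible slopes $s$ are exactly those matching the three edge--directions of the triangulation; this pins down the edges of $\mathcal N(P)$ and excludes any further ends. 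Geometrically, at such an end the holomorphic section with monodromy concentrates as a discrete plane wave along one edge direction and decays transverse to it; taking that direction produces the basis with $\lambda(\infty)=0$ and $\mu(\infty)\in\C_*$, which completes the proof.
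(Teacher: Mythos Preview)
Your Newton--polygon framing is coherent and the ``uniqueness'' half is essentially correct: once some normalized adapted basis gives $\mu(\infty)\in\C_*$ and $\lambda(\infty)=0$, the monomial action of the base change indeed forces the other two $\mu$--coordinates to $0$ and $\infty$. The genuine gap is existence. You yourself flag it as ``the main obstacle'' and then offer only a heuristic about plane--wave concentration and admissible slopes of $\mathcal N(P)$; nothing in your argument excludes an end at which, in every one of the three bases, the pair $(\mu,\lambda)$ tends to a boundary point with $\mu\in\{0,\infty\}$ so that no $\mu_i(\infty)$ lands in $\C_*$. Determining the extremal edges of $\mathcal N(P)$ really does require a computation you have not done.

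The paper closes this gap by a much shorter and more robust device that sidesteps the Newton polygon entirely. Since the three primitive vectors $\gamma_1,\gamma_2,\gamma_3$ along the edge directions satisfy a positive integer relation $l_1\gamma_1+l_2\gamma_2+l_3\gamma_3=0$, the corresponding functions obey $\mu_1^{l_1}\mu_2^{l_2}\mu_3^{l_3}=1$ on $\Sigma$; at an end not all $\mu_i$ can lie in $\C_*$, so at least one is $0$ and one is $\infty$. The only case to exclude is that the remaining $\mu_i$ also lies in $\{0,\infty\}$. Using the angle normalization of $\eta_i$, the paper shows this would force $(\mu_i,\lambda_i)\to(0,0)$ in one of the bases, and then rules out $(0,0)$ by the single concrete fact that $D_{0,0}$ is injective: when the angle between $\gamma$ and $\eta$ equals $\pi/3$ the first row--transfer operator $T_0(0)$ already has no kernel, and when the angle is smaller one passes to the sublattice $\tilde\Gamma=\Span_\Z\{\gamma_1,\gamma_2\}$, where the angle is $\pi/3$, and embeds $\Sigma$ into the spectral curve of the cover. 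This replaces your uncomputed asymptotics of $H(\mu)$ by the elementary observation $\ker D_{0,0}=0$, and is what your argument is missing.
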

\begin{proof} Denote by $\gamma_i$, $\eta_i$, $i=1$,...,$3$ the three possible
  normalized adapted bases with numbering as in Figure~\ref{fig:three_basis}.
  \begin{figure}[hbt]
    \centering \resizebox{3cm}{!}{\input{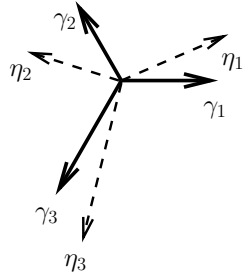}}
    \caption{The three normalized adapted bases $\gamma_i$, $\eta_i$,
      $i=1,...,3$ of the lattice.}
  \label{fig:three_basis}
  \end{figure}

  There are $l_1$, ..., $l_3\in \N$ such that $l_1\gamma_1+ l_2 \gamma_2 +
  l_3\gamma_3=0$. Therefore, the meromorphic functions $\mu_i=h(\gamma_i)$ on
  $\Sigma$ describing the monodromies in direction of $\gamma_i$ satisfy
  \begin{equation}
    \label{eq:three_monodromies}
    \mu_1^{l_1}\cdot \mu_2^{l_2} \cdot \mu_3^{l_3}  = 1.
  \end{equation}
  Because $\infty$ is a point at infinity, not all $\mu_i(\infty)$,
  $i=1$,...,$3$ can be in $\C_*$ and, by \eqref{eq:three_monodromies}, at
  least one of the $\mu_i(\infty)$, $i=1$,...,$3$ has to be zero and one has
  to be infinity.  Without loss of generality we can assume that
  $\mu_1(\infty)=\infty$ and
\begin{itemize}
\item $\mu_2(\infty)\in \C_*$ (Case~1) or
\item $\mu_2(\infty)=0$ (Case~2).
\end{itemize}
In Case~1, we obtain that $\lambda_2(\infty)=0$ and $\mu_3(\infty)=0$ while
$\lambda_3(\infty)=0$ or $\lambda_3(\infty)\in \C_*$ (depending on whether the
angle between $\gamma_3$ and $\eta_3$ is smaller or equal $\pi/3$).  This
proves the statement for Case~1.

To complete the proof we show that Case~2 is impossible as it implies
$\lambda_2(\infty)=0$ . This is obvious if the angle between $\gamma_2$ and
$\eta_2$ is $\pi/3$, because then the operator $T_0(0)$ from Part~2 defined
with respect to $\gamma_2$, $\eta_2$ has no kernel and thus $D_{0,0}$ has no
kernel.  If the angle between $\gamma_2$ and $\eta_2$ is smaller than $\pi/3$,
we pass to the discrete torus $\tilde M$ obtained as quotient of the regular
triangulation by $\tilde \Gamma=\Span_\Z\{ \gamma_1,\gamma_2\}$ for which
$\tilde \gamma_2= \gamma_2$, $\tilde \eta_2=-\gamma_1$ is a normalized adapted
basis.  Because holomorphic sections with monodromy of $W$ give rise to
holomorphic sections with monodromy of the pullback $\tilde W$ of $W$ to the
covering $\tilde M$ of $M$, the spectral curve $\Sigma$ of $W$ embeds into
that of $\tilde W$.  With respect to $\tilde \gamma_1$ and $\tilde \eta_2$,
the image of the point $\infty$ under this embedding has coordinates $0=\tilde
\mu = \tilde h(\tilde \gamma_1)$ and $0=\tilde \lambda = \tilde h(\tilde
\eta_2)$.  But this is impossible by the above argument, because the angle
between $\tilde \gamma_1$ and $\tilde \eta_2$ is~$\pi/3$.
\end{proof}

\begin{lemma} \label{lem:number_points} Let $W$ be a quaternionic holomorphic
  line bundle over a discrete torus $M$ with regular combinatorics. Counted
  with multiplicities there are $2m$ points $(\mu,0)\in \C_*\times \C$ for
  which $D_{\mu,0}$ has a non--trivial kernel, where $m$ is the thickness of
  the torus with respect to the adapted basis chosen to define
  $D_{\mu,\lambda}$.
\end{lemma}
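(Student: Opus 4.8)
The plan is to count the zeros of the determinant of the holonomy $H(\mu)$ and to reduce this to a product of $m$ two-dimensional eigenvalue problems, one for each row of black triangles. By the discussion preceding the lemma, $D_{\mu,0}$ has non-trivial kernel if and only if $\lambda=0$ is an eigenvalue of $H(\mu)$, that is if and only if $\det H(\mu)=0$; I take the multiplicity of such a point $(\mu,0)$ to be the order of vanishing of $\det H$ at $\mu$. Writing $H(\mu)=T_{m-1}(\mu)\cdots T_0(\mu)$ as above, one has $\det H(\mu)=\prod_{k=0}^{m-1}\det T_k(\mu)$, so it suffices to show that each factor $\det T_k(\mu)$ is a polynomial in $\mu$ with exactly two zeros, both lying in $\C_*$.

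To analyze a single factor I would label the fat black points of row $k$ by $p_0,\dots,p_{n-1}$ and those of row $k+1$ by $q_0,\dots,q_{n-1}$, where $n$ is the length of the torus, and record that the upward black triangle $b_i=\{p_i,p_{i+1},q_i\}$ carries a two-dimensional space $U_{b_i}$. Since evaluation at any two of the three vertices of $b_i$ is an isomorphism onto $W_{p_i}\oplus W_{p_{i+1}}$, the holomorphicity equation on $b_i$ expresses $\psi_{q_i}=A_i\psi_{p_i}+B_i\psi_{p_{i+1}}$ for quaternionic-linear maps $A_i,B_i$, while the horizontal monodromy is encoded by $\psi_{p_n}=\psi_{p_0}\mu$ (see Figure~\ref{fig:def_spec}). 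The key structural point, which is where Definition~\ref{def:disc_hol} enters, is that each $A_i\colon W_{p_i}\to W_{q_i}$ and each $B_i\colon W_{p_{i+1}}\to W_{q_i}$ is an isomorphism: a section of $U_{b_i}$ vanishing at $p_{i+1}$ and $q_i$ (respectively at $p_i$ and $q_i$) must, by the defining property of a holomorphic structure, vanish at the third vertex as well, forcing $A_i$ (respectively $B_i$) to be injective, hence invertible between the complex-two-dimensional fibres.

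With this in hand the vanishing of $\det T_k(\mu)$ becomes a two-dimensional eigenvalue problem: a nonzero element of $\ker T_k(\mu)$ is exactly a solution of $A_i\psi_{p_i}+B_i\psi_{p_{i+1}}=0$ for all $i$ together with the closing condition $\psi_{p_n}=\psi_{p_0}\mu$. Solving the recursion as $\psi_{p_{i+1}}=-B_i^{-1}A_i\psi_{p_i}$ and going once around the $\gamma$-direction shows this is possible precisely when $\mu$ is an eigenvalue of the complex-linear endomorphism $S=(-1)^nB_{n-1}^{-1}A_{n-1}\cdots B_0^{-1}A_0$ of the fibre $W_{p_0}$. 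As a product of isomorphisms $S$ is invertible, so its characteristic polynomial has degree two with two nonvanishing roots; comparing with $\det T_k(0)=\prod_i\det A_i\neq 0$ identifies $\det T_k(\mu)$, up to the nonzero constant $\prod_i\det B_i$, with the characteristic polynomial of $S$, whence it has exactly two zeros, both in $\C_*$ and with the correct multiplicity. Taking the product over the $m$ rows yields the asserted $2m$ points, and the nonvanishing of the constant term $\prod_k\prod_i\det A_i$ of $\det H(\mu)$ confirms that no zero escapes to $\mu=0$. I expect the only delicate step to be the invertibility of the $A_i$ and $B_i$: it is exactly this consequence of Definition~\ref{def:disc_hol} that keeps $S$ invertible and so prevents zeros from degenerating to $0$ or $\infty$, and that makes the picture of rows of black triangles as propagation arrows well defined.
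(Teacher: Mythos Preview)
Your proof is correct and is essentially the same argument as the paper's, just carried out more explicitly. Your recursion $\psi_{p_{i+1}}=-B_i^{-1}A_i\psi_{p_i}$ along row $k$ is exactly the parallel transport of what the paper calls the quaternionic connection on that row, your operator $S$ is the paper's ``horizontal holonomy'', and your two nonzero complex eigenvalues of $S$ are the paper's $\mu_k,\bar\mu_k$; the invertibility of the $A_i,B_i$ that you extract from Definition~\ref{def:disc_hol} is precisely why these holonomies lie in $\C_*$.
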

\begin{proof}
  The operator $D_{\mu,0}$ has a non--trivial kernel if and only if one of the
  operators $T_i(\mu)$, $i=0$, ..., $m-1$ has a non--trivial kernel.  For
  $\mu=0$, only $T_0(\mu)$ can have a kernel: the sections with support in the
  fat black points of the lowest row that are not contained in one of the
  shaded black triangles.  For $\mu\in \C_*$, the operator $T_i(\mu)$ has a
  non--trivial kernel if there is a holomorphic section defined on the
  vertices of the $i^{th}$ row of shaded black triangles that vanishes on all
  upper vertices of the triangles and has horizontal monodromy~$\mu$.  Because
  on every black triangle there is a quaternionic 1--dimensional space of
  holomorphic sections vanishing on the upper vertex, such holomorphic section
  is parallel with respect to a quaternionic connection on the restriction of
  $W$ to the vertices in the $i^{th}$ row and $\mu$, $\bar \mu$ are the
  complex eigenvalues of the quaternionic holonomy of this connection, in the
  following called \emph{horizontal holonomies}. In other words, the operator
  $T_i(\mu)$ with $\mu\in \C_*$ is invertible if and only if $\mu$ is not one
  of the horizontal holonomies of the quaternionic connection on the
  restriction of $W$ to the $i^{th}$ row of fat black points induced by the
  holomorphic structure.
\end{proof}

Lemma~\ref{lem:infinity} shows that the ends $\bar \Sigma\backslash\Sigma$ of
$\Sigma$ are divided into three different types corresponding to the three
different directions of the regular triangulation on the torus: for each of
the three normalized adapted bases $\gamma$, $\eta$ one type of ends
$\infty\in \bar \Sigma\backslash\Sigma$ is characterized by the property that
the meromorphic functions $\mu=h(\gamma)$ and $\lambda=h(\gamma)$ take values
$\mu(\infty)\in \C_*$ and $\lambda(\infty)=0$ while the other two types are
characterized by $\mu(\infty)=0$ and $\mu(\infty)=\infty$, respectively.  The
meromorphic function $\mu$ is a branched covering (see above) and hence
non--constant on the components of $\bar \Sigma$ such that each component
contains at least one end of every type.

The family $D_{\mu,\lambda}$, $(\mu,\lambda)\in\C_*\times \C_*$ of operators
defined by the choice of an adapted basis and a compatible fundamental domain
extends to $(\mu,\lambda)\in \C\times \C$ and the first type of ends is mapped
to those $(\mu,0)\in \C_*\times\{0\}$ for which $D_{\mu,0}$ has a non--trivial
kernel.  Under the \textbf{genericity assumption} that the horizontal
holonomies (see the proof of Lemma~\ref{lem:number_points})
\begin{equation}
  \label{eq:genericity_assumption}
  \mu_0, \bar \mu_0, ...,
  \mu_{m-1}, \bar\mu_{m-1} \textrm{ are mutually distinct for every normalized
    adapted basis,}
\end{equation}
the horizontal holonomies are in 1--1--correspondence to the ends of the
respective type. This shows:

\begin{cor}\label{cor:number_of_infinities}
  Let $\Sigma$ be the spectral curve of a holomorphic line bundle $W$ over a
  discrete torus with regular combinatorics for which
  \eqref{eq:genericity_assumption} is satisfied.  The number of ends
  $\bar\Sigma\backslash\Sigma$ of $\Sigma$ is $2(m_1+m_2+m_3)$, where $m_i$
  denotes the thickness of the torus with respect to the normalized adapted
  basis $\gamma_i$, $\eta_i$. Every connected component of $\bar \Sigma$
  contains at least three ends, one corresponding to every direction of the
  triangulation.
\end{cor}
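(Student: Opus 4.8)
The plan is to assemble the two preceding lemmas with the genericity assumption, treating the corollary's two assertions separately. Most of the real work has already been done in Lemmas~\ref{lem:infinity} and~\ref{lem:number_points}, so the proof is essentially a count.

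First I would count the ends. By Lemma~\ref{lem:infinity} every end $\infty\in\bar\Sigma\backslash\Sigma$ carries a well-defined \emph{type} among the three directions of the triangulation: it is of type $i$ precisely when, with respect to the normalized adapted basis $\gamma_i$, $\eta_i$, the meromorphic functions satisfy $\mu_i(\infty)\in\C_*$ and $\lambda_i(\infty)=0$. Fixing the direction $i$ and the associated family $D_{\mu,\lambda}$, the ends of type $i$ are exactly the degenerations of the extended family over $\C\times\C$ along $\{\lambda=0\}$, i.e.\ the points $(\mu_*,0)\in\C_*\times\{0\}$ at which $D_{\mu_*,0}$ has non-trivial kernel. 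By Lemma~\ref{lem:number_points} there are, counted with multiplicity, exactly $2m_i$ such points, namely the horizontal holonomies $\mu_0,\bar\mu_0,\dots,\mu_{m_i-1},\bar\mu_{m_i-1}$ of the rows of the torus in the direction $\gamma_i$. The genericity assumption \eqref{eq:genericity_assumption} states precisely that these $2m_i$ holonomies are mutually distinct, so each occurs with multiplicity one and contributes a single end; hence there are exactly $2m_i$ ends of type $i$. Summing over the three directions gives $2(m_1+m_2+m_3)$ ends in total.

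For the second assertion I would use that each monodromy function $\mu_i=h(\gamma_i)$ is, by the branched-covering description established before Lemma~\ref{lem:one_dim_analytic}, a non-constant meromorphic function on $\bar\Sigma$, and in fact non-constant on every connected component $C$ of $\bar\Sigma$, since a branched covering restricts to a covering of positive degree on each component. A non-constant meromorphic function on the compact Riemann surface $C$ has at least one zero and at least one pole, and since $\mu_i$ takes values only in $\C_*$ on $\Sigma$ itself, these zeros and poles lie among the ends $C\cap(\bar\Sigma\backslash\Sigma)$. By Lemma~\ref{lem:infinity}, at an end not of type $i$ the value $\mu_i$ is either $0$ or $\infty$, and the two cases $\mu_i=0$ and $\mu_i=\infty$ distinguish the two remaining types. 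Thus a zero of $\mu_i$ on $C$ and a pole of $\mu_i$ on $C$ furnish ends of the two types other than $i$; letting $i$ range over all three directions shows that $C$ contains at least one end of each of the three types, hence at least three ends.

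The divisor count and the bookkeeping of types are routine. The main obstacle is the first step of the count: establishing cleanly that, under \eqref{eq:genericity_assumption}, each horizontal holonomy contributes exactly one end rather than several. This requires the local structure of the normalization $h\colon\bar\Sigma\rightarrow\overline{\Spec(W)}$ near a point $(\mu_*,0)$ with $\lambda=0$, namely that a multiplicity-one degeneration of $D_{\mu,\lambda}$ along $\lambda=0$ corresponds to a single smooth branch of the spectrum approaching $\lambda=0$ and hence to a single point of $\bar\Sigma$. This is the content of the $1$--$1$ correspondence asserted in the paragraph preceding the statement, and the genericity hypothesis is exactly what rules out higher-order coincidences among the $\mu_k$, $\bar\mu_k$ that would otherwise merge several branches into one end.
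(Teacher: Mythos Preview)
Your proposal is correct and follows essentially the same approach as the paper: the argument in the text immediately preceding the corollary classifies the ends into three types via Lemma~\ref{lem:infinity}, counts $2m_i$ ends of each type via Lemma~\ref{lem:number_points} together with the genericity assumption \eqref{eq:genericity_assumption}, and deduces that every component contains ends of all three types from the non-constancy of the branched covering $\mu$ on each component. Your version is somewhat more explicit, particularly in spelling out why zeros and poles of $\mu_i$ pick out the two types other than $i$ and why this forces all three types onto every component; the paper compresses this into a single sentence.
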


The assumption \eqref{eq:genericity_assumption} implies that the kernel of
$D_{\mu,0}$ is 1--dimensional if $\mu$ is one of the horizontal holonomies. In
particular, by Proposition~\ref{prop:fredholm}, the minimal and therefore
generic kernel dimension of $D_{\mu,\lambda}$ is one on every connected
component when $D_{\mu,\lambda}$ is seen as a family of operators parametrized
over $\Sigma' = \{\sigma \in \bar \Sigma\mid \mu(\sigma)\in \C,
\lambda(\sigma)\in \C \}$. This implies that the kernels of $D_{\mu,\lambda}$
extend through the isolated points in $\Sigma$ at which the kernel is higher
dimensional to a holomorphic line bundle $\mathcal{L}\rightarrow \Sigma$.
Although the family of operators $D_{\mu,\lambda}$ depends on the choice of
adapted basis and fundamental domain, by Lemma~\ref{lem:fiber_of_L} every
fiber $\mathcal{L}_\sigma$ is isomorphic to a space of holomorphic sections
with monodromy $h^\sigma$ of $W$.

\begin{lemma}\label{lem:linebundle} If a quaternionic holomorphic line bundle
  $W$ over a discrete torus with regular combinatorics satisfies
  \eqref{eq:genericity_assumption}, there is a unique complex holomorphic line
  bundle $\mathcal{L}\rightarrow \Sigma$ with the property that the fiber
  $\mathcal{L}_\sigma$ over $\sigma\in \Sigma$ is a subspace of the space of
  holomorphic sections with monodromy $h^\sigma$ of $W$ that, for all but
  finitely many points $\sigma \in \Sigma$, coincides with the space of
  holomorphic sections with the given monodromy.  The pullback of
  $\mathcal{L}$ under the anti--holomorphic involution $\rho\colon
  \Sigma\rightarrow \Sigma$ is $\rho^*\mathcal{L}=\mathcal{L}j$. In
  particular, $\rho$ has no fixed points.
\end{lemma}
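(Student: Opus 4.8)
The plan is to harvest the existence of $\mathcal{L}$ from the discussion preceding the lemma, and then to supply the two genuinely new assertions: uniqueness and the behaviour under $\rho$. For existence I would regard $D_{\mu,\lambda}$ as a holomorphic family of Fredholm operators parametrized over the connected components of the spectral curve $\Sigma$, a smooth complex $1$--manifold by construction. The genericity assumption \eqref{eq:genericity_assumption}, together with Lemma~\ref{lem:number_points} and Corollary~\ref{cor:number_of_infinities}, shows that at the ends of the first type the kernel of the extended operator $D_{\mu,0}$ is exactly $1$--dimensional, so on each connected component the minimal (hence generic) kernel dimension equals one. Proposition~\ref{prop:fredholm} then yields a holomorphic line bundle $\mathcal{L}\to\Sigma$ whose fibre over the generic $\sigma$ is $\ker D_{\mu(\sigma),\lambda(\sigma)}$ and which extends holomorphically through the isolated points where the kernel jumps; by Lemma~\ref{lem:fiber_of_L} this identifies $\mathcal{L}_\sigma$ with a subspace of the space of holomorphic sections with monodromy $h^\sigma$, with equality off a finite set.

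For uniqueness I would argue that any line bundle with the stated property must agree with $\ker D_{\mu(\sigma),\lambda(\sigma)}$ on the dense open subset of $\Sigma$ where that kernel is $1$--dimensional; since two holomorphic line subbundles of the (finite--rank) ambient bundle of sections with monodromy that coincide on a dense subset of each component coincide identically, $\mathcal{L}$ is unique. In particular this makes $\mathcal{L}$ independent of the auxiliary choice of adapted basis and compatible fundamental domain, since the characterizing property refers only to spaces of holomorphic sections with monodromy.

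The content specific to this lemma is the compatibility with the involution. Recall from the discussion of $\Spec(W)$ that right multiplication by $j$ carries a holomorphic section $\psi$ with monodromy $h\in\Hom(\Gamma,\C_*)$ to the holomorphic section $\psi j$ with monodromy $\bar h$, because $j^{-1}hj=\bar h$ for complex $h$. Since $h\circ\rho=\bar h$, right multiplication by $j$ maps the space of holomorphic sections with monodromy $h^\sigma$ isomorphically onto that with monodromy $h^{\rho\sigma}=\overline{h^\sigma}$. Consequently $\mathcal{L}_\sigma j=\mathcal{L}_{\rho\sigma}$ for all but finitely many $\sigma$, which is precisely the fibrewise form of $\rho^*\mathcal{L}=\mathcal{L}j$; as both sides agree on a dense subset of each component, they agree everywhere.

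Finally I would deduce freeness of $\rho$ from $\rho^*\mathcal{L}=\mathcal{L}j$. If $\rho\sigma=\sigma$ then $\mathcal{L}_\sigma=\mathcal{L}_\sigma j$, so for a nonzero $\psi\in\mathcal{L}_\sigma$, the section $\psi j$ again lies in the complex line $\mathcal{L}_\sigma$ and hence $\psi j=\psi c$ for some $c\in\C_*$, giving $\psi(j-c)=0$; evaluating at a vertex where $\psi$ does not vanish contradicts the fact that the nonzero quaternion $j-c$ acts invertibly on the quaternionic line $W_p$. Thus $\rho$ has no fixed points. I expect the main obstacle to be the existence step, namely confirming that the minimal kernel dimension is genuinely one on every connected component and that the kernel bundle extends across the jump points; but this is exactly what Proposition~\ref{prop:fredholm} and the asymptotic lemmas already deliver, after which the involution argument is short.
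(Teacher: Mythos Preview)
Your proposal is correct and follows essentially the same route as the paper: existence via Proposition~\ref{prop:fredholm} applied to $D_{\mu,\lambda}$ over $\Sigma$ using the $1$--dimensional kernels at the ends supplied by \eqref{eq:genericity_assumption}, and the identity $\rho^*\mathcal{L}=\mathcal{L}j$ via the action $\psi\mapsto\psi j$ on holomorphic sections with monodromy. Your explicit derivation of fixed--point--freeness from $\psi j=\psi c$ is more detailed than the paper's one--line assertion; conversely, the paper adds one sentence you omit, namely that the exceptional set where the kernel jumps is \emph{finite} (not merely discrete on the non--compact curve $\Sigma$) because it maps under normalization into the finite singular locus of the algebraic curve $\Spec(W)$.
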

\begin{proof}
  The existence and uniqueness of the line bundle $\mathcal{L}$ is explained
  in the preceding discussion.  The number of points with higher dimensional
  kernel is finite because, under the normalization map $h\colon
  \Sigma\rightarrow \Spec(W)$, it is mapped to a subset of the finite set of
  singular points of the algebraic curve $\Spec(W)$.

  For generic $\sigma\in \Sigma$, the spaces of holomorphic section with
  monodromy $h^\sigma$ and $h^{\rho\sigma}$ of $W$ are complex 1--dimensional.
  Multiplying a non--trivial element $\psi\in
  \mathcal{L}_\sigma\backslash\{0\}$ by the quaternionic $j$ then yields a
  non--trivial holomorphic section $\psi j$ with monodromy
  $h^{\rho\sigma}=\bar h^\sigma$ which spans $\mathcal{L}_{\rho\sigma}$.  This
  implies $\rho^*\mathcal{L}=\mathcal{L}j$. In particular, although complex
  conjugation on $\Spec(W)$ leaves all real representations fixed, its lift to
  the anti--holomorphic involution $\rho\colon \Sigma\rightarrow \Sigma$ has
  no fixed points.
\end{proof}

\textbf{Proof of Theorem~\ref{th:spectral_curve_f}:} The idea behind the
definition of $F(-,\sigma)$ and $\hat F(-,\sigma)$ is essentially the same as
in the smooth case~\cite{BLPP}.  For generic immersions $f\colon M \rightarrow
S^4$ of discrete tori with regular combinatorics, we show that all but
finitely many points $\sigma\in \Sigma$ of the spectral curve give rise to a
1--dimensional space of holomorphic sections with monodromy $h^\sigma$ of
$V/L$ which are nowhere vanishing and hence define a unique Darboux transform
of~$f$.  The maps $\hat F$ and $F$ are then defined by the projective lines
obtained from prolonged non--trivial elements in the fibers of $\mathcal{L}$,
where $\mathcal{L}\rightarrow \Sigma$ is the holomorphic line bundle whose
fiber $\mathcal{L}_\sigma$ over $\sigma$ is a subspace of the holomorphic
sections with monodromy $h^\sigma$ of $V/L$.

In contrast to the smooth case, for immersions of discrete tori with regular
combinatorics it is not always true that the generic dimension of the spaces
of holomorphic sections with monodromy is one, see Section~\ref{sec:small}.
In Theorem~\ref{th:spectral_curve_f} we therefore restrict to immersions with
the property that $W=V/L$ satisfies the genericity assumption
\eqref{eq:genericity_assumption}. This assures the 1--dimensionality of
$\mathcal{L}\rightarrow \Sigma$ and, in case the spectral curve is
irreducible, implies that non--trivial elements in a generic fiber of
$\mathcal{L}$ are nowhere vanishing holomorphic sections with monodromy of
$V/L$ (which is important for defining $\hat F$ and $F$).  Moreover, the
assumption \eqref{eq:genericity_assumption} enables us to derive the
asymptotics of $F$.

We first define $\hat F$ on $M' \times \Sigma$.  By
Lemma~\ref{lem:linebundle}, away from a finite set $S_1\subset \Sigma$ of
points at which the space of holomorphic sections with the corresponding
monodromy $h^\sigma$ is higher dimensional, the fiber $\mathcal{L}_\sigma$ of
the holomorphic line bundle $\mathcal{L}\rightarrow \Sigma$ coincides with the
space of holomorphic section with monodromy $h^\sigma$.

\begin{lemma}\label{lem:generically_non_vanishing}
  Let $\Sigma$ be the spectral curve of an immersed discrete torus with
  \eqref{eq:genericity_assumption}. If $\Sigma$ is irreducible, there is a
  discrete set $S_2\subset \Sigma$ such that the non--trivial elements of
  $\mathcal{L}_\sigma$ for $\sigma \in \Sigma \backslash S_2$ are nowhere
  vanishing holomorphic sections with monodromy $h^\sigma$ of $V/L$.
\end{lemma}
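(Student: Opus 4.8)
The plan is to produce $S_2$ as a finite union of the vanishing loci of the distinguished sections at the individual vertices. Fix a vertex $v\in\mathcal{V}$. Since $\mathcal{L}\rightarrow \Sigma$ is a holomorphic line bundle and evaluation of a holomorphic section at $v$ is a holomorphic bundle map $\mathcal{L}\rightarrow \Sigma\times W_v$ into the trivial bundle with fiber the complex $2$--plane $W_v=(V/L)_v$, the assignment $\sigma\mapsto \psi_\sigma(v)$ for a local holomorphic frame $\psi_\sigma$ of $\mathcal{L}$ is holomorphic. Hence
\[ Z_v=\{\sigma\in\Sigma \mid \psi_\sigma(v)=0 \textrm{ for } 0\neq \psi_\sigma\in\mathcal{L}_\sigma\} \]
is a well--defined analytic subset of $\Sigma$, independent of the scaling of $\psi_\sigma$. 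Because $\Sigma$ is irreducible, hence connected, each $Z_v$ is either discrete or all of $\Sigma$. Once we know $Z_v\neq\Sigma$ for every $v$, the set $S_2=\bigcup_{v\in\mathcal{V}}Z_v$ is a finite union of discrete sets, hence discrete, and for $\sigma\in\Sigma\backslash S_2$ a nonzero element of $\mathcal{L}_\sigma$ is a holomorphic section with monodromy $h^\sigma$ of $V/L$ (since $\mathcal{L}_\sigma$ is by construction contained in this space) that vanishes at no vertex; by $\Gamma$--equivariance of its zero locus it is then nowhere vanishing on the universal covering. So the entire content of the lemma reduces to excluding $Z_v=\Sigma$.

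\textbf{Reducing vanishing at $v$ to a linear functional.} To analyze $Z_v$ I would return to the transfer--operator description from Part~2 of the proof of Theorem~\ref{th:spectral_curve_W}. Fix an adapted basis. A section in $\ker D_{\mu,\lambda}$ is determined by its initial data $s$ on the bottom row, $s$ is an eigenvector of the transfer endomorphism $H(\mu)$ of $\C^{2n}$ (with $n$ the length) for the eigenvalue $\lambda$, and $\Spec(W)$ is the branched cover $\{\det(\lambda-H(\mu))=0\}$ of the $\mu$--plane, which has degree $2n$ in $\lambda$. If $v$ lies in the $k$--th row, then $\psi_\sigma(v)$ is obtained from $s$ by applying the propagation $T_{k-1}(\mu)\cdots T_0(\mu)$ and then projecting onto the summand $W_v$. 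For generic $\mu$ each $T_i(\mu)$ is invertible (it is singular only when $\mu$ is a horizontal holonomy of the $i$--th row, of which there are finitely many), so $s\mapsto \psi_\sigma(v)$ is a nonzero linear map $\C^{2n}\rightarrow W_v$ whose kernel is a proper subspace of $\C^{2n}$.

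\textbf{Excluding $Z_v=\Sigma$.} Suppose $Z_v=\Sigma$. Since $\Sigma$ normalizes the \emph{entire} curve $\Spec(W)$, over a generic $\mu\in\C_*$ the fiber of $\Sigma$ consists of $2n$ points realizing all $2n$ eigenvalues of $H(\mu)$; for generic $\mu$ these eigenvalues are pairwise distinct, so the corresponding eigenvectors $s_1,\dots,s_{2n}$ span $\C^{2n}$. The assumption $Z_v=\Sigma$ forces $\psi_\sigma(v)=0$ at each of these $2n$ points, i.e.\ every $s_j$ lies in the kernel of the nonzero map $s\mapsto\psi_\sigma(v)$ of the previous paragraph---contradicting that the $s_j$ span $\C^{2n}$. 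Hence $Z_v\neq\Sigma$, and the lemma follows. I expect this last step to be the crux: the purely analytic dichotomy (discrete versus everything) only yields the conclusion once it is combined with the global fact that the irreducible spectral curve meets \emph{all} sheets of the covering of the $\mu$--plane, so that vanishing at $v$ along the whole curve would annihilate an entire spanning family of eigenvectors. It is precisely here that irreducibility of $\Sigma$ is indispensable: on a reducible curve a single component might cover only part of the spectrum, and the sections over it could vanish at $v$ without any contradiction.
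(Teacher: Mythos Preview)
Your reduction to showing $Z_v\neq\Sigma$ for each vertex $v$ is correct and matches the paper's setup. The gap is in your ``crux'' step. You assert that over a generic $\mu$ the fiber of $\Sigma$ consists of $2n$ points corresponding to $2n$ pairwise distinct eigenvalues of $H(\mu)$, so that the eigenvectors span $\C^{2n}$. This amounts to claiming that $\det(\lambda-H(\mu))$ is squarefree as a polynomial in $\lambda$ over $\C(\mu)$, and nothing you wrote rules out a factorization $\det(\lambda-H(\mu))=c\,p(\mu,\lambda)^k$ with $k\geq 2$ and $p$ irreducible. In that case the normalization $\Sigma$ covers the $\mu$--plane with degree only $2n/k$, the eigenlines over a generic $\mu$ span only a $(2n/k)$--dimensional subspace, and there is no contradiction with all of them lying in the $(2n-2)$--dimensional kernel of evaluation at $v$. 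The phrase ``$\Sigma$ normalizes the entire curve $\Spec(W)$'' only guarantees that $\Sigma$ sees every irreducible component of the reduced variety, not every eigenvalue with its algebraic multiplicity. (It is plausible that \eqref{eq:genericity_assumption} forces $k=1$---one route would be to show the zeros of $\det H(\mu)=\prod_i\det T_i(\mu)$ at the horizontal holonomies are simple---but this is a nontrivial step you have not supplied.)

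The paper sidesteps this issue by arguing at a single end rather than over a generic fiber. After arranging the fundamental domain so that $v_0$ lies in the top row of fat black dots, it takes the end $\infty\in\bar\Sigma\setminus\Sigma$ with $\lambda(\infty)=0$ and $\mu(\infty)$ equal to a horizontal holonomy for the top row of black triangles. The $1$--dimensional kernel of $D_{\mu(\infty),0}$, which is the fiber of the extended line bundle at $\infty$, consists of sections that vanish on the top row of white dots and are parallel for the row connection on the top row of black dots, hence nowhere zero there. This directly exhibits a point where evaluation at $v_0$ is nonzero, contradicting $Z_{v_0}=\Sigma$. The argument uses \eqref{eq:genericity_assumption} only to pin down this one end and needs no information about the generic spectrum of $H(\mu)$.
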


Lemma~\ref{lem:generically_non_vanishing} (and
Theorem~\ref{th:spectral_curve_f}) holds more generally under the assumption
that the quotient $\Sigma/\rho$ of the spectral curve under the fixed point
free anti--holomorphic involution $\rho$ is connected, i.e., that $\Sigma$ is
either connected or the direct sum of two connected Riemann surfaces
interchanged under $\rho$. The latter happens for generic immersed tori in
$\CP^1$, cf.\ Lemma~\ref{lem:two--sphere}. It would be interesting to know
whether the assumption that $\Sigma/\rho$ is connected can be dropped from
Lemma~\ref{lem:generically_non_vanishing} and
Theorem~\ref{th:spectral_curve_f}.

\begin{proof}
  We choose a normalized adapted basis of the lattice $\Gamma$ and a
  compatible fundamental domain.  Such choice defines a family
  $D_{\mu,\lambda}$ of operators whose kernels describe the holomorphic
  sections with monodromy, see Lemma~\ref{lem:fiber_of_L}.

  Assume $S_2$ does not exist.  Then, because $\Sigma$ is connected, there is
  a vertex $v_0$ in the fundamental domain (i.e., a fat black dot in
  Figure~\ref{fig:def_spec}) that is a zero for all holomorphic sections
  in~$\mathcal{L}_\sigma$, $\sigma\in \Sigma$.  But this
  contradicts~\eqref{eq:genericity_assumption}: assume the compatible
  fundamental domain of the torus is chosen such that the vertex $v_0$ is
  contained in the upper row of fat black dots in Figure~\ref{fig:def_spec}.
  Denote $\infty\in \bar\Sigma\backslash\Sigma$ one of the two ends for which
  $\mu(\infty)\in \C_*$ is one of the two horizontal holonomies corresponding
  to the upper row of black triangles.  Then $D_{\mu(\infty),0}$ has a
  1--dimensional kernel whose elements are sections that vanish identically on
  the upper row of white dots in Figure~\ref{fig:def_spec} and therefore have
  no zeroes on the upper row of fat black dots.  This contradicts the
  assumption that all elements in~$\mathcal{L}_\sigma$, $\sigma\in \Sigma$
  vanish at~$v_0$.
\end{proof}

A local holomorphic section of $\mathcal{L}$ is a complex holomorphic family
$\sigma \mapsto \psi^\sigma$ of quaternionic holomorphic sections with
monodromy $h^\sigma$ of $V/L$.  With respect to a fundamental domain
compatible with a normalized adapted basis, for every shaded black triangle
$b$ in Figure~\ref{fig:def_spec}, the prolongation $\sigma \mapsto \hat
\psi_b^\sigma$ is a holomorphic map to $\C^4=(\H^2,\i)$.  Projectively, this
yields a map $\hat F\colon M' \times \Sigma \rightarrow \CP^3$ which is
holomorphic in the second variable: if $\sigma \mapsto \hat \psi_b^\sigma$
vanishes at some point $\tilde \sigma \in S_2$ (that is, if $\psi^{\tilde
  \sigma}$ vanishes simultaneously at all three vertices of $b$) we define
$\hat F(b,\tilde \sigma)$ as the line described by the first non--trivial
element in the Taylor expansion of $\sigma \mapsto \hat \psi_b^\sigma$ at
$\tilde \sigma$.

Denote $F=\pi \circ \hat F$ the composition of $\hat F$ with the twistor
projection $\pi\colon \CP^3\rightarrow\HP^1$.  By construction, for all
$\sigma\in \Sigma\backslash S_2$ the map $f^\sharp=F(-,\sigma)$ is a Darboux
transform of $f$. For $\sigma \in S_2$ it is a singular Darboux transform as
defined in i) of Remark~\ref{rem:after_main_th}. Similar to
$\mathcal{L}\rightarrow \Sigma$, the map $\hat F$ clearly does not depend on
the choice of adapted basis and fundamental domain.  The quaternionic symmetry
$\rho^*\mathcal{L}=\mathcal{L}j$ of the bundle $\mathcal{L}$, see
Lemma~\ref{lem:linebundle}, implies
\begin{equation}
  \label{eq:equivariance}
  \hat F(b,\rho\sigma)=\hat F(b,\sigma)j.
\end{equation}

In order to extend $\hat F$ through an end $\infty\in \bar \Sigma \backslash
\Sigma$ we chose the unique normalized adapted basis $\gamma$, $\eta$ of the
lattice with respect to which $\mu(\infty)\in \C_*$ and $\lambda(\infty)=0$,
where $\mu=h(\gamma)$ and $\lambda=h(\eta)$, see Lemma~\ref{lem:infinity}.
For every compatible fundamental domain the family of operators
$D_{\mu,\lambda}$ extends to $\C\times \C$ and, because $\infty$ is a point at
infinity, the operator $D_{\mu(\infty),0}$ has a non--trivial kernel.  Without
loss of generality we can therefore assume that the compatible fundamental
domain is chosen such that the operator $T_{m-1}(\mu(\infty))$ is not
invertible.  We extend the map $\hat F$ to $\infty$ by taking the projective
lines described by the prolongation of a nowhere vanishing local holomorphic
section of $\mathcal{L}'$ defined near $\infty$, where
$\mathcal{L}'\rightarrow \Sigma'$ denotes the kernel bundle of
$D_{\mu,\lambda}$ seen as a family of operators parametrized over $\Sigma' =
\{\sigma \in \bar \Sigma\mid \mu(\sigma)\in \C, \lambda(\sigma)\in \C \}$.  By
our choice of fundamental domain the sections in the 1--dimensional kernel of
$D_{\mu(\infty),0}$ do not vanish on the $(m-1)^{th}$ row of fat black dots in
Figure~\ref{fig:def_spec}, but its holomorphic extension to the (white) dots
in the $m^{th}$ row above vanishes identically. This implies
\[ f(v) = F(b,\infty)=F(b,\rho \infty)\] for $b$ a shaded black triangles $b$
in the upper row and $v$ its upper vertex.  The fact that for every choice of
adapted basis the kernel bundle $\mathcal{L}$ extends to $\Sigma'$ proves that
the points corresponding to singular Darboux transforms cannot accumulate at
infinity. This implies:
\begin{cor}
  The subset $S_2\subset \Sigma$ of points $\sigma\in \Sigma$ for which the
  holomorphic sections in $\mathcal{L}_\sigma$ have zeroes is finite.
\end{cor}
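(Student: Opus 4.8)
The plan is to deduce finiteness of $S_2$ from the \emph{discreteness} already obtained in Lemma~\ref{lem:generically_non_vanishing}, combined with a non-accumulation statement at the ends of the spectral curve. Recall that $\bar\Sigma$ is a compact Riemann surface and that, by Corollary~\ref{cor:number_of_infinities}, the complement $\bar\Sigma\backslash\Sigma$ consists of only finitely many ends $\infty$. Since $S_2\subset\Sigma$ is discrete, its only possible accumulation points in the compact surface $\bar\Sigma$ must lie among these finitely many ends. It therefore suffices to show that $S_2$ does not accumulate at any $\infty\in\bar\Sigma\backslash\Sigma$; then $S_2$ is a discrete subset of the compact surface $\bar\Sigma$ with no accumulation points and hence finite.

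The key tool near an end is the holomorphic extension of the kernel bundle set up in the preceding discussion. For the unique normalized adapted basis with $\mu(\infty)\in\C_*$ and $\lambda(\infty)=0$ (Lemma~\ref{lem:infinity}) and a compatible fundamental domain, the line bundle $\mathcal{L}$ extends across $\infty$ to the kernel bundle $\mathcal{L}'\to\Sigma'$ over $\Sigma'=\{\sigma\in\bar\Sigma\mid \mu(\sigma)\in\C,\ \lambda(\sigma)\in\C\}$, which contains $\infty$ as an interior point. I would choose a nowhere-vanishing local holomorphic frame $\sigma\mapsto\psi^\sigma$ of $\mathcal{L}'$ on a neighborhood of $\infty$. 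For each of the finitely many vertices $v$ in the fundamental domain (Figure~\ref{fig:def_spec}), the evaluation $\sigma\mapsto\psi^\sigma_v$ is then a holomorphic function of $\sigma$ near $\infty$, and a point $\sigma\in\Sigma$ close to $\infty$ lies in $S_2$ precisely when $\psi^\sigma_v=0$ for at least one such $v$.

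It then remains to rule out accumulation of the sets $\{\sigma\mid\psi^\sigma_v=0\}$ at $\infty$. Since $\psi^\sigma$ is a frame of the line bundle $\mathcal{L}'$, the identical vanishing of $\sigma\mapsto\psi^\sigma_v$ near $\infty$ would, by the identity theorem on the connected component of $\bar\Sigma$ containing $\infty$, force the vertex $v$ to be a common zero of all sections in $\mathcal{L}_\sigma$, $\sigma\in\Sigma$ --- which is exactly what the proof of Lemma~\ref{lem:generically_non_vanishing} excludes under the genericity assumption \eqref{eq:genericity_assumption} and irreducibility of $\Sigma$. Hence none of the functions $\sigma\mapsto\psi^\sigma_v$ vanishes identically, so each has only isolated zeroes near $\infty$; as there are finitely many vertices $v$, only finitely many $\sigma$ near $\infty$ belong to $S_2$. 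Running this over the finitely many ends yields the desired non-accumulation and thus the finiteness of $S_2$.

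The step I expect to be the main obstacle is the holomorphic extension of the kernel bundle across the end, i.e.\ verifying that $\mathcal{L}$ genuinely extends to $\mathcal{L}'$ over $\Sigma'$ so that the evaluation functions $\sigma\mapsto\psi^\sigma_v$ are holomorphic \emph{at} $\infty$ and not merely on $\Sigma$; this rests on applying Proposition~\ref{prop:fredholm} to the polynomial family $D_{\mu,\lambda}$ extended to $\C\times\C$ together with Lemma~\ref{lem:infinity}, which guarantees that $\infty$ is an interior point of $\Sigma'$. I note that at $\infty$ itself the frame $\psi^\infty$ does vanish at the upper vertices of the top row of black triangles (this is the content of the relation $f(v)=F(b,\infty)$); what the argument actually uses is only that this zero at $\infty$ is isolated, which is automatic once holomorphicity across $\infty$ has been established.
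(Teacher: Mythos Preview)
Your proposal is correct and is precisely the argument the paper has in mind: the paper's one-line justification (``the fact that for every choice of adapted basis the kernel bundle $\mathcal{L}$ extends to $\Sigma'$ proves that the points corresponding to singular Darboux transforms cannot accumulate at infinity'') is exactly the non-accumulation statement you spell out via the holomorphic evaluation maps $\sigma\mapsto\psi^\sigma_v$ near each end. Your identification of the main technical input---the extension of the kernel bundle across $\infty$ via Proposition~\ref{prop:fredholm} and Lemma~\ref{lem:infinity}---also matches the paper.
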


\subsection{Spectral curves of small tori}\label{sec:small}
We discuss the spectral curves of immersions $f\colon M \rightarrow S^4$ of
discrete tori $M$ with regular combinatorics and vertex set consisting of
three or four points.  In particular, we give an example of a discrete torus
in $S^4$ with regular combinatorics for which every $\sigma \in \Sigma$ gives
rise to a 2--dimensional space of holomorphic sections with monodromy
$h^\sigma$ of $V/L$.

By definition, the minimal number of vertices of a discrete surface is three,
because the triangulation of the underlying smooth surface is assumed to be
regular.  Every discrete torus with three vertices has regular combinatorics:
its number of black triangles is three by \eqref{eq:torus_vbw} and, because
every triangle is assumed to have three distinct vertices, every vertex has
valence six.  Because three is a prime number, for every adapted basis the
thickness of the torus (see Definition~\ref{def:adapted_basis}) has to be one.
Therefore, every discrete torus with three vertices is a ``thin torus'' as
shown in Figure~\ref{fig:three_and_four_thin}.

\begin{figure}[hbt]
  \centering

  \resizebox{10cm}{!}{\includegraphics{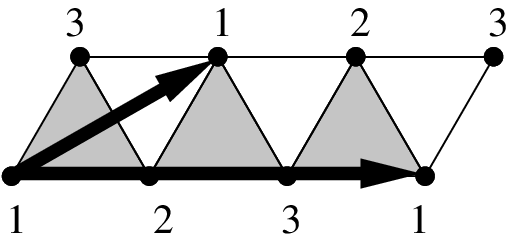}\qquad 
    \includegraphics{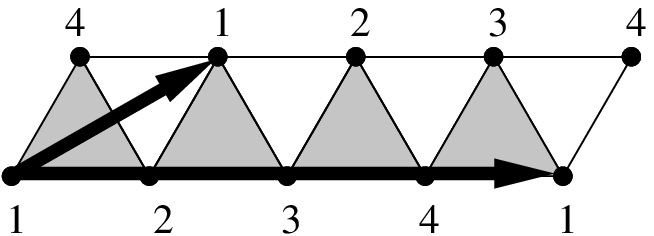}}
  \caption{Thin tori with three and four vertices.}
  \label{fig:three_and_four_thin}
\end{figure}

A discrete torus with four vertices not necessarily has regular combinatorics.
Up to isomorphism, there are two discrete tori with non--regular
combinatorics: the torus obtained by ``adding'' a vertex of valence two to the
discrete torus with three vertices and the torus with two vertices of valence
four and two vertices of valence eight (obtained by gluing opposite edges of a
square that is triangulated by mutually joining the midpoints of all four
edges by straight lines).

\begin{rem}
  The discrete torus $M$ with four vertices one of which has valence two is
  ``isospectral'' to the torus $\hat M$ with three vertices: let more
  generally $M$ be a discrete torus obtained by adding a vertex of valence two
  to another discrete torus $\hat M$. For every quaternionic holomorphic line
  bundle $W$ over $M$ and every monodromy $h$, the space of holomorphic
  section with monodromy $h$ of $W$ is then canonically isomorphic to the
  space of holomorphic sections with the same monodromy of the restriction of
  $W$ to $\hat M$.
\end{rem}

A discrete torus with four points and regular combinatorics can either be a
thin torus as in Figure~\ref{fig:three_and_four_thin}, that is, it admits an
adapted basis for which its thickness is one, or the thickness for every
adapted basis is two (the only prime factor of four) and the torus is of the
form shown in Figure~\ref{fig:fourpoint2}.

\begin{figure}[hbt]
  \centering
  \resizebox{4cm}{!}{\input{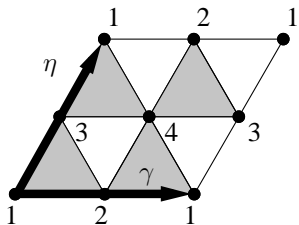}}
  \caption{``Thicker'' torus with four vertices.}
  \label{fig:fourpoint2}
\end{figure}

For immersions of discrete tori with three or four vertexes we can always
assume, after applying a M\"obius transformation, that the immersion takes
values in $\CP^1$.  By the following lemma, an immersion of a discrete torus
with values in $\CP^1$ has a decomposable spectral curves.

\begin{lemma} \label{lem:two--sphere} Let $f\colon M\rightarrow \CP^1\subset
  \HP^1$ be an immersion of a discrete torus $M$ with regular combinatorics
  that takes values in the 2--sphere $S^2=\CP^1$ . Then, the spectral curve
  $\Sigma$ of $f$ can be decomposed $\Sigma=\Sigma_1\, \dot \cup \, \Sigma_2$
  into two Riemann surfaces which are interchanged under the anti--holomorphic
  involution $\rho$.  Moreover, all Darboux transforms $f^\sharp=F(-,\sigma)$
  corresponding to points $\sigma \in \Sigma $ take values in~$\CP^1$.
\end{lemma}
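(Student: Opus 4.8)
\textbf{The plan} is to exploit the fact that a map into $\CP^1\subset\HP^1$ forces the quaternionic holomorphic line bundle $V/L$ to be a \emph{doubling} of a complex holomorphic line bundle, and to read off the asserted splitting from this. Writing $\H=\C\oplus\C j$, one has $\H^2=\C^2\oplus\C^2 j$; since $f$ takes values in $\P(\C^2)=\CP^1$, the line subbundle has the form $L=\ell\H=\ell\oplus\ell j$ with $\ell\subset\C^2$ the complex line bundle describing $f$ as a map into $\CP^1$. Hence $W:=V/L=W_\C\oplus W_\C j$ with $W_\C=\C^2/\ell$, and because the holomorphic structure on $V/L$ is induced by projecting constant sections of $V$, the prescribed subspaces decompose as $U_b=U_b^\C\oplus U_b^\C j$, where $U_b^\C$ is the holomorphic structure of the complex Kodaira correspondence on $W_\C$. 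Thus a section $\psi=\psi_1+\psi_2 j$ of $W$ is holomorphic if and only if $\psi_1$ and $\psi_2$ are holomorphic sections of $W_\C$.

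First I would analyse holomorphic sections with monodromy. Using $jh=\bar h\,j$ for $h\in\C$, a section $\psi=\psi_1+\psi_2 j$ has complex monodromy $h$ precisely when $\psi_1$ has monodromy $h$ and $\psi_2$ has monodromy $\bar h$ as sections of $W_\C$. Therefore the space of holomorphic sections of $W$ with monodromy $h$ is $H^0(W_\C,h)\oplus H^0(W_\C,\bar h)\,j$, so $h\in\Spec(W)$ if and only if $h$ or $\bar h$ lies in the complex spectrum $\Spec_\C(W_\C)$. This yields the set-theoretic splitting $\Spec(W)=C_1\cup C_2$ with $C_1=\Spec_\C(W_\C)$ and $C_2=\rho(C_1)$. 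At the level of defining polynomials the operator $D_{\mu,\lambda}$ of Lemma~\ref{lem:fiber_of_L} becomes block diagonal in the splitting $W=W_\C\oplus W_\C j$, the two blocks being the complex operators for monodromies $(\mu,\lambda)$ and $(\bar\mu,\bar\lambda)$; re-expressing the second block through the anti-linear identification moves the conjugation onto the coefficients, so that $\det D_{\mu,\lambda}=P(\mu,\lambda)\,\bar P(\mu,\lambda)$, where $P=\det D^\C_{\mu,\lambda}$ defines $C_1$ and $\bar P$ (the polynomial with complex-conjugated coefficients) defines $C_2$.

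To upgrade this union to a \emph{disjoint} union of Riemann surfaces I would show that $C_1$ and $C_2=\rho(C_1)$ share no irreducible component, so that the normalization of $\Spec(W)$ is the disjoint union $\Sigma_1\dot\cup\Sigma_2$ of the normalizations of $C_1$ and $C_2$; the involution $\rho$, which interchanges $C_1$ and $C_2$, then lifts to an anti-holomorphic involution interchanging $\Sigma_1$ and $\Sigma_2$, consistent with its being fixed-point free. A common component $Z$ would satisfy $H^0(W_\C,h)\neq 0$ and $H^0(W_\C,\bar h)\neq 0$ for generic $h\in Z$, hence $\dim_\C H^0(W,h)\ge 2$ there; but under the genericity assumption \eqref{eq:genericity_assumption} the generic dimension of the spaces of holomorphic sections with monodromy is one on every component of $\Sigma$ (Lemma~\ref{lem:linebundle}), a contradiction. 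I expect \textbf{this step}---ruling out shared components, i.e.\ correctly bookkeeping the conjugate monodromy and invoking genericity---to be the main obstacle.

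For the second statement I would track the prolongation. For generic $\sigma\in\Sigma_1$ the fibre $\mathcal{L}_\sigma$ is spanned by a section $\psi$ lying entirely in the subbundle $W_\C$, because $\dim_\C H^0(W,h^\sigma)=1$ together with $h^\sigma\notin C_2$ forces $H^0(W_\C,\bar h^\sigma)=0$, so the $\psi_2$-part vanishes. Over each black triangle $b$ one then has $\psi|_b\in U_b^\C=\pi(\C^2)$, and since $f$ is an immersion the projection $\pi\colon\H^2\to\prod_{p\in b}(V/L)_p$ is injective; hence the prolongation $\hat\psi_b$ is the unique preimage and lies in $\C^2$. Thus $L^\sharp_b=\hat\psi_b\H\in\P(\C^2)=\CP^1$, so $\hat F(b,\sigma)\in\C^2\subset\C^4$ and $F(b,\sigma)\in\CP^1$ for generic $\sigma\in\Sigma_1$. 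Since $\hat F(b,-)$ is holomorphic and $\CP^1\subset\HP^1$ is closed, this persists for all $\sigma\in\Sigma_1$; and by the equivariance $\hat F(b,\rho\sigma)=\hat F(b,\sigma)j$ of \eqref{eq:equivariance}, which projects to $F(b,\rho\sigma)=F(b,\sigma)$ under the twistor map, the same holds on $\Sigma_2=\rho(\Sigma_1)$. Hence every Darboux transform $F(-,\sigma)$ takes values in $\CP^1$.
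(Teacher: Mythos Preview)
Your proof is correct and follows the same route as the paper: both exploit that $V/L$ is the quaternionification of the complex bundle $\hat V/\hat L$ coming from complex Kodaira correspondence, so that the operators $D_{\mu,\lambda}$ split block-diagonally and the spectrum decomposes as $\Spec_\C(W_\C)\cup\rho(\Spec_\C(W_\C))$. You are in fact more careful than the paper on two points---the paper simply asserts the disjoint decomposition $\Sigma=\Sigma_1\dot\cup\Sigma_2$ without discussing whether the two pieces might share an irreducible component, and it does not spell out why the Darboux transforms land in $\CP^1$; your appeal to \eqref{eq:genericity_assumption} for the disjointness step is reasonable, since the second clause about $F(-,\sigma)$ already presupposes that hypothesis for $F$ to be defined.
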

\begin{proof}
  As in Sections~\ref{sec:def_spectral} and~\ref{sec:proof} (with obvious
  adaptations like replacing quaternions by complex number etc.)  one can
  define a spectral curve $\Sigma_1$ for an immersion $f$ with values in
  $\CP^1$ as the normalization of the possible monodromies of holomorphic
  sections of the complex holomorphic line bundle $\hat V /\hat L$ over $M$
  obtained from $f$ by complex Kodaira correspondence, where $\hat V$ is
  trivial $\C^2$--bundle over $M$ and $\hat L$ the line subbundle
  corresponding to~$f$.

  The quaternionic holomorphic line bundle $V/L$ corresponding to $f$ via
  quaternionic Kodaira correspondence is then the quaternionification of $\hat
  V/\hat L$. All vector spaces and operators in Section~\ref{sec:proof} have
  thus a natural direct sum decomposition with respect to this
  quaternionification.  In particular, the $\CP^1$--spectral curve $\Sigma_1$
  of $\hat V /\hat L$ is embedded into the $\HP^1$--spectral curve $\Sigma$ of
  $V/L$ and $\Sigma=\Sigma_1\, \dot \cup \, \Sigma_2$, where $\Sigma_2=\rho
  \Sigma_1 \cong \bar \Sigma_1$ is the image of $\Sigma_1$ under the
  involution $\rho$.
\end{proof}

As explained in Section~\ref{sec:curves}, immersions of thin tori as in
Figure~\ref{fig:three_and_four_thin} are polygons in~$\CP^1$.  By
Remark~\ref{rem:spec_s2} and Theorem~\ref{th:curve_DT_as_cylinder_DT}, the
$\HP^1$--spectral curve $\Sigma$ of such a thin torus in $\CP^1\subset \HP^1$
is the double of the $\CP^1$--spectral curve of the corresponding polygon
in~$\CP^1$. Moreover, by Remark~\ref{rem:spec_s2}, the dimension of the space
of holomorphic sections with monodromy $h^\sigma$ of $V/L$ for generic
$\sigma\in \Sigma$ is one.

In the remainder of this section we investigate the spectral curve of an
arbitrary immersion $f\colon M \rightarrow \CP^1\subset \HP^1$ of the torus
$M$ in Figure~\ref{fig:fourpoint2}.  By Lemma~\ref{lem:two--sphere}, its
$\HP^1$--spectral curve is the double of its $\CP^1$--spectral curve.  It is
therefore sufficient to determine this $\CP^1$--spectral curve.

In the affine coordinate $\C\rightarrow \CP^1\; x\mapsto [x, 1]$ the
immersion is given by four mutually disjoint points $x_1$, ..., $x_4\in\C$. We
trivialize the bundle $\hat V/\hat L$ corresponding to the immersion by the
holomorphic section $\varphi=\pi e_1$ with $\pi$ denoting the canonical
projection to the quotient $\hat V/\hat L$ and $e_1$ denoting the first basis
vector of $\C^2$ seen as a section of the trivial bundle $\hat V$.  Over a
black triangle $b$ with vertices $p$, $q$ and $r$, the holomorphic structure
on $\hat V/\hat L$ is given by the linear form
\begin{equation}
  \label{eq:hol_str_in_trivialization}
  \alpha = \left(\,\frac{x_r-x_q}{x_p-x_q}\,,\,
    \frac{x_r-x_p}{x_q-x_p}\,,\,-1\, \right)
\end{equation}
acting on $\Gamma(\hat V/\hat L)_{|b}=(\hat V/\hat L)_{|p}\oplus (\hat V/\hat
L)_{|q} \oplus (\hat V/\hat L)_{|r}\cong \C \oplus \C \oplus \C$ with
isomorphism induced by~$\varphi$.

Using \eqref{eq:hol_str_in_trivialization} we compute now the operator
$H(\mu)$ defined in Part~2 of the proof of Theorem~\ref{th:spectral_curve_W}
(see Section~\ref{sec:proof}): the operator $T_0(\mu)$ maps a section
$(y_1,y_2) \in \C^2$ over the first and second vertex in the lowest row of
Figure~\ref{fig:fourpoint2} to the section
\[\left(
  \frac{x_3-x_2}{x_1-x_2} y_1 + \frac{x_3-x_1}{x_2-x_1} y_2 \, , \,
  \frac{x_4-x_1}{x_2-x_1} y_2 + \frac{x_4-x_2}{x_1-x_2} y_1 \mu \right) \]
over the first and second vertex in the middle row. Similarly, the operator 
$T_1(\mu)$ maps a section
$(y_3,y_4) \in \C^2$ over the first and second vertex in the middle row of
Figure~\ref{fig:fourpoint2} to the section
\[\left(
  \frac{x_1-x_4}{x_3-x_4} y_3 + \frac{x_1-x_3}{x_4-x_3} y_4 \, , \,
  \frac{x_2-x_3}{x_4-x_3} y_4 + \frac{x_2-x_4}{x_3-x_4} y_3 \mu \right) \]
over the first and second vertex in the lower row. The composition $H(\mu) =
T_1(\mu) \circ T_0(\mu)$ is
\[ 
(y_1,y_2) \mapsto \frac{(x_1-x_3)(x_2-x_4)\mu -
  (x_2-x_3)(x_1-x_4)}{(x_1-x_2)(x_3-x_4)} (y_1,y_2). 
\] 
Hence, for every $\mu\in \C_*$ there is a unique eigenvalue $\lambda$ of
$H(\mu)$ and there is a unique value $\mu\in \C_*$ for which this eigenvalue
$\lambda$ is zero.  The spectral curve $\Sigma$ of $f\colon M \rightarrow
S^4=\HP^1$ is therefore the double of the three--punctured projective plane
$\CP^1\backslash \{\infty_1,\infty_2,\infty_3\}$. Because $\lambda$ is always
a double eigenvalue of $H(\mu)$, the bundle $\mathcal{L}\rightarrow \Sigma$
describing the holomorphic sections with given monodromy of $V/L$ is a rank
two bundle.

\subsection{Bianchi permutability}\label{sec:bianchi}
A Bianchi type permutability theorem usually states something like: given two
transformations $f^\sharp$ and $f^\flat$ of $f$, there is a common
transformation $\tilde f$ of both $f^\sharp$ and $f^\flat$ that can be
computed algebraically.  This may be visualized by
\begin{equation*}
  \xymatrix{ &  f^\flat \ar@{~>}[dr] & \\
    f \ar@{~>}[ur]\ar@{~>}[dr] & & {\tilde f}.\\
    &  f^\sharp \ar@{~>}[ur] &}
\end{equation*}

Recall that iterated Darboux transforms of an immersion $f\colon M\rightarrow
S^4$ of a discrete surface $M$ are only defined if $M$ has regular
combinatorics, because all three cellular decompositions $M$, $M'$ and $M''$
(see Section~\ref{sec:three_gen}) need to be triangulations: while the
original immersions $f$ is defined on $M$, the Darboux transforms $f^\sharp$
and $f^\flat$ are defined on $M'$ and the iterated Darboux transform $\tilde
f$ is defined on $M''$.

\begin{theorem}\label{th:bianchi}
  Let $f\colon M \rightarrow S^4$ be an immersion of a discrete surface
  $M$ with regular combinatorics and let $f^\sharp$, $f^\flat\colon M'
  \rightarrow S^4$ be two immersed Darboux transforms of $f$ with
  $f^\sharp(b)\neq f^\flat(b)$ for all $b\in M'$. Then, there is a map $\tilde
  f\colon M'' \rightarrow S^4$ that simultaneously is a Darboux transform of
  $f^\sharp$ and $f^\flat$.
\end{theorem}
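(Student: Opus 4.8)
The plan is to reduce the statement, via Theorem~\ref{th:local_DT}, to the construction of a single line bundle over $M''$. Since $f^\sharp$ and $f^\flat$ are immersions of $M'$, any common Darboux transform lives on $M''$, whose vertex set is the set $\mathcal{W}$ of white triangles of $M$ (Section~\ref{sec:three_gen}). So I must produce a line subbundle $\tilde L$ of the trivial $\H^2$--bundle over $\mathcal{W}$, regular with respect to both $L^\sharp$ and $L^\flat$ in the sense of \eqref{eq:regular_DT}, such that the two connections obtained as in \eqref{eq:connection_DT} are both flat: the connection $\tilde P^\sharp$ whose parallel transport between adjacent white triangles factors through $(V/L^\sharp)_b$, and the connection $\tilde P^\flat$ factoring through $(V/L^\flat)_b$, where $b$ is the black triangle shared by the two white triangles. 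By Theorem~\ref{th:local_DT} applied to $f^\sharp$ and to $f^\flat$, flatness of $\tilde P^\sharp$ and $\tilde P^\flat$ is precisely the assertion that $\tilde f$ is a Darboux transform of $f^\sharp$ and of $f^\flat$, and it is equivalent to the multi ratio condition \eqref{eq:multiratio_DT} for the respective hexagons around every white triangle of $M'$, i.e.\ around every vertex $v\in\mathcal{V}$.

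The key reduction I would use is that the two connections share the bundle $\tilde L$. Let $\hat\chi$ be a $\tilde P^\sharp$--parallel section (it exists locally exactly when $\tilde P^\sharp$ is flat, and its projection to $V/L^\sharp$ is the holomorphic section whose prolongation defines $\tilde L$). Any $\tilde P^\flat$--parallel section of $\tilde L$ is then of the form $\hat\chi g$ for a quaternionic function $g\colon\mathcal{W}\to\H_*$, since both take values in the same line. Writing $\hat\chi_w=\hat\psi_b\alpha+\hat\varphi_b\beta$ in the splitting $\H^2=L^\sharp_b\oplus L^\flat_b$ (available because $f^\sharp(b)\neq f^\flat(b)$), the parallelity requirement for $\hat\chi g$ along an edge reads $g_w=c_{ww'}g_{w'}$ for an explicit factor $c_{ww'}\in\H_*$ built from $\hat\chi$, $L^\sharp$ and $L^\flat$. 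Thus $\tilde f$ is a common Darboux transform if and only if, in addition to $\tilde P^\sharp$ being flat, the auxiliary $\H_*$--valued connection $c$ has trivial holonomy.

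Here the combinatorics of the three generations is on my side. Around a face of $M''$ coming from a black triangle of $M$ (a ``white'' face, bounded by the three white triangles incident to a fixed $b$) the factors telescope: $\tilde P^\sharp$--parallelity forces the $L^\flat_b$--component $\beta$ to be constant on that face, whence $c_{w_iw_{i+1}}=\alpha_i^{-1}\alpha_{i+1}$ and the holonomy $\alpha_1^{-1}\alpha_2\cdot\alpha_2^{-1}\alpha_3\cdot\alpha_3^{-1}\alpha_1=1$ is automatically trivial, just as the holonomy of $\tilde P^\sharp$ itself is automatically trivial there. The only genuine conditions, for both $\tilde P^\sharp$ and $c$, sit on the faces of $M''$ coming from the vertices $v\in\mathcal{V}$, and they are exactly the two multi ratio equations \eqref{eq:multiratio_DT} for the $(f^\sharp,\tilde f)$-- and $(f^\flat,\tilde f)$--hexagons around $v$.

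It remains to solve these vertex conditions simultaneously, and this is the step I expect to be the main obstacle. The essential input is the hypothesis that $f^\sharp$ and $f^\flat$ are Darboux transforms of the \emph{same} $f$: around each $v$ this provides the multi ratio relations \eqref{eq:multiratio_DT} for the $(f,f^\sharp)$-- and $(f,f^\flat)$--hexagons on the neighbouring white triangles of $M$, and these relations are what make the two $\tilde f$--conditions compatible rather than overdetermined. Concretely I would realise the common value $\tilde f$ as the ``fourth corner'' of the Bianchi configuration and verify the required quaternionic (hence noncommutative) multi ratio identity, which is nothing but the multidimensional consistency of the space-- and time--discrete Davey--Stewartson equation $M_6=-1$ discussed in Section~\ref{sec:interate_and_KP} (cf.\ the consistent triangular lattices of \cite{BHS02} and \cite{KS03}). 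Granting this local identity, one fixes $\tilde f$ on one white triangle and propagates along $\tilde P^\sharp$; the telescoping above guarantees that the construction is well defined globally on $M''$, and the splitting $L^\sharp_b\oplus L^\flat_b=\H^2$ coming from $f^\sharp(b)\neq f^\flat(b)$ guarantees \eqref{eq:regular_DT}, so that $\tilde f$ is a genuine and, by construction, simultaneous Darboux transform of $f^\sharp$ and $f^\flat$, exactly as in the smooth case~\cite{BLPP}.
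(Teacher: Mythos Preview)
Your reduction to flatness of the two connections $\tilde P^\sharp$ and $\tilde P^\flat$ on a yet-to-be-constructed $\tilde L$ is correct in spirit, but the argument has a genuine gap at the point where it should produce $\tilde L$. You write that you would ``realise the common value $\tilde f$ as the `fourth corner' of the Bianchi configuration'' and then ``fix $\tilde f$ on one white triangle and propagate along $\tilde P^\sharp$''. But $\tilde P^\sharp$ is a connection on $\tilde L$ and only exists once $\tilde L$ is already defined, so this is circular; and even read as ``propagate by successively solving \eqref{eq:multiratio_DT} for $f^\sharp$'', the heart of the matter---that the resulting $\tilde f$ also satisfies \eqref{eq:multiratio_DT} for $f^\flat$---is deferred to an unstated quaternionic multi--ratio identity (``multidimensional consistency''). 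In the noncommutative setting that identity is precisely the nontrivial step, and you have not supplied it.

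The paper avoids this altogether by working on the linear side rather than the multi--ratio side. It takes the prolongations $\hat\psi^\sharp$ and $\hat\psi^\flat$ of the holomorphic sections of $V/L$ defining $f^\sharp$ and $f^\flat$, and writes down an explicit candidate over each white triangle $w$,
\[
\hat\varphi_w \;=\; \hat\psi^\flat_i + \hat\psi^\sharp_i\,\chi_w,
\]
where $i$ ranges over the three black triangles adjacent to $w$. The point is that there is a \emph{unique} $\chi_w\in\H$ making the right--hand side independent of $i$: the differences $\hat\psi^\flat_i-\hat\psi^\flat_j$ and $\hat\psi^\sharp_j-\hat\psi^\sharp_i$ both lie in the line $L_{v_{ij}}$ (since $\hat\psi^\flat$, $\hat\psi^\sharp$ are prolongations over $M$), so the equation $(\hat\psi^\sharp_j-\hat\psi^\sharp_i)\chi=\hat\psi^\flat_i-\hat\psi^\flat_j$ determines a single $\chi_{ij}$, and a two--line check using $L_{13}\oplus L_{23}=\H^2$ shows $\chi_{12}=\chi_{13}=\chi_{23}$. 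This directly produces a section $\hat\varphi$ over $M''$ which, by construction, projects to a holomorphic section of $V/L^\sharp$; the hypothesis $L^\sharp_b\oplus L^\flat_b=\H^2$ makes it nowhere vanishing, so $\tilde L=\hat\varphi\,\H$ is a Darboux transform of $f^\sharp$. The symmetric rewriting $\hat\varphi\chi^{-1}=\hat\psi^\flat\chi^{-1}+\hat\psi^\sharp$ (with $\chi$ nowhere zero since $f^\flat$ is immersed) shows it is simultaneously a Darboux transform of $f^\flat$. No multi--ratio identity is ever invoked: the formula for $\hat\varphi$ \emph{is} the Bianchi fourth corner, and the consistency you were hoping to verify falls out of the linear prolongation picture for free.
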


\begin{proof}
  By Proposition~\ref{prop:global_DT_sections_with_monodromy}, corresponding
  to the Darboux transforms $f^\sharp$ and $f^\flat$ there are holomorphic
  sections $\psi^\sharp$ and $\psi^\flat$ with monodromy of $V/L$ whose
  prolongations $\hat \psi^\sharp$ and $\hat \psi^\flat$ span the line
  subbundles $L^\sharp$ and $L^\flat$ of the trivial $\H^2$--bundle given by
  $f^\sharp$ and $f^\flat$. The idea of the proof is to show that---with the
  right interpretation---the same formula
  \begin{equation}
    \label{eq:bianchi}
    \hat \varphi = \hat \psi^\flat + \hat \psi^\sharp \chi
  \end{equation}
  as in the smooth case defines the prolongation $\hat \varphi$ of a
  holomorphic section $\varphi$ of $V/L^\sharp$. Recall that the prolongations
  $\hat \psi^\flat$ and $\hat \psi^\sharp$ are defined over the vertices of
  $M'$, i.e., over the black triangles of $M$. For \eqref{eq:bianchi} to make
  sense we need $\hat \varphi$ to be a well defined section with monodromy of
  the trivial $\H^2$--bundle over the black triangles of $M'$ alias white
  triangles of $M$.

  The proof is straightforward once we have explained how to make sense of
  \eqref{eq:bianchi}. We do this by proving that there is a unique
  quaternionic function $\chi$ defined on the white triangles of the universal
  covering of $\tilde M$ with the property that 
  \[ \hat \psi^\flat_{1} + \hat \psi^\sharp_{1} \chi_{w} = \hat \psi^\flat_{2}
  + \hat \psi^\sharp_{2} \chi_{w} = \hat \psi^\flat_{3} + \hat \psi^\sharp_{3}
  \chi_{w} \] for every white triangle $w\in \tilde{\mathcal{W}}$, where the
  black triangles adjacent to $w$ are denoted by $1$,...,$3$, see
  Figure~\ref{fig:bianchi}.
  \begin{figure}[hbt]
    \centering \resizebox{3cm}{2.8cm}{\includegraphics{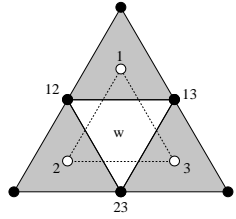}}
    \caption{A white triangle with adjacent vertices and black triangles.}
    \label{fig:bianchi}
  \end{figure}

  Existence and uniqueness of $\chi_w$ follows from the fact that for $i$,
  $j\in \{1,2,3\}$, $i\neq j$
  \begin{equation}
    \hat \psi^\flat_{i} + \hat \psi^\sharp_{i}
    \chi_{ij} = \hat \psi^\flat_{j} + \hat \psi^\sharp_{j} \chi_{ij} \tag{$*$}
  \end{equation}
  is equivalent to
  \[ \hat \psi^\flat_{i} - \hat \psi^\flat_{j} = (\hat \psi^\sharp_{j}-\hat
  \psi^\sharp_{i}) \chi_{ij} \tag{$**$}. \] Because $L^\sharp$ is immersed and
  both sides of $(**)$ are contained in the line $L_{v_{ij}}\subset \H^2$,
  where $v_{ij}$ denotes the vertex of $M$ between the black triangles $i$ and
  $j$ (see Figure~\ref{fig:bianchi}), there is a unique $\chi_{ij}\in \H$ such
  that $(*)$ holds.  To see that $\chi_{ij}$ coincides for all $i$, $j\in
  \{1,2,3\}$, $i\neq j$, note that, since $\hat\psi^\flat$ and $\hat
  \psi^\sharp$ are prolongations of holomorphic sections with monodromy
  of~$V/L$,
  \[ \hat \psi^\flat_{i} + \hat \psi^\sharp_{i} \chi_{12} \equiv \hat
  \psi^\flat_{3} + \hat \psi^\sharp_{3} \chi_{12} \quad \textrm{ mod } \quad
  L_{i3}
  \]
  for $i=1$ and $i=2$.  Because $L_{13}\oplus L_{23}=\H^2$ and $\hat
  \psi^\flat_{1} + \hat \psi^\sharp_{1} \chi_{12}= \hat \psi^\flat_{2} + \hat
  \psi^\sharp_{2} \chi_{12}$ we indeed have $\chi_{i3}=\chi_{12}$ for $i=1$, $2$
  such that $\chi_{ij}$ depends only on the white triangle $w$.  It therefore
  makes sense to defines $\chi_{w}:=\chi_{ij}$ such that \eqref{eq:bianchi}
  becomes
  \[ \hat \psi_w = \hat \psi^\flat_{1} + \hat \psi^\sharp_{1} \chi_{w} = \hat
  \psi^\flat_{2} + \hat \psi^\sharp_{2} \chi_{w} = \hat \psi^\flat_{3} + \hat
  \psi^\sharp_{3} \chi_{w}
  \]
  and in particular yields a well defined section $\hat \varphi$ of the
  trivial $\H^2$--bundle over $M''$ which, by $(**)$, has the same monodromy
  as $\hat \psi^\flat$. By construction, $\hat \varphi$ is the prolongation of
  a holomorphic section $\varphi$ of $V/L^\sharp$ which is nowhere vanishing,
  because $L^\sharp \oplus L^\flat$. In particular, $\hat \varphi$ is nowhere
  vanishing and $\tilde L=\hat \varphi \H$ defines a Darboux transform of
  $f^\sharp$.

  Since $f^\flat$ is immersed, equation $(**)$ implies that the function
  $\chi$ is nowhere vanishing and therefore $\varphi \chi^{-1}= \hat
  \psi^\flat \chi^{-1} + \hat \psi^\sharp$ is also a well defined section of
  the trivial $\H^2$--bundle over~$M''$. It has the same monodromy as $\hat
  \psi^\sharp$ and is the prolongation of a nowhere vanishing holomorphic
  section of $V/L^\flat$.  This shows that $\tilde L$ is also a Darboux
  transform of $f^\flat$.
\end{proof}

Formula \eqref{eq:bianchi} in the proof of the preceding theorem does also
prove the following lemma, because for defining $\chi$ via $(**)$ it is
sufficient that $\hat \psi^\sharp$ corresponds to an immersed Darboux
transform and $\hat \psi^\flat$ is the prolongation of a holomorphic section
with monodromy.

\begin{lemma}\label{lemma:bianchi}
  Let $f\colon M \rightarrow S^4$ be an immersion of a discrete surface $M$
  with regular combinatorics and $f^\sharp\colon M' \rightarrow S^4$ an
  immersed Darboux transform of $f$. For every holomorphic sections
  $\psi^\flat$ with monodromy $h^\flat$ of $V/L$, there is a holomorphic
  section $\varphi$ with the same monodromy $h^\flat$ of $V/L^\sharp$.
\end{lemma}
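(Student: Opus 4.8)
The plan is to recycle the construction from the proof of Theorem~\ref{th:bianchi}, observing that the only ingredients actually used to produce the function $\chi$ via equation $(**)$ are that $\hat\psi^\sharp$ is the prolongation of a nowhere vanishing holomorphic section (so that $L^\sharp$ is immersed) and that $\hat\psi^\flat$ is the prolongation of \emph{some} holomorphic section with monodromy of $V/L$. Crucially, $\psi^\flat$ is \emph{not} required to be nowhere vanishing, so the resulting section $\varphi$ of $V/L^\sharp$ will in general have zeros exactly where $\psi^\flat$ does; this is precisely the weakening of Theorem~\ref{th:bianchi} that the lemma records.

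First I would invoke Proposition~\ref{prop:global_DT_sections_with_monodromy} to write the immersed Darboux transform $f^\sharp$ as $L^\sharp=\hat\psi^\sharp\H$, where $\hat\psi^\sharp$ is the prolongation of a nowhere vanishing holomorphic section $\psi^\sharp$ with monodromy of $V/L$; here $\hat\psi^\sharp$ is a section of the trivial $\H^2$--bundle over the vertices $\mathcal{B}$ of $M'$. Next I would take the prolongation $\hat\psi^\flat$ of the given section $\psi^\flat$, again a section over $\mathcal{B}$, now carrying the monodromy $h^\flat$.

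The core step is to define the quaternionic function $\chi$ on the white triangles of the universal covering exactly as in Theorem~\ref{th:bianchi}. For a white triangle $w$ with adjacent black triangles labeled $1,2,3$ and separating vertices $v_{ij}$, the prolongation property gives $\hat\psi^\flat_i-\hat\psi^\flat_j\in L_{v_{ij}}$ and $\hat\psi^\sharp_j-\hat\psi^\sharp_i\in L_{v_{ij}}$; since $L^\sharp$ is immersed, $\hat\psi^\sharp_j-\hat\psi^\sharp_i\neq 0$ and hence spans the quaternionic line $L_{v_{ij}}$, so $(**)$ has a unique solution $\chi_{ij}\in\H$. The consistency $\chi_{13}=\chi_{23}=\chi_{12}$, and thus the existence of a single $\chi_w$, follows verbatim from the splitting $L_{13}\oplus L_{23}=\H^2$ used in the theorem. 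None of these arguments uses non--vanishing of $\psi^\flat$.

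Finally I would set $\hat\varphi=\hat\psi^\flat+\hat\psi^\sharp\chi$ and verify that it is the prolongation of the desired section. Projecting modulo $L^\sharp_{b}=\hat\psi^\sharp_b\H$ annihilates the second term, so $\pi\hat\varphi=\pi\hat\psi^\flat$ defines a section $\varphi$ of $V/L^\sharp$ over $\mathcal{B}$ whose restriction to each black triangle of $M'$ is the projection of the single vector $\hat\varphi_w\in\H^2$; by the definition of the induced holomorphic structure this means $\varphi$ is holomorphic. Applying a deck transformation $\gamma$ to $(**)$ and cancelling $\hat\psi^\sharp_j-\hat\psi^\sharp_i\neq 0$ yields $\gamma^*\chi=(h^\sharp_\gamma)^{-1}\chi\,h^\flat_\gamma$, whence $\hat\varphi$, and therefore $\varphi$, carries the monodromy $h^\flat$. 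Since there is no genuine obstacle here, the one point demanding care is exactly the bookkeeping just flagged: confirming that immersedness of $L^\sharp$ alone suffices to solve $(**)$ and that the monodromy computation survives a possibly vanishing $\psi^\flat$.
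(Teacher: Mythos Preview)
Your proposal is correct and follows precisely the approach the paper indicates: the paper's entire proof is the one--sentence remark preceding the lemma, namely that formula~\eqref{eq:bianchi} from the proof of Theorem~\ref{th:bianchi} already establishes the lemma because defining $\chi$ via $(**)$ only requires $\hat\psi^\sharp$ to come from an immersed Darboux transform while $\hat\psi^\flat$ merely needs to be the prolongation of a holomorphic section with monodromy. You have spelled out exactly this observation with the appropriate bookkeeping on $\chi$ and the monodromy.
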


Together with Corollary~\ref{cor:number_of_infinities} this implies the
invariance of the spectral curve under Darboux transformations.

\begin{theorem}
  The spectral curve $\Sigma$ of an immersion $f\colon M \rightarrow S^4$ of a
  discrete torus $M$ with regular combinatorics that satisfies
  \eqref{eq:genericity_assumption} is preserved under Darboux transforms.
\end{theorem}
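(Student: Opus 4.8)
The plan is to prove the equality of spectra $\Spec(V/L)=\Spec(V/L^\sharp)\subset \Hom(\Gamma,\C_*)$, which identifies the two spectral curves because each is defined as the normalization of its spectrum and $\Gamma$ is common to $M$ and $M'$ (they share the underlying torus). So let $f^\sharp\colon M'\rightarrow S^4$ be an immersed Darboux transform of $f$, with associated quaternionic holomorphic line bundle $V/L^\sharp$ over $M'$, a discrete torus with regular combinatorics. By Lemma~\ref{lem:one_dim_analytic} both $\Spec(V/L)$ and $\Spec(V/L^\sharp)$ are one--dimensional algebraic subsets.

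First I would record the containment. By Lemma~\ref{lemma:bianchi}, for every holomorphic section $\psi^\flat$ with monodromy $h^\flat$ of $V/L$ there is a holomorphic section $\varphi$ of $V/L^\sharp$ with the \emph{same} monodromy $h^\flat$; hence every $h\in\Spec(V/L)$ lies in $\Spec(V/L^\sharp)$, i.e.\ $\Spec(V/L)\subseteq\Spec(V/L^\sharp)$. Since a pure one--dimensional algebraic set contained in another is a union of some of its irreducible components, on the level of normalizations $\Sigma$ is a union of connected components of $\Sigma^\sharp$, and the task reduces to ruling out the presence of extra components.

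The upgrade to equality comes from counting ends. By Corollary~\ref{cor:number_of_infinities}, the genericity assumption \eqref{eq:genericity_assumption} for $V/L$ gives that $\Sigma$ has exactly $N:=2(m_1+m_2+m_3)$ distinct ends, where $m_i$ is the thickness of $M$ in direction $i$. On the other hand, summing Lemma~\ref{lem:number_points} over the three directions (which uses only regular combinatorics, not genericity) shows that the total number of ends of $\Sigma^\sharp$, counted with multiplicity, is $2(m_1'+m_2'+m_3')$, with $m_i'$ the thickness of $M'$. The key combinatorial input is $m_i'=m_i$: the vertex set of $M'$ consists of the centers of the black triangles of $M$, which form a translate of the vertex lattice of $M$, so $M'$ shares with $M$ both the lattice $\Gamma$ and the three edge directions; thus the lengths $n_i$ agree, and since $|\mathcal{B}|=|\mathcal{V}|$ by \eqref{eq:torus_vbw} the thicknesses $m_i=|\mathcal{V}|/n_i$ agree too. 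Hence $\Sigma$ and $\Sigma^\sharp$ have the same (weighted) end count $N$. Now combine: the $N$ ends of $\Sigma\subseteq\Sigma^\sharp$ are $N$ \emph{distinct} points of $\bar\Sigma^\sharp$, each contributing multiplicity at least one to the weighted count. If $\Sigma^\sharp$ had a component not belonging to $\Sigma$, then---since $\mu$ is a nonconstant branched covering on every component (Lemma~\ref{lem:one_dim_analytic}, cf.\ Lemma~\ref{lem:infinity}), so every component carries at least one end of every type---that extra component would contribute at least one further end, a point of $\bar\Sigma^\sharp$ disjoint from the previous $N$. This would force the weighted end count of $\Sigma^\sharp$ to exceed $N$, contradicting the equality of weighted counts. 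Therefore no component is missing and $\Spec(V/L)=\Spec(V/L^\sharp)$, so the spectral curve is preserved.

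The main obstacle I expect is the bookkeeping of ends ``with multiplicity'' on $\Sigma^\sharp$ \emph{without} assuming that $V/L^\sharp$ satisfies \eqref{eq:genericity_assumption}: one must use the genericity of $V/L$ only to guarantee that the $N$ ends inherited from $\Sigma$ are genuinely distinct simple points, while relying on the multiplicity--robust count of Lemma~\ref{lem:number_points} and on the fact that each component of $\bar\Sigma^\sharp$ has an end in each direction for the curve $\Sigma^\sharp$. The only other step needing care is the purely combinatorial identity $m_i'=m_i$, i.e.\ that the three generations $M,M',M''$ of the regular triangulation share lattice, directions, and hence thickness data.
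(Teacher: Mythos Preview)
Your proposal is correct and follows essentially the same approach as the paper: the paper's proof is the single sentence ``Together with Corollary~\ref{cor:number_of_infinities} this implies the invariance of the spectral curve under Darboux transformations,'' i.e., Lemma~\ref{lemma:bianchi} for the inclusion $\Spec(V/L)\subseteq\Spec(V/L^\sharp)$ combined with the end count to rule out extra components. You have spelled out the details the paper leaves implicit, including the combinatorial identity $m_i'=m_i$ and the care needed when $V/L^\sharp$ is not assumed to satisfy \eqref{eq:genericity_assumption}; your use of the multiplicity--robust Lemma~\ref{lem:number_points} in place of Corollary~\ref{cor:number_of_infinities} for $\Sigma^\sharp$ is exactly the right way to close that gap.
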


\section{Polygons as thin cylinders}\label{sec:curves}
We show now that a discrete curve in the conformal 4--sphere $S^4$, provided
it is a polygon in the sense that the images of every three consecutive points
are mutually disjoint, can be treated as an immersion of a discrete surface of
special type called a thin cylinder. In particular, a closed polygon can be
treated as an immersion of a thin torus, a special type of discrete torus.  We
introduce a Darboux transformation for immersed curves in~$S^4$. In case of
polygons this transformation coincides with the Darboux transformation of
Section~\ref{sec:darboux} applied to the corresponding thin cylinders in
$S^4$.

\subsection{A Darboux transformation for discrete curves in $S^4$}
We generalize the Darboux transformation for curves in $S^2=\CP^1$
\cite{BP96,BP99,HMNP01,Pi02} to curves in $S^4=\HP^1$.  As a special case this
includes, up to translations of $\R^3=\Im \H$ by adding real numbers, the
doubly discrete smoke ring flow introduced by Hoffmann \cite{Ho1,Ho2} for arc
length parametrized curves in $\R^3=\Im \H$ and generalized in \cite{PSW} to
arbitrary curves in $\R^3$.

By \emph{discrete curve} we mean a map $\gamma \colon I\cap \Z \rightarrow
S^4$ defined on the intersection of some interval $I\subset \R$ with $\Z$. In
order to simplify notation, we adopt the convention to drop indices and denote
the points $\gamma_n$ on a curve and their successors $\gamma_{n+1}$,
$\gamma_{n+2}$,... simply by $\gamma$ and $\gamma_+$, $\gamma_{++}$,...  For
example, we call a discrete curve \emph{immersed} if $\gamma\neq \gamma_+$ at
all points and we call it a \emph{polygon} if in addition $\gamma\neq
\gamma_{++}$.

An immersed curve $\eta$ in $S^2=\CP^1$ is a \emph{Darboux transform} of an
immersed curve $\gamma$ if all quadrilaterals spanned by edges of the curve
$\gamma$ and the corresponding edges of the transformed curve $\eta$ have the
same cross--ratio, see Figure~\ref{fig:curve_DT}, that is, if
\begin{figure}[hbt]
  \centering
  \resizebox{7cm}{!}{\input{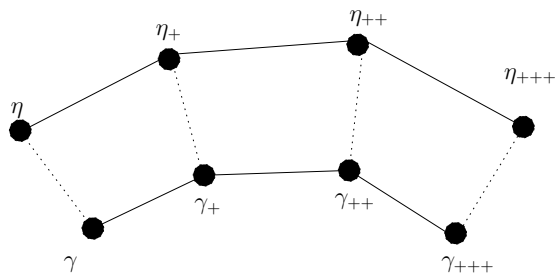}}
  \caption{A Darboux transform of a discrete curve.}
  \label{fig:curve_DT}
\end{figure}
\begin{equation}
  \label{eq:cross_ratio}
  M_4(\gamma,\eta_+,\gamma_+,\eta)=\lambda 
\end{equation}
for some constant $\lambda\in \C_*$, where $M_4$ denotes the cross--ratio
\[M_4(z_1,z_2,z_3,z_4) = (z_1-z_2)(z_2-z_3)^{-1}(z_3-z_4)(z_4-z_1)^{-1}.\]
Note that the cross--ration $M_4(z_1,z_2,z_3,z_4)$ is the image of $z_4$ under
the unique projective transformation $z\mapsto M_4(z_1,z_2,z_3,z)$ that maps
$z_1$, $z_2$ and $z_3$ to the points $\infty$, $1$, $0$.

The Darboux transform $\eta$ is uniquely determined by the cross ratio
$\lambda\in \C_*$ together with an initial value, say $\eta_0$: the other
points of $\eta$ are determined by the recursion formula
\begin{equation}
  \label{eq:recursion_curve_dt}
  \eta_+ = (P+\lambda\, Q) \eta,
\end{equation}
where $P$ and $Q$ denote the projections from $\C^2$ to the summands of the
splitting given by $\gamma$ and $\gamma_+$. To check this, assume that
$\gamma$, $\gamma_+$ and $\eta$ have homogeneous coordinates $[1,0]$, $[0,1]$
and $[\lambda,1]$. Then
\[ P+\lambda Q =
\begin{pmatrix}
  1 & 0 \\ 0 & \lambda 
\end{pmatrix} \] and the point $\eta_+$ given by \eqref{eq:recursion_curve_dt}
has homogeneous coordinates $[\lambda,\lambda]=[1,1]$ such that the
cross--ration condition \eqref{eq:cross_ratio} is indeed satisfied.

For generalizing this cross--ration evolution to curves in $S^4=\HP^1$ one has
to deal with the problem that prescribing 3 points in $S^4$ and a non--real
cross--ratio does only determine a unique forth point if one additionally
prescribes an oriented 2--sphere. This problem can be overcome by adapting the
recursion formula \eqref{eq:recursion_curve_dt} to the quaternionic setting:
in order to allow the use of complex spectral parameters $\lambda\in \C_*$ we
view $\H^2$ as the 4--dimensional complex vector space $(\H^2,\i)$ with right
multiplication by complex numbers $\C=\Span_\R\{1,\i\}\subset \H$.  Then
\eqref{eq:recursion_curve_dt} can be seen as a recursion formula for
homogeneous lifts $\hat \eta$ of the Darboux transform $\eta$:
\begin{definition}
  A curve $\eta$ in $S^4=\HP^1$ is a \emph{Darboux transform} of a curve
  $\gamma$ in $S^4=\HP^1$ if there is $\lambda\in \C_*$ and a homogeneous lift
  $\hat \eta$ of $\eta$ to $\H^2$ that satisfies
\begin{equation}
  \label{eq:recursion_curve_dt_quat}
  \hat \eta_+ = (P+\lambda\, Q) \hat \eta,
\end{equation}  
where $P$ and $Q$ denote the (quaternionic linear) projections operators from
$\H^2$ to the summands of the splitting given by $\gamma$ and $\gamma_+$ and
where $\lambda$ stands for the complex (but not quaternionic) linear operator
obtained by multiplication with $\lambda\in \C_*$ on the space $(\H^2,\i)$, in
other words $\hat \eta_+=P\hat \eta + Q\hat \eta \lambda$.
\end{definition}

Because \eqref{eq:recursion_curve_dt_quat} is not a quaternionic linear
equation, the initial value that determines a Darboux transform is not the
point $\eta=\hat \eta \H\in \HP^1$, but a ``twistor lift'' $\hat
\eta\C\in\CP^3$.  This twistor lift $\hat \eta\in\CP^3$ has the following
geometric interpretation: there is a unique oriented 2--sphere through
$\gamma$, $\gamma_+$ and $\eta$ obtained as the image of the complex line
$\hat \gamma\C\oplus\hat \gamma_+\C$ under the twistor projection $\pi\colon
\CP^3\rightarrow \HP^1$, where $\hat \gamma$ and $\hat \gamma_+$ are
homogeneous lifts of $\gamma$ and $\gamma_+$ with $\hat \eta = \hat \gamma +
\hat \gamma_+ $. Obviously, the point $\eta_+$ obtained by
\eqref{eq:recursion_curve_dt_quat} is also contained in this 2--sphere such
that the cross--ration of the four points is a well defined complex number
and, by the above argument for the complex case, indeed $\eta_+$ is the unique
forth point on that oriented 2--sphere with
$M_4(\gamma,\eta_+,\gamma_+,\eta)=\lambda$.  The recursion formula
\eqref{eq:recursion_curve_dt_quat} defines not only $\eta_+$, but a twistor
lift $\hat\eta_+$ and hence a new oriented 2--sphere through $\gamma_+$,
$\gamma_{++}$ and $\eta_+$, a new point $\eta_{++}$ on that 2--sphere and so
forth... In addition to the Darboux transform $\eta$ our construction yields a
congruence of oriented 2--spheres along the edges of the curves.  The
intersection of two consecutive 2--spheres describes the initial curve
$\gamma$ and the Darboux transform $\eta$.

In case that $\gamma\colon \Z\rightarrow S^4$ is a \emph{closed} curve with
period $n$ one is mainly interested in closed Darboux transformations. These
are obtained by taking as initial conditions for Darboux transforms with
parameter $\lambda\in \C_*$ the eigenlines of the holonomy
\begin{equation}
  \label{eq:holonomy_curves}
  H(\lambda) = (P_{n-1}+\lambda\, Q_{n-1})\cdot(P_{n-2}+\lambda\,
  Q_{n-2})\cdot...\cdot(P_1+\lambda \, Q_1) \cdot(P_0 +\lambda\,  Q_0).
\end{equation}

A natural thing to do when coming across such polynomial family $\lambda
\mapsto H(\lambda)$ of endomorphisms is to study its eigenspaces and their
dependence on $\lambda$. The vanishing locus $\det(\mu-H(\lambda))=0$ of the
characteristic polynomial is an algebraic curve and the normalization of
$\{(\mu,\lambda)\in \C_* \times \C_* \mid \det(\mu-H(\lambda))=0\}$ is the
\emph{spectral curve} $\Sigma$ of the closed immersed discrete curve $\gamma$.
Proposition~\ref{prop:fredholm} implies that the eigenspaces of $H(\lambda)$
extend to  holomorphic vector bundles over the components of the Riemann
surface $\Sigma$.

For a generic closed curve in $S^4$ on expects the holonomies $H(\lambda)$ to
have simple eigenvalues for all but finitely many parameters $\lambda$.  The
eigenspaces then extend to an ``eigenline bundle'' over $\Sigma$ and the
spectral curve parametrizes the closed Darboux transforms of $\gamma$. In
particular, the spectral curve $\Sigma$ is then a 4--fold branched covering of
the $\lambda$--plane.

As in the case of discrete tori in $S^4$, the spectral curve of a closed curve
in $S^4$ has an anti--holomorphic involution $\rho\colon \Sigma \rightarrow
\Sigma$. It covers $\lambda \mapsto \bar \lambda$ and is induced by $H(\bar
\lambda)= j^{-1}H(\lambda)j$, where $j$ denotes the complex anti--linear
operator given by right multiplication with the quaternion $j$. 

\begin{theorem} \label{th:curve_spec} Let $\gamma$ be a closed polygon in
  $S^4$ for which the holonomy $H(\lambda)$ generically has four simple
  eigenvalues. The spectral curve of $\gamma$ has three pairs $\infty_+$,
  $\rho\infty_+$, $\infty_0$, $\rho\infty_0$, $\infty_-$, and $\rho \infty_-$
  of points at infinity such that, for each point of the curve, the twistor
  projection of the eigenline curve evaluated at the respective point at
  infinity is $\gamma_+$, $\gamma$ and~$\gamma_-$.
\end{theorem}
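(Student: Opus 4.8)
The plan is to obtain the three pairs of points at infinity by analysing how the holonomy family $H(\lambda)$ degenerates at the two ends $\lambda=0$ and $\lambda=\infty$ of the $\lambda$--plane, and, for the delicate end, to identify the closed polygon $\gamma$ with an immersion of a \emph{thin} discrete torus so that the counting and the limiting values are controlled by the spectral--curve theory of Section~\ref{sec:spectral}.

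First I would record the formal structure of $H(\lambda)$ from \eqref{eq:holonomy_curves}. In the splitting $\gamma_i\oplus\gamma_{i+1}$ each factor is $P_i+\lambda Q_i=\mathrm{diag}(1,\lambda)$, so $\det H(\lambda)=\lambda^{2n}$; hence $H(\lambda)$ is invertible for $\lambda\in\C_*$ and all ends $\bar\Sigma\setminus\Sigma$ lie over $\lambda\in\{0,\infty\}$. Moreover $H(\lambda)$ is a polynomial family of $\C$--linear automorphisms of $(\H^2,\i)=\C^4$, and since each $P_i,Q_i$ is quaternionic we have $H(\bar\lambda)=j^{-1}H(\lambda)j$, which is the anti--holomorphic involution $\rho$ covering $\lambda\mapsto\bar\lambda$. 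As $\rho$ carries the $\mu$--eigenline $\ell$ of $H(\lambda)$ to the $\bar\mu$--eigenline $j\ell$ of $H(\bar\lambda)$, and $\ell$, $j\ell$ span the same quaternionic line, $\rho$ pairs the ends into $\rho$--conjugate pairs with a single twistor projection each; since $j\ell\neq\ell$ for a complex line, $\rho$ is fixed point free. The elementary fact used throughout is that the twistor projection $\pi\colon\CP^3\to\HP^1$ of a complex line $\ell$ contained in a quaternionic line $\gamma_\bullet\subset\H^2$ is the point $\gamma_\bullet$ itself.

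The end $\lambda=0$ is then transparent: $H(0)=P_{n-1}\cdots P_0$ is quaternionic linear of complex rank $2$, with $\im H(0)=\gamma_{n-1}=\gamma_-$ and, using the polygon hypothesis $\gamma_{i-1}\neq\gamma_{i+1}$, with $\ker H(0)=\gamma_1=\gamma_+$, these two quaternionic lines being transverse. Among the four sheets over $\lambda=0$, the two whose eigenvalue tends to a value in $\C_*$ carry eigenlines converging into $\im H(0)=\gamma_-$, while the two whose eigenvalue tends to $0$ carry eigenlines converging into $\ker H(0)=\gamma_+$ (if $H(\lambda)v_\lambda=\mu_\lambda v_\lambda$ with $\mu_\lambda\to0$ and $v_\lambda\to v_0$, then $H(0)v_0=0$). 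By the projection fact these yield one $\rho$--pair $\infty_-,\rho\infty_-$ with value $\gamma_-$ and one pair $\infty_+,\rho\infty_+$ with value $\gamma_+$. The remaining pair $\infty_0,\rho\infty_0$ must therefore come from $\lambda=\infty$ and carry value $\gamma=\gamma_0$.

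The main obstacle is exactly this end $\lambda=\infty$: here the naive leading coefficient $\prod Q_i$ vanishes identically (because $Q_{i+1}Q_i=0$), so the limit is genuinely degenerate and a direct asymptotic analysis of $H(\lambda)$ is awkward, and it is here that one must rule out spurious ends and confirm that a single $\rho$--pair survives with projection $\gamma$. I would resolve this by the thin--cylinder reduction: the closed polygon $\gamma$ is an immersion $f$ of a thin discrete torus with regular combinatorics and thickness one in each of its three directions, and the holonomy \eqref{eq:holonomy_curves} can be identified with the transfer operator of that torus, so that $\Sigma$ and its eigenline curve coincide with the spectral curve and the holomorphic line bundle $\mathcal{L}\to\Sigma$ of Theorem~\ref{th:spectral_curve_f}. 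Corollary~\ref{cor:number_of_infinities} then produces exactly $2(1+1+1)=6$ ends in three $\rho$--pairs, one per direction and present on every component, while the asymptotic description \eqref{eq:f_from_F} identifies, for each direction, the twistor projection $F=\pi\circ\hat F$ of the prolonged eigenline at the corresponding pair of ends with the value of $f$ at the upper vertex of that direction's row of black triangles. For the thin cylinder these three upper vertices are $\gamma_-$, $\gamma$ and $\gamma_+$, which matches the two pairs found directly at $\lambda=0$ and supplies the third pair at $\lambda=\infty$ with value $\gamma$. Verifying this dictionary—matching the concrete operator $H(\lambda)$ with the torus transfer operator and checking the direction--to--vertex bookkeeping—is the step that requires the most care.
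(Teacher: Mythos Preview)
Your analysis at $\lambda=0$ is correct and essentially coincides with the paper's: $H(0)=P_{n-1}\cdots P_0$ has image $L_{n-1}=\gamma_-$ and kernel $L_1=\gamma_+$, yielding the two $\rho$--pairs $\infty_\pm$ with the stated twistor values. The divergence from the paper is entirely at $\lambda=\infty$, and here your proposal has a real gap.

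The paper does \emph{not} outsource the $\lambda=\infty$ end to the torus theory of Section~\ref{sec:spectral}. Instead it computes the highest nonvanishing coefficient $H_{max}$ of the polynomial $H(\lambda)$ directly, using the identities $Q_{l+1}Q_l=0$ and $P_{l+1}Q_l=Q_l$, and treating the parities of $n$ separately. For $n$ even, $H_{max}$ (of degree $n/2$) is invertible and leaves $L_0$ invariant, so two of its eigenlines sit inside $L_0$ and project to $\gamma$; for $n$ odd, $H_{max}$ (of degree $(n+1)/2$) is nilpotent with image contained in $L_0$, again forcing the limiting eigenlines into $L_0$. Either way one reads off the pair $\infty_0,\rho\infty_0$ with twistor value $\gamma$. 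So the ``awkward'' direct analysis you avoid is in fact a two--line computation once one writes down $H_{max}$.

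Your alternative route through Corollary~\ref{cor:number_of_infinities} and Theorem~\ref{th:spectral_curve_f} has two problems. First, both results require the genericity assumption~\eqref{eq:genericity_assumption}, which you never verify for the thin torus induced by $\gamma$; the hypothesis of Theorem~\ref{th:curve_spec} (generic simple eigenvalues of $H(\lambda)$) is a different condition and does not obviously imply it. Second, and more concretely, your assertion that the thin torus has ``thickness one in each of its three directions'' is not established and is in fact false for even $n$: writing the lattice as $\langle (n,0),(1,1)\rangle$ in standard triangular coordinates, the smallest lattice vector in the third edge direction $(1,-1)$ has length $n/2$, so that thickness is $2$, and Corollary~\ref{cor:number_of_infinities} would predict $2(1+1+2)=8$ ends, not $6$. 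This is consistent with the paper's even--$n$ analysis, where $H_{max}$ is invertible and only two of its four eigenlines lie in $L_0$; the theorem asserts the existence of the three named pairs, not that these exhaust $\bar\Sigma\setminus\Sigma$ (cf.\ the remark following the proof, which singles out the odd case as the one with exactly six ends). Your counting argument ``the remaining pair must come from $\lambda=\infty$'' therefore breaks down for even $n$.
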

\begin{proof}
  For investigating the asymptotics of the spectral curve $\Sigma$ it is
  sufficient to study the coefficients with lowest and highest order of the
  polynomial $H(\lambda)$.  The lowest order coefficient of $H(\lambda)$ is
  \[ H(0) = P_{n-1}\cdot P_{n-2}\cdot\cdot\cdot P_1\cdot P_0. \] The subspaces
  $L_{1}$ and $L_{n-1}$ are invariant under $H(0)$ which vanishes on $L_1$ and
  acts non--trivial on $L_{n-1}$, because $\gamma$ is assumed to by a polygon.

  For curves of even length, the highest order coefficient is
  \begin{align*}
    H_{max} & = Q_{n-1}\cdot P_{n-2}\cdot\cdot\cdot P_2\cdot Q_1\cdot P_0 +
    P_{n-1}\cdot Q_{n-2}\cdot\cdot\cdot Q_2 \cdot P_1\cdot Q_0
    = \\
    & = Q_{n-1}\cdot Q_{n-3}\cdot\cdot\cdot Q_3\cdot Q_1\cdot P_0 +
    Q_{n-2}\cdot Q_{n-4} \cdot\cdot\cdot Q_2\cdot Q_0,
  \end{align*}
  because $Q_{l+1}\cdot Q_l=0$ and $P_{l+1}\cdot Q_{l}=Q_l$.  The endomorphism
  $H_{max}$ leaves $L_0$ invariant and maps $L_1$ to $L_{n-1}$.  Because
  $\gamma$ is a polygon, its restriction to both summands $L_0$ and $L_1$ is
  non--trivial and $H_{max}$ is invertible.

  For curves of odd length, the highest order term is
  \[ H_{max} = Q_{n-1}\cdot Q_{n-2} \cdot \cdot\cdot Q_2 \cdot Q_0. \] It
  vanishes on $L_0$ and its restriction to $L_1$ is non--trivial (because
  $\gamma$ is a polygon) and maps $L_1$ to $L_0$. In particular, $H_{max}$ is
  nil--potent.
\end{proof}

For curves of odd length, the last part of the proof suggests that the
spectral curve has two branch points, $\infty_0$ and $\rho\infty_0$, over
$\lambda=\infty$ such that there are no other points at infinity apart from
those described in Theorem~\ref{th:curve_spec}.

\begin{rem}\label{rem:spec_s2}
  Analogous to Lemma~\ref{lem:two--sphere}, the spectral curve of a closed,
  immersed discrete curve in $S^2=\CP^1$, when seen as a curve in $S^4$,
  consists of two copies of the $\CP^1$--spectral curve which are interchanged
  under $\rho$.  For polygons in $S^2=\CP^1$, as in the proof of
  Theorem~\ref{th:curve_spec} the asymptotics at $\lambda=0$ shows that the
  holonomy, which in this case reduces to a family of endomorphisms of a
  complex rank two vector space, generically has simple eigenvalues. In
  particular, a closed polygon $\gamma$ in $\CP^1$ always admits an eigenline
  bundle over its spectral curve which parametrizes the eigenlines of
  $H(\lambda)$ and hence closed Darboux transforms of $\gamma$.
\end{rem}

\subsection{Thin cylinders}
In order to interpret the parameter domain of a discrete curve as a \emph{thin
  cylinder}, a special type of discrete surface, we take one row of black and
white triangles in the regular triangulation of the plane and identify the
lower left point of each black triangle with the upper right point of the
white triangle right of it. Figure~\ref{fig:thin_torus} shows as an example
the fundamental domain of a \emph{thin torus} that is parameter domain for
closed curves of period $n$.

\begin{figure}[hbt]
  \centering
  \resizebox{9cm}{!}{\includegraphics{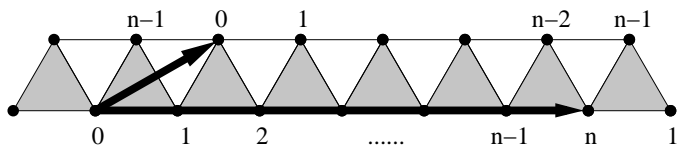}}
  \caption{A thin torus with $n$ points.}
  \label{fig:thin_torus}
\end{figure}

An immersion of a thin cylinder is an immersed discrete curve $\gamma$ which
is a polygon in the sense that every three consecutive points of $\gamma$ are
mutually different, that is, in addition to $\gamma\neq \gamma_+$ the curve
satisfies $\gamma\neq \gamma_{++}$ for all points.

If $M$ is a thin cylinder, then $M'$ and $M''$ introduced in
Section~\ref{sec:three_gen} are also thin cylinders.  The numbering of the
vertices as in Figure~\ref{fig:thin_torus} induces a numbering of the black
and white triangles (and hence of the vertices of $M'$ and $M''$): a black
triangle gets the same number than its lower left vertex while a white
triangle gets the same number than its upper right vertex.  With this
numbering convention, all maps from $M$, $M'$ and $M''$ to $S^4$ are curves
defined on the same parameter domain. We prove now that a map $f^\sharp\colon
M'\rightarrow S^4$ is the Darboux transform of an immersion $f\colon
M\rightarrow S^4$ of a thin cylinder $M$ if the curve corresponding to
$f^\sharp$ is a Darboux transform of the polygon corresponding to~$f$:

\begin{theorem}\label{th:curve_DT_as_cylinder_DT} 
  Let $\gamma\colon I\cap \Z\rightarrow S^4$ be a polygon in $S^4$. A discrete
  curve $\eta\colon I\cap \Z\rightarrow S^4$ is a Darboux transform of
  $\gamma$ if and only if the thin cylinder in~$S^4$ corresponding to $\eta$
  is a Darboux transform of the immersed thin cylinder corresponding to
  $\gamma$.  In particular, the spectral curve of a closed polygon $\gamma$ in
  $S^4$ coincides with the spectral curve of the corresponding thin torus in
  $S^4$.
\end{theorem}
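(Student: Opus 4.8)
The plan is to prove the two assertions in turn: the equivalence of the two Darboux transformations and then, as a consequence, the identification of the two spectral curves. First I would fix the combinatorics. Using the numbering of Figure~\ref{fig:thin_torus}, the vertices of the thin cylinder $M$ carry the values of $\gamma$, the black triangles of $M$ (the vertices of $M'$) carry the values of $\eta=f^\sharp$, and both are indexed over the common domain $I\cap\Z$. Working on the universal covering, where the thin cylinder becomes a width--one strip of the equilateral plane triangulation, consecutive black triangles share exactly one vertex, so that the prolongation connection \eqref{eq:connection_DT} on $L^\sharp$ is genuinely nontrivial. For each white triangle I would read off the hexagon of \eqref{eq:multiratio_DT}: its three vertices are consecutive points of $\gamma$ and its three neighbouring black triangles carry consecutive values of $\eta$. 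The point of the thin combinatorics is that this hexagon degenerates to the quadrilateral $\gamma,\eta,\gamma_+,\eta_+$ of Figure~\ref{fig:curve_DT}.

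For the equivalence itself I would lead with the parallel--section characterisation. By Theorem~\ref{th:local_DT}, $f^\sharp$ is a Darboux transform of the thin cylinder if and only if the connection \eqref{eq:connection_DT} on $L^\sharp$ is flat, equivalently $L^\sharp$ is locally spanned by the prolongation of a holomorphic section of $V/L$. I would show that, along the strip, the twistor lift $\hat\eta$ of $\eta$ satisfies the recursion \eqref{eq:recursion_curve_dt_quat}, $\hat\eta_+=(P+\lambda\,Q)\hat\eta$, with $P,Q$ the projections onto the splitting given by $\gamma,\gamma_+$ and $\lambda\in\C_*$ complex, exactly when \eqref{eq:connection_DT} is flat. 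Equivalently, a direct quaternionic computation shows that on the degenerate hexagon the multi--ratio identity $M_6=-1$ of \eqref{eq:multiratio_DT} is equivalent to the cross--ratio \eqref{eq:cross_ratio} $M_4(\gamma,\eta_+,\gamma_+,\eta)$ taking one and the same value $\lambda$ at every edge of the curve, i.e. to $\eta$ being a Darboux transform of $\gamma$ in the sense of \eqref{eq:cross_ratio}. This simultaneously identifies the spectral parameter $\lambda$ of the curve transform with the constant cross--ratio and yields both directions of the ``if and only if''.

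For the spectral curve of a closed polygon, both constructions produce a compact Riemann surface normalising an algebraic curve in $\C_*\times\C_*$: the curve spectral curve as the normalisation of $\det(\mu-H(\lambda))=0$ with the holonomy $H(\lambda)$ of \eqref{eq:holonomy_curves}, and the thin--torus spectral curve (Theorem~\ref{th:spectral_curve_W}) as the normalisation of $\det(\lambda-H(\mu))=0$ with the transfer operator $H(\mu)$ of Part~2 of its proof, where a thin torus has thickness $m=1$. The plan is to identify these two loci after interchanging the two coordinates: by the first part, a pair $(\mu,\lambda)$ lies in one spectrum if and only if there is a holomorphic section of $V/L$ with that monodromy, if and only if there is an eigenline of $H(\lambda)$ with eigenvalue $\mu$ spanning a closed Darboux transform. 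Hence the two polynomials cut out the same curve in $\C_*\times\C_*$, and their normalisations and anti--holomorphic involutions $\rho$ agree, which gives the ``in particular''.

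The hard part will be the algebraic reduction in the middle step. While substituting a curve Darboux transform into \eqref{eq:multiratio_DT} is routine, the converse---extracting from the per--white--triangle flatness the fact that the cross--ratio is the \emph{same} constant $\lambda$ at every edge---is the real content, and it is exactly here that the thin combinatorics and the factorisation $P+\lambda\,Q$ of the holonomy in \eqref{eq:holonomy_curves} are needed.
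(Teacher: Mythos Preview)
Your prolongation approach is the right one and is exactly what the paper does, but you are over--anticipating the difficulty. The ``hard part'' you flag---extracting a single constant $\lambda$ from the per--white--triangle flatness---does not exist in the paper's argument, because $\lambda$ is not extracted at all: it is \emph{defined} to be the vertical monodromy of the holomorphic section, i.e., the monodromy along the short generator of the thin cylinder that glues the lower--left vertex of each black triangle to the top vertex of the next. As the monodromy of a single section this is one global constant from the outset.

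With that identification the proof is two congruences. On the universal cover the black triangle one step up from the one carrying $\hat\psi$ carries $\hat\psi\lambda$. The top vertex of the black triangle carrying $\hat\psi_+$ is the point $\gamma$ shared with that upper triangle, so $\hat\psi_+\equiv\hat\psi\lambda\bmod L_{\gamma}$; the bottom--right vertex $\gamma_+$ is shared with the triangle carrying $\hat\psi$, so $\hat\psi_+\equiv\hat\psi\bmod L_{\gamma_+}$. Applying $Q$ to the first and $P$ to the second gives $\hat\psi_+=P\hat\psi+Q\hat\psi\lambda$, which is precisely \eqref{eq:recursion_curve_dt_quat}. The converse runs the same equivalence backwards. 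Your alternative route through the degenerate hexagon and $M_6=-1$ versus $M_4=\lambda$ is workable but strictly harder: on the thin cylinder the hexagon acquires repeated vertices from the gluing, and the bookkeeping is less transparent than the two congruences above.

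Your plan for the spectral statement is fine and is what the paper leaves implicit: once the first part identifies holomorphic sections of $V/L$ with monodromy $(\mu,\lambda)$ on the thin torus with $\mu$--eigenvectors of the curve holonomy $H(\lambda)$, the two spectra cut out the same locus in $\C_*\times\C_*$ and hence have the same normalisation.
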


\begin{proof}
  Let $\hat \psi$ be the prolongation of a holomorphic section with complex
  monodromy on the universal covering of a thin cylinder and let $\lambda\in
  \C_*$ be its ``vertical'' monodromy (i.e., the monodromy in direction of the
  lattice vector that identifies the lower left vertex of a black triangle
  with the upper right vertex of the white triangle right of it). As in
  Figure~\ref{fig:thin_cylinder}, we denote by $\hat \psi$, $\hat\psi_+$,...
  the section $\hat \psi$ on the black triangles with lower left corner
  $\gamma$, $\gamma_+$,... along one row of black triangles.

\begin{figure}[hbt]
  \centering
  \resizebox{3.5cm}{!}{\input{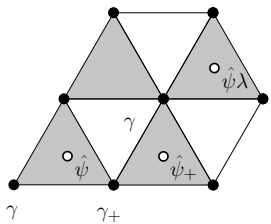}}
  \caption{Darboux transformation of a piece of a thin cylinder.}
  \label{fig:thin_cylinder}
\end{figure}

Because the projections of $\hat \psi_+$ and $\hat \psi \lambda$ to
$(V/L)_{|\gamma}$ coincide, we have
\[ \hat \psi_+ \equiv \hat \psi \lambda \mod L_{|\gamma} \tag{$*$}, \] and,
because the projections of $\hat \psi_+$ and $\hat \psi$ to
$(V/L)_{|\gamma_+}$ coincide, we have
\[ \hat \psi_+ \equiv \hat \psi \mod L_{|\gamma_+}. \tag{$**$} \] Together,
equations $(*)$ and $(**)$ are equivalent to
\[ \hat \psi_+ = P\hat \psi + Q\hat \psi \lambda\] with $P$ and $Q$ denoting
the quaternionic linear projections from $V$ to the summands of the splitting
induced by $\gamma$ and $\gamma_+$.

\end{proof}


\end{document}